\newcommand{\Z}{\mathbb{Z}}
\newcommand{\R}{\mathbb{R}}
\DeclareMathOperator{\conv}{conv}
\DeclareMathOperator{\cl}{cl}
\newcommand{\innerProd}[2]{\langle#1,#2\rangle}
\newcommand{\norm}[2]{\|#2\|_{#1}}
\newcommand{\trace}{\ensuremath{\text{Tr}}}
\newcommand{\diag}{\ensuremath{\text{diag}}}
\newcommand{\secref}[1]{ \S\ref{#1}}
\newtheorem{proposition}{Proposition}
\newtheorem{theorem}{Theorem}
\newtheorem{corollary}{Corollary}
\newtheorem{lemma}{Lemma}
\theoremstyle{definition}
\newtheorem{example}{Example}
\theoremstyle{remark}
\newtheorem{remark}{Remark}
\theoremstyle{definition}
\theoremstyle{remark}
\newcommand{\be}[1]{\begin{equation}\label{#1}}
\newcommand{\ee}{\end{equation}}
\newcommand{\sdpfree}{\ensuremath{\textsf{OptRankOne}}\xspace}
\newcommand{\sdppos}{\ensuremath{\textsf{OptPairs}}\xspace}
\newcommand{\sdpstandard}{\ensuremath{\textsf{Shor}}\xspace}
\newcommand{\optPersp}{\ensuremath{\textsf{OptPersp}}\xspace}
\newcommand{\conicPersp}{\ensuremath{\textsf{ConicQuadPersp}}\xspace}
\newcommand{\decomp}{\ensuremath{\textsf{Decomp}}\xspace}
\newcommand{\conicN}{\textsf{ConicQuadN}\xspace}
\newcommand{\conicP}{\textsf{ConicQuadP}\xspace}
\newcommand{\conic}{\textsf{ConicQuadP+N}\xspace}
\newcommand{\perspS}{\optPersp}
\newcommand{\conicS}{\sdppos}
\newcommand{\nodes}{\texttt{Nodes}\xspace}
\newcommand{\rimp}{\texttt{Rimp}\xspace}
\newcommand{\Time}{\texttt{Time}\xspace}
\newcommand{\egap}{\texttt{Egap}\xspace}
\newcommand{\igap}{\texttt{igap}\xspace}
\newcommand{\CQI}{\ensuremath{\text{(QI)}}\xspace}
\newcommand{\I}[1]{\ensuremath{\mathcal{I}_{#1}}}
\newcommand{\D}{\ensuremath{\mathcal{D}}}
\newcommand{\W}{\ensuremath{\mathcal{W}}\space}
\newcommand{\set}[1]{\ensuremath{\mathcal{#1}}}
\newcommand{\symM}{\mathbb{S}}
\newcolumntype{\resetRow}{>{\global\let\currentrowstyle\relax}}
\newcolumntype{^}{>{\currentrowstyle}}
\newcommand{\rowstyle}[1]{\gdef\currentrowstyle{#1}%
	#1\ignorespaces
}
\def\SingleSpacedXI{\linespread{1.1}}
\title[${2\times 2}$-convexifications for MICQP]{$\mathbf{2\times 2}$-convexifications for convex quadratic optimization with indicator variables}
\author{Shaoning Han, Andr\'{e}s G\'{o}mez and Alper Atamt\"urk}
\thanks{ \noindent \hskip -5mm
	S. Han, A. G\'{o}mez : Daniel J. Epstein Department of Industrial and Systems Engineering, Viterbi School of Engineering, University of Southern California, CA 90089. \texttt{gomezand@usc.edu}, \texttt{shaoning@usc.edu}\\
A. Atamt\"urk: Department of Industrial Engineering \& Operations Research, University of California, Berkeley, CA 94720.
\texttt{atamturk@berkeley.edu}  
}
\begin{document}
	\maketitle
	
	\begin{abstract}
		\vskip 3mm
		\noindent 
In this paper, we study the convex quadratic optimization problem with indicator variables. For the bivariate case, we describe the convex hull of the epigraph in the original space of variables, and also give a conic quadratic extended formulation. Then, using the convex hull description for the bivariate case as a building block, we derive an extended SDP relaxation for the general case. This new formulation is stronger than other SDP relaxations
proposed in the literature for the problem, including Shor's SDP relaxation, the optimal perspective relaxation as well as the optimal rank-one relaxation.
Computational experiments indicate that the proposed formulations are quite effective in reducing the integrality gap of the optimization problems. \\
		 
		 \noindent
		\textbf{Keywords}. Mixed-integer quadratic optimization, semidefinite programming, perspective formulation, indicator variables, convexification. \\
		% \PACS{PACS code1 \and PACS code2 \and more}
		% \subclass{MSC code1 \and MSC code2 \and more}
\end{abstract}

\begin{center}
	April 2020 \\
\end{center}

\BCOLReport{20.02}

\section{Introduction}

\label{sec:intro}

We consider the convex quadratic optimization with indicators: 
%\todo{We do not use the linear terms in the objective in the study. We may omit them.}
\begin{equation}\label{eq:mixed_integer_problem}
\CQI \	\ \ \min\left\{ a'x+b'y+y'Qy: (x,y) \in \I{n} \right\},
\end{equation} 
where the indicator set is defined as
\[
\I{n} = \left \{(x,y)\in\{0,1\}^n\times\R_+^n: y_i(1-x_i)=0,\; \forall i\in[n] \right  \},
\]
where $a$ and $b$ are $n$-dimensional vectors, $Q\in\R^{n\times n}$ is a positive semidefinite (PSD) matrix and 
$[n] := \{1,2, \ldots, n\}$. For each $i \in [n]$, the complementary constraint $y_i (1-x_i) = 0$, along with the indicator variable $x_i \in \{0,1\}$, is used to state that $y_i=0$ whenever $x_i=0$. 
Numerous applications, including portfolio optimization \cite{bienstock1996computational}, optimal control  \cite{gao2011cardinality}, image segmentation  \cite{hochbaum2001efficient}, signal denoising \cite{bach2019submodular}
are either formulated as \CQI or can be relaxed to \CQI. %\eqref{eq:mixed_integer_problem}.

%In particular, \eqref{eq:mixed_integer_problem} encompasses the following problem as a special case by introducing indicator variables to incorporate the so-called $l_0$ norm
%\[ \min_w \norm{2}{y-Xw}^2+\lambda\norm{0}{w}, \]
%which is known as a NP-hard problem \citep{huo2010complexity}. Hence, problem \eqref{eq:mixed_integer_problem} is NP-hard in general. 

%To solve \eqref{eq:mixed_integer_problem} efficiently, a pivotal step is to construct strong convex relaxations. To obtain strong relaxations, many researchers study the structure of epigraph of $y'Qy$ to add valid inequalities for \eqref{eq:mixed_integer_problem}. Both 

Building strong convex relaxations of \CQI % \eqref{eq:mixed_integer_problem} 
is instrumental in solving it effectively. A number of approaches for developing linear and nonlinear valid inequalities for \CQI %\eqref{eq:mixed_integer_problem} 
are considered in literature. 
%\subsection*{Extended formulation via linearization}
 \citet{dong2013} describe lifted linear inequalities %for \eqref{eq:mixed_integer_problem}
  from its continuous quadratic optimization counterpart with bounded variables. \citet{bienstock2014cutting} derive valid linear inequalities for optimization of a convex objective function over a non-convex set  based on gradients of the objective function. Valid linear inequalities for \CQI %\eqref{eq:mixed_integer_problem} 
  can also be obtained using the epigraph of bilinear terms in the objective \cite[e.g.][]{ boland2017bounding,dey2019new, gupte2020extended,conv-bivariate}. In addition, several specialized results concerning optimization problems with indicator variables exist in the literature \cite{atamturk2020safe,belotti2016handling,bertsimas2019unified,bonami2015mathematical,dedieu2020learning,gomez2019outlier,gomez2020strong,lim2018valid,mahajan2017minotaur}.
 
 %\subsection*{Standard SDP reformulation} 
 A powerful approach for nonconvex quadratic optimization problems is semidefinite programming (SDP) reformulation,
 %This approach is also known as Shor semidefinite relaxation because it is
 first proposed by Shor \cite{shor1987quadratic}. % in 1987. 
 Specifically, a convex relaxation is constructed by introducing a rank-one matrix $Z$ representing $zz'$, where $z$ is the decision vector, and then forming the semidefinite relaxation $Z \succeq zz'$. Such SDP relaxations have been widely utilized in numerous applications, including max-cut problems \cite{goemans1995improved}, hidden partition problems of finding clusters in large network datasets \cite{javanmard2016phase},  matrix completion problems \cite{alfakih1999solving,candes2010matrix}, power systems \cite{FALA:conic-uc}, 
 robust optimization problems \cite{ben2009robust}.
 Sufficient conditions for exactness of SDP relaxations are also studied in the literature \cite[e.g.][]{burer2019exact,ho2017second,jeyakumar2014trust, wang2019tightness,wang2019generalized}.
 
%\subsection*{Perspective reformulation} 
There is a substantial body of research on the perspective formulation of convex univariate functions with indicators \cite{akturk2009strong,dong2015regularization,dong2013,frangioni2006perspective, gunluk2010perspective,
	hijazi2012mixed,wu2017quadratic}. When $Q$ is diagonal, $y'Qy$ is separable and the perspective formulation provides the convex hull of the epigraph of $y'Qy$ with indicator variables by strengthening each term $Q_{ii}y_i^2$ with its perspective counterpart $Q_{ii}y_i^2/x_i$, individually. For the general case, however, convex relaxations based on the perspective reformulation may not be strong. The computational experiments in \cite{frangioni2020decompositions} demonstrate that as $Q$ deviates from a diagonal matrix, the performance of the perspective formulation deteriorates.

%\subsection*{Convexification for two-variable terms}
Beyond the perspective reformulation, which is based on the convex hull of the epigraph of a univariate convex quadratic function with one indicator variable, the convexification for the bivariate case has received attention recently. Convex hulls of univariate and bivariate cases can be used as building blocks to strengthen \CQI % \eqref{eq:mixed_integer_problem} 
by decomposing $y'Qy$ into a sequence of low-dimensional terms.  
\citet{jeon2017quadratic} give conic quadratic valid inequalities for the bivariate case.
 \citet{frangioni2020decompositions} combine perspective reformulation and disjunctive programming and apply them to the bivariate case. %Then they minimize the norm of the residual term to get the best decomposition and based on it, they  construct a strong formulation for  \eqref{eq:mixed_integer_problem}. 
 \citet{atamturk2018strong} study the convex hull of the mixed-integer epigraph of $(y_1-y_2)^2$ with indicators.
\citet{atamturk2018signal} give the convex hull of the more general set
\[ \set Z_-:=\left\{ (x,y,t) \in \I{2} \times\R_+:t\ge d_1y_1^2-2y_1y_2+d_2y^2_2 \, \right \}, \]
with coefficients $d \in \set D := \{d \in \R^2: d_1\ge0, d_2\ge0, d_1d_2\ge1\}$. 
%\todo{can we use the smaller and common letter $a$ instead of $d$ for the parameters of the quadratic?\\ Shaoning: Since we use $d$ in our previous paper [5], and we need to compare the results about $Z_-$, we had better keep the consistency. I would interchange $Z_-$ and $Z_-$.}
The conditions on the coefficients $d_1, d_2$ imply convexity of the quadratic.
%hey provide the convex hull description of $Z_-$ and develop a convex relaxation for \eqref{eq:mixed_integer_problem}. Moreover, they adopt the cutting surface method to refine the convex approximation iteratively. Specifically, at each iteration, they find the optimal decomposition for the current solution by solving a separation problem. Then they add the valid inequalities associated with the decomposition to  strengthen the convex relaxation.
\citet{atamturk2019rank} study the case where the continuous variables are free and the rank of the coefficient matrix is one in the context of sparse linear regression. %\eqref{eq:mixed_integer_problem}. 
\citet{burer2020quadratic} give an extended SDP formulation for the convex hull of the $2 \times 2$ bounded set $\left\{(y,yy',xx'):0\le y\le x\in\{0,1 \}^2\right\}$. Their formulation does not assume convexity of the quadratic function and contain PSD matrix variables $X$ and $Y$ as proxies for $xx'$ and $yy'$ as additional variables. 
\citet{lowdim-quad} study computable representations of convex hulls of low dimensional quadratic forms without indicator variables.

%\subsection*{Decomposition and generalization }
%In order to obtain a strong convex formulation for \eqref{eq:mixed_integer_problem} by utilizing the results of low-dimensional cases, a natural idea is decomposing $y'Qy$ into low-dimensional terms and a PSD residual term and then strengthening each low-dimensional term by its convex envelope. \citet{frangioni2019decompositions} follow this direction to get the best decomposition by minimizing the norm of the residual term. 

%more study based on convexification with two indicator variables have been carried out by researchers recently. Once the low-dimensional cases are understood well, a strong convex formulation for \eqref{eq:mixed_integer_problem} can be obtained by decomposing $y'Qy$ into a sequence of low-dimensional terms and a PSD residual term and then strengthening each low-dimensional term by its convex envelope. 

\subsection*{Contributions} There are three main contributions in this paper.
\subsubsection*{1. We show the equivalence between the optimal perspective reformulation and Shor's SDP formulation for \CQI} %\eqref{eq:mixed_integer_problem}}

These two formulations have been studied extensively in literature. While it is known that Shor's SDP formulation is at least as strong as the perspective formulation \cite{dong2015regularization}, the other direction has not been explored. We show in this paper that these two formulations are in fact equivalent.

\subsubsection*{2. Bivariate case: We describe the convex hull of the epigraph of a convex bivariate quadratic with a positive cross product and indicators.}

Consider  
\[ \set Z_+:=\left\{ (x,y,t)\in \I{2}\times\R_+:t\ge d_1y_1^2+2y_1y_2+d_2y^2_2 \, \right\}, \]
where $d \in \set D$. Observe that any bivariate convex quadratic with positive off-diagonals can be written as $d_1y_1^2+2y_1y_2+d_2y^2_2$ by scaling appropriately. Therefore, $\set Z_+$ is the \textit{complementary} set to $\set Z_-$ and, together, $\set Z_+$ and $\set Z_-$
model epigraphs of all bivariate convex quadratics with indicators.

 In this paper, we propose \textit{conic quadratic} extended formulations to describe $\cl\conv(\set Z_-)$ and $\cl\conv(\set Z_+)$.% respectively using techniques from second order cone programming. 
 %The new formulations can be reformulated as a part of an SDP relaxation of problem \CQI. %\eqref{eq:mixed_integer_problem}. 
 We also give the explicit description of $\cl\conv(\set Z_+)$ in the original space of the variables. The corresponding convex envelope of the bivariate function is a four-piece function. While the ideal formulations of $\set Z_-$ can be conveniently described with two simpler valid ``extremal" inequalities \cite{atamturk2018signal}, a similar result does not hold for $\set Z_+$ (see Example~\ref{ex:difference} in \S\ref{sec:convexHull}). The derivation of ideal formulations for the more involved set $\set Z_+$ differs significantly from the methods in \cite{atamturk2018signal}. The complementary results of this paper and \cite{atamturk2018signal} for $\set Z_-$ complete the convex hull descriptions of bivariate convex functions with indicators.

\subsubsection*{3. General case: We develop an optimal SDP relaxation based on $2\times2$ convexifications for \CQI} %\eqref{eq:mixed_integer_problem}}
 In order to construct a strong convex formulation for \CQI, %\eqref{eq:mixed_integer_problem}, 
 we  extract a sequence of  $2\times 2$ PSD matrices from $Q$ such that the residual term is a PSD matrix as well, and convexify each bivariate quadratic term utilizing the descriptions of $\cl\conv(\set Z_+)$ and $\cl\conv(\set Z_-)$. This approach works very well when $Q$ is $2\times2$ PSD decomposable, i.e., when $Q$ is scaled-diagonally dominant 
 \cite{boman2005factor}.
 Otherwise, a natural question is how to optimally decompose $y'Qy$ into bivariable convex quadratics and a residual convex quadratic term so as to achieve the best strengthening. 
 
 We address this question by deriving an optimal convex formulation using SDP duality.
 The new SDP formulation dominates any formulation obtained through a $2\times2$-decomposition scheme. This formulation is also stronger than other SDP formulations in the literature (see Figure~\ref{fig:formulationRelation}), including the optimal perspective formulation \cite{dong2015regularization}, Shor's SDP formulation \cite{shor1987quadratic}, and the optimal rank-one convexification \cite{atamturk2019rank}. In addition, the proposed formulation is solved many orders of magnitude faster than the $2\times2$-decomposition approaches based on disjunctive programming \cite{frangioni2020decompositions}.
 
% Also, \citet{atamturk2019rank} propose an SDP formulation to convexify mixed-integer quadratic optimization problems with free continuous variables based on the convex hull of a rank-one function. However, when applied to \eqref{eq:mixed_integer_problem}, since the formulation proposed by \cite{atamturk2019rank} neither exploits the higher-rank structure nor accounts for the  nonnegativity of the continuous variables, we can show the new formulation is stronger than it.

\begin{figure}[h]
	\scriptsize
	\tikzstyle{existingFormulation} = [rectangle,rounded corners, draw, thick, text width = 2cm,  fill=white, text centered]
	\tikzstyle{novelFormulation} = [ellipse, text width = 2cm, draw, thick, fill=white,text centered]
	\tikzstyle{point} = [coordinate, on grid]
	\tikzstyle{arrow} = [thick,->,>=latex,double]
	\tikzstyle{arrow2} = [thick,->,>=latex,double,dashed]
	\begin{tikzpicture}[node distance = 1.5cm]
	\node[existingFormulation](basic){Natural QP relaxation};		
	\node[point, right of=basic, xshift = 1.5cm](midpt){};
	\node[existingFormulation,above of = midpt](persp){Optimal perspective \optPersp};
	\node[existingFormulation,below of=midpt](sdps){\sdpstandard SDP};
	\node[existingFormulation, right of= midpt,xshift = 1.5cm](sdpf){Optimal rank-one \sdpfree};
	\node[novelFormulation,right of=sdpf,xshift = 2cm](sdpp){Optimal pairs \sdppos};
	\draw[arrow](basic)--(persp);
	\draw[arrow](basic)--(sdps);
	\draw[arrow](persp) to [bend right] (sdps);
	\draw[arrow2](sdps) to [bend right] (persp);
	\draw[arrow](persp)--(sdpf);
	\draw[arrow2](sdps)--(sdpf);
	\draw[arrow2](sdpf)--(sdpp);
	\end{tikzpicture}
	\caption{\footnotesize Relationship between the convex relaxations for \CQI discussed in this paper. Rectangular frames and circle frames indicate formulations in the literature and the new formulation in this paper, respectively. The arrow direction \textsf{A}$\to$\textsf{B} indicates that formulation \textsf{B} is stronger than formulation \textsf{A}. Solid and dashed lines indicate existing relations in the literature and relations shown in this paper, respectively.}\label{fig:formulationRelation}
\end{figure}
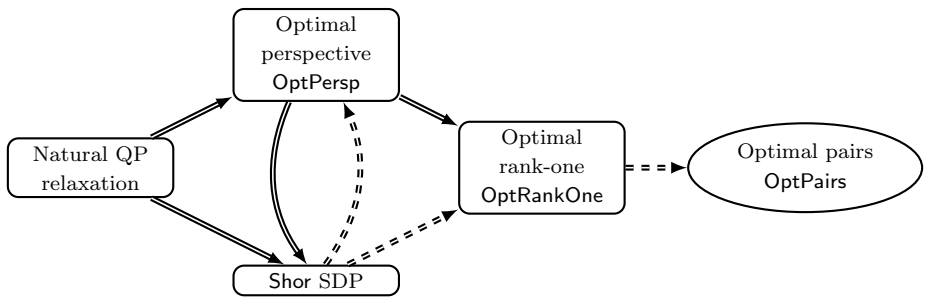

%\todo{A:Not clear what Basic/QP relaxation is. Are complementary constraints dropped in addition to relaxing binary $x$ to $0\le x \le 1$?}

\subsection*{Outline} The rest of the paper is organized as follows. In \secref{sec:previousResults} we  review the optimal perspective formulation and Shor's SDP formulation for \CQI  
and show that these two formulations are equivalent. In \secref{sec:convexHull} we provide a conic quadratic formulation of $\cl\conv(\set Z_+)$ and $\cl\conv(\set Z_-)$ in an extended space and derive the explicit form of $\cl\conv(\set Z_+)$ in the original space. In \secref{sec:formulation}, employing the results in \secref{sec:convexHull},  we give a strong convex relaxation for \CQI % \eqref{eq:mixed_integer_problem} 
using SDP techniques.  In \secref{sec:comparison}, we compare the strength of the proposed SDP relaxation with others in literature. In \secref{sec:computation}, we present computational results demonstrating the effectiveness of the proposed convex relaxations. 
Finally, in \secref{sec:conclusions}, we conclude with a few final remarks.

\subsection*{Notation} Throughout, we adopt the following convention for division by $0$: given $a\geq 0$, $a^2/0=\infty$ if $a\neq 0$ and $a^2/0=0$ if $a=0$. For a set $\set{X}\subseteq \R^n$, $\cl\conv(\set X)$ denotes the closure of the convex hull of $\set X$. 
For a vector $v$, $\diag(v)$ denotes the diagonal matrix $V$ with $V_{ii} = v_i$ for each $i$. 
%For a square matrix $V$, $\diag(V)$ denotes the vector $v$ with $v_i =V_{ii}$'s 
%\todo{Not sure I understand this notation. Alper: dropped matrix to vector as it is not used.}
Finally, $\symM_+^n$ refers to the cone of $n\times n$ real PSD matrices.

\section{Optimal perspective formulation vs. Shor's SDP}\label{sec:previousResults}
In this section we analyze two well-known convex formulations: the optimal perspective formulation and Shor's SDP. We first introduce the two formulations and then show that when applied to \CQI, they are equivalent.
%\todo{Is the adjective "standard" used for this formulation in the literature?\\
%Shaoning: I guess Shor's SDP is better. At least I saw this terminology in literature.}

First, we consider set
\[\set X_0 = \left\{ (x,y,t)\in\{0,1 \}\times\R^2_+:t\ge y^2,(1-x)y=0 \right\}. \]
The convex hull of $\set X_0$ can be described with the perspective reformulation
\[ \cl\conv(\set X_0)=\left\{ (x,y,t)\in[0,1 ]\times\R^2_+:t\ge y^2/x \right\}. \] 
Splitting $Q$ into some diagonal PSD matrix $D$ and a PSD residual, i.e., $Q-D\succeq 0$, one can apply the perspective reformulation to each diagonal term, by replacing $D_{ii}y_i^2$ with $D_{ii}y_i^2/x_i$,
to get a valid convex relaxation of \CQI --after relaxing integrality constraints in $x$ and dropping the complementary constraints $y_i(1-x_i)=0$. %eqref{eq:mixed_integer_problem}.
\citet{dong2015regularization} describe an optimal perspective relaxation for \CQI. %eqref{eq:mixed_integer_problem}. 
They show that 
every such perspective relaxation is dominated by the optimal perspective relaxation: % which is denoted by \optPersp.
\begin{subequations}\label{eq:perspective}
%\begin{equation}\label{eq:perspective}\tag{\optPersp}
\begin{align}
\min\;& a'x+b'y+\innerProd{Q}{Y}\\
(\optPersp)\qquad\qquad\text{s.t.}\;&Y-yy'\succeq 0\label{eq:perspective_psd}\\
&y_i^2\leq Y_{ii}x_i&\forall i\in[n]\label{eq:perspective_rotated}\\
&0\le x \le 1,\;y\ge0.
\end{align}
%\end{equation}
\end{subequations}

Next, we consider Shor's SDP relaxation for problem~\CQI: %eqref{eq:mixed_integer_problem}.
%\begin{equation}\tag{\sdpstandard}
\begin{subequations}
\begin{align}
\min\;& a'x+b'y+\sum_{i=1}^n\sum_{j=1}^n Q_{ij}Z_{ij}\\
\text{s.t.}\;&y_i-Z_{i,i+n}=0 & \forall i\in[n]\\
(\sdpstandard)\qquad \qquad &x_i-Z_{i+n,i+n}=0&\forall i\in[n]\\
&Z-\begin{pmatrix}
y\\x
\end{pmatrix}\begin{pmatrix}
y'&x'
\end{pmatrix}\succeq 0\label{eq:sdp_psd}\\
&0\le x \le 1,\;y\ge0,
\end{align}
\end{subequations}
where $Z\in \R^{2n\times 2n}$ such that  $Z_{ii}$ is a proxy for $y_i^2$,  $Z_{i+n,i+n}$ is a proxy for $x_i^2$, $Z_{i,i+n}$ is a proxy for $x_iy_i$, $i\in[n]$, and $Z_{ij}$ is a proxy for $y_iy_j$ for $1\leq i,j\leq n$. 

It is known that \sdpstandard\ is at least as strong as \optPersp \cite{dong2013}, as constraints \eqref{eq:perspective_rotated} are implied from the positive definiteness of some $2\times 2$ principal minors of \eqref{eq:sdp_psd}. We show below that the two formulations are, in fact, equivalent. As \optPersp is a much smaller formulation than \sdpstandard, it is preferred.
\begin{theorem}\label{thm:equivalence}
	\sdpstandard\ is equivalent to \optPersp.
\end{theorem}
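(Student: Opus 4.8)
The plan is to invoke the already-cited fact that \sdpstandard\ is at least as strong as \optPersp\ (so its optimal value is no smaller) and to establish the reverse inequality between the two optimal values. For this it suffices to lift an arbitrary feasible point $(x,y,Y)$ of \optPersp\ to a feasible point $Z$ of \sdpstandard\ with the \emph{same} objective value. Writing $Z$ in $n\times n$ blocks as $Z=\begin{pmatrix} Z^{11} & Z^{12}\\ (Z^{12})' & Z^{22}\end{pmatrix}$, the objectives agree as soon as $Z^{11}=Y$, since then $\sum_{i,j}Q_{ij}Z_{ij}=\innerProd{Q}{Z^{11}}=\innerProd{Q}{Y}$. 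The diagonal linear constraints of \sdpstandard\ force $Z^{12}_{ii}=y_i$ and $Z^{22}_{ii}=x_i$, while the off-diagonal entries of $Z^{12},Z^{22}$ are free. Thus the whole task reduces to a \emph{PSD completion}: given $W:=Y-yy'\succeq0$, choose the off-diagonal blocks so that the single constraint \eqref{eq:sdp_psd} holds.

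First I would reduce \eqref{eq:sdp_psd} to a statement about $W$. Subtracting $\binom{y}{x}(y'\ x')$, constraint \eqref{eq:sdp_psd} becomes $\begin{pmatrix} W & B\\ B' & C\end{pmatrix}\succeq0$, where $B:=Z^{12}-yx'$ has prescribed diagonal $b_i=y_i(1-x_i)$ and $C:=Z^{22}-xx'$ has prescribed diagonal $c_i=x_i(1-x_i)$; moreover the perspective inequality $y_i^2\le Y_{ii}x_i$ is equivalent to $W_{ii}\ge b_i^2/c_i$. The key idea is to build the completion by \emph{scaling} $W$ rather than by adjusting diagonals: set $\Lambda=\diag(\lambda_i)$ with $\lambda_i=b_i/W_{ii}$ (and $\lambda_i=0$ when $W_{ii}=0$, which is legitimate because $W\succeq0$ forces $b_i=0$ in that case), and take $B=W\Lambda$ and $C=\Lambda W\Lambda+E$ with $E$ a nonnegative diagonal matrix chosen to correct the diagonal of $C$. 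Then
\begin{equation*}
\begin{pmatrix} W & B\\ B' & C\end{pmatrix}=\begin{pmatrix} I\\ \Lambda\end{pmatrix}W\begin{pmatrix} I & \Lambda\end{pmatrix}+\begin{pmatrix} 0 & 0\\ 0 & E\end{pmatrix},
\end{equation*}
a sum of two PSD matrices as soon as $E\succeq0$.

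It then remains to check the diagonal bookkeeping. By construction $B_{ii}=\lambda_iW_{ii}=b_i$, so the constraint $Z^{12}_{ii}=y_i$ holds; and $C_{ii}=\lambda_i^2W_{ii}+E_{ii}=b_i^2/W_{ii}+E_{ii}$, so $Z^{22}_{ii}=x_i$ forces $E_{ii}=c_i-b_i^2/W_{ii}$. The crucial point is that this quantity is exactly nonnegative: $E_{ii}=c_i-b_i^2/W_{ii}\ge0$ unwinds to $x_iY_{ii}\ge y_i^2$, i.e.\ the perspective inequality \eqref{eq:perspective_rotated} satisfied by $(x,y,Y)$. Hence $E\succeq0$, the completed $Z$ is feasible for \sdpstandard\ (the bounds $0\le x\le1,\ y\ge0$ are inherited unchanged), and its objective equals that of the original \optPersp\ point; taking the \optPersp\ optimum gives the reverse inequality, and with the cited forward direction this yields equivalence.

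The main obstacle is precisely finding a PSD completion that simultaneously respects the fixed diagonals of $B$ and $C$: the naive choice of diagonal $B,C$ (equivalently $Z^{12}=yx'+\diag(b_i)$ and $Z^{22}=xx'+\diag(c_i)$) fails, since by a Schur-complement computation it would require $W-\diag(b_i^2/c_i)\succeq0$, which the perspective inequalities---controlling only the \emph{diagonal} of $W$---do not guarantee. Routing the coupling through the factor $\begin{pmatrix} I\\ \Lambda\end{pmatrix}W\begin{pmatrix} I & \Lambda\end{pmatrix}$ instead of forcing it onto the diagonal is what makes the residual slack $E$ collapse exactly to the perspective gap; the only care needed is the degenerate cases $W_{ii}=0$ or $c_i=0$, which are handled by the conventions above.
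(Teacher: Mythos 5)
Your proof is correct, but it takes a genuinely different route from the paper's. The paper also reduces the problem to completing the off-diagonal blocks of $Z$ for a given \optPersp-feasible $(x,y,Y)$, but it does so non-constructively: it sets up an auxiliary SDP that minimizes the smallest shift $\lambda$ making the bordered matrix PSD subject to the fixed diagonals, invokes strong duality, and shows the dual optimal value is nonpositive by collapsing the dual objective into a separable sum whose terms are nonnegative precisely because $0\le x_i\le 1$ and $x_iY_{ii}\ge y_i^2$. You instead exhibit the completion in closed form, via the factorization $\begin{psmallmatrix}I\\ \Lambda\end{psmallmatrix}W\begin{psmallmatrix}I & \Lambda\end{psmallmatrix}$ plus a diagonal correction $E$, and feasibility ($E\succeq 0$) unwinds to exactly the same perspective inequality \eqref{eq:perspective_rotated}. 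Your route is more elementary --- no duality, no case analysis on dual variables --- and more informative, since it produces an explicit certificate $Z$ witnessing \eqref{eq:sdp_psd}; the paper's duality route avoids having to guess a completion at the cost of the machinery. One small repair is needed: in the degenerate case $W_{ii}=0$ you set $\lambda_i=0$ and justify $b_i=0$ by ``$W\succeq 0$ forces $b_i=0$,'' which is not right as stated, since PSD-ness of $W$ says nothing about $b_i=y_i(1-x_i)$. What forces it is \eqref{eq:perspective_rotated} together with the bounds: $W_{ii}=0$ gives $Y_{ii}=y_i^2$, hence $y_i^2\le Y_{ii}x_i=y_i^2x_i$, so $y_i^2(1-x_i)\le 0$, and with $y_i\ge 0$ and $x_i\le 1$ this yields $y_i(1-x_i)=0$. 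With that one-line fix (and the analogous observation that $x_i=1$ makes $b_i=c_i=0$, so the division by $1-x_i$ in your ``unwinding'' is harmless), the argument is complete.
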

\begin{proof}
	First we verify \sdpstandard\ is at least as strong as \optPersp by checking that constraints \eqref{eq:perspective_psd}--\eqref{eq:perspective_rotated} are implied from \eqref{eq:sdp_psd}.  
	%Assume $(x,y,Z)$ is a feasible solution to \sdpstandard. We need to show that there exists a feasible solution to \optPersp with the same objective value.
	Let $Y_{ij} = Z_{ij}$ for any $1\le i,j\le n$. By Schur Complement Lemma,
	\[ Z-\begin{pmatrix}
	y\\x
	\end{pmatrix}\begin{pmatrix}
	y'&x'
	\end{pmatrix}\succeq 0\iff \begin{pmatrix}
	1&\begin{matrix}
	y'&x'
	\end{matrix}\\
	\begin{matrix}
	y\\x
	\end{matrix}&Z
	\end{pmatrix}\succeq 0. \]
	Since $Y$ is a principle submatrix of $Z$, we have \[ \begin{pmatrix}
	1&y'\\y&Y
	\end{pmatrix} \succeq0 \Leftrightarrow Y-yy'\succeq 0.\] Moreover, constraint \eqref{eq:sdp_psd} also implies that for any $i=1,2,\dots,n$,
	\begin{equation}\label{eq:psdSubmatrices}
		\begin{pmatrix}
		Z_{ii}&Z_{i,i+n}\\Z_{i,i+n}&Z_{i+n,i+n}
		\end{pmatrix}\succeq 0.
	\end{equation} 
	After substituting $Y_{ii} = Z_{ii}, \ x_i =  Z_{i+n,i+n},$ and $y_i = Z_{i,i+n}$, we find that \eqref{eq:psdSubmatrices} is equivalent to $Y_{ii}x_i\ge y_i^2$ in \optPersp, concluding the argument. 
	
	We next prove that \optPersp is at least as strong as \sdpstandard, by showing that for any given feasible solution $(x,y,Y)$ of  \optPersp it is possible to construct a feasible solution of \sdpstandard\ with same objective value. We rewrite $Z$ in the form  $Z=\begin{pmatrix}
	Y&U\\U'&V
	\end{pmatrix}.$ 
	For a fixed feasible solution $(x,y,Y)$ of \optPersp consider the optimization problem
	\begin{subequations}\label{eq:auxPrimal}
		\begin{align}
		\lambda^*:=&\min_{\lambda,U,V}\;\lambda\\
		\text{s.t. }&\begin{pmatrix}
		1&y'&x'\\
		y&Y &U\\
		x&U'&V
		\end{pmatrix}+\lambda I\succeq 0 \label{eq:primal_matrix_inequality}\\
		&U_{ii}=y_i,&\forall i\in[n] \,\, \\
		&V_{ii}=x_i,&\forall i\in[n],
		\end{align}
	\end{subequations}
	where $I$ is the  identity matrix. Observe that if $\lambda^*\le0$, then an optimal solution of \eqref{eq:auxPrimal} satisfies \eqref{eq:sdp_psd} and thus induces a feasible solution of \sdpstandard. We show next that this is, indeed, the case.
	
    One can verify that the strong duality holds for \eqref{eq:auxPrimal} since $\lambda$ can be an arbitrary positive number to make the matrix inequality hold strictly. Let $\tilde{Y}=\begin{pmatrix}
	1&y'\\y&Y
	\end{pmatrix}$ and consider the SDP dual of \eqref{eq:auxPrimal}:
	\begin{subequations}
		\begin{align} \label{eq:auxDual}
		\lambda^*=\max_{R,s,t,z} \;&-\innerProd{\tilde{Y}}{R} - \sum_i( 2x_iz_i+t_ix_i+2s_iy_i) &\\
		\text{s.t. }& \begin{pmatrix}
		R&\begin{matrix}
		z'\\\diag(s)
		\end{matrix}\\\label{eq:dual_variable}
		\begin{matrix}
		z&\diag(s)
		\end{matrix}&\diag(t)
		\end{pmatrix}\succeq 0\\
		&\trace(R) + \sum_i t_i =1,&\label{eq:normalization}
		\end{align}
	\end{subequations}
where $R, z, \diag(s), \diag(t)$ are the dual variable associated with $\tilde{Y}+\lambda I, x, U,$ and $V+\lambda I$, respectively. Note that we abuse the symbol $I$ to represent the identity matrices of different dimensions. Because the off-diagonal elements of $U$ and $V$ do not appear in the primal objective function and constraints  other than \eqref{eq:primal_matrix_inequality}, the corresponding dual variables are zero.

	Note that to show $\lambda^*\le0$,  it is sufficient to consider a relaxation of \eqref{eq:auxDual}. Therefore, dropping
	\eqref{eq:normalization}, 
	%is a normalization constraint which can be dropped. 	\todo{$z$ is not in the normalization constraint, but in the dual objective. Doesn't that affect the sign of $\lambda^*$? \\ Shaoning: No, we can consider it in two ways. 1. the coefficients before $z_i$ in this constraint is 0; 2. If we drop this constraint, the whole problem is homogeneous, so this constraint won't effect the sign of $\lambda^*$.}
	it is sufficient to show that
	\begin{equation}
 \innerProd{\tilde{Y}}{R} + \sum_i( 2x_iz_i+t_ix_i+2s_iy_i)\geq 0\label{eq:objective}
	\end{equation} for all $t\ge0$, $s$, $z,$ and $R$ satisfying \eqref{eq:dual_variable}. 

	Observe that if $t_i=0$, then $s_i=z_i=0$ in any solution satisfying \eqref{eq:dual_variable}. In this case, all such terms indexed by $i$ vanish in \eqref{eq:objective}. Therefore, it suffices to prove $\eqref{eq:objective}$ holds for all $t>0$.
	
%	It suffices to prove $\eqref{eq:objective}$ for all $t>0$.  If the conclusion holds for all $t$ s.t. $t>0$, then for any $t\ge 0$, we can consider $t+\epsilon e$, where $e$ is a vector with all elements as $1$'s, and we have $\innerProd{\tilde{Y}}{R} + \sum_i( 2x_iz_i+(t_i+\epsilon)x_i+2s_iy_i)\ge0$, i.e. $\innerProd{\tilde{Y}}{R} + \sum_i( 2x_iz_i+t_ix_i+2s_iy_i)\ge -\epsilon(\sum_i x_i)\ge -n \epsilon$. Since $\epsilon$ can be arbitrary small, we get $\innerProd{\tilde{Y}}{R} + \sum_i( 2x_iz_i+t_ix_i+2s_iy_i)\ge0$.
	
	For $t>0$, by Schur Complement Lemma, \eqref{eq:dual_variable} is equivalent to
	\begin{equation}
	R\succeq \begin{pmatrix}
	z'\\\diag(s)
	\end{pmatrix}\diag^{-1}(t)\begin{pmatrix}
	z&\diag(s)
	\end{pmatrix}.\label{eq:dual_variable_equiv}
	\end{equation}
	Moreover, since $\tilde{Y}\succeq0$, we find that $$\innerProd{\tilde{Y}}{R}\ge\innerProd{\tilde{Y}}{\begin{pmatrix}
		z'\\\diag(s)
		\end{pmatrix}\diag^{-1}(t)\begin{pmatrix}
		z&\diag(s)
		\end{pmatrix}}$$
	whenever $R$ satisfies \eqref{eq:dual_variable_equiv}. Substituting the term $\innerProd{\tilde{Y}}{R}$ in \eqref{eq:objective} by its lower bound, it suffices to show that 	
	\begin{equation}\label{eq:lb_objecive}
		\innerProd{\tilde{Y}}{\begin{pmatrix}
			z'\\\diag(s)
			\end{pmatrix}\diag^{-1}(t)\begin{pmatrix}
			z&\diag(s)
			\end{pmatrix}} + \sum_i( 2x_iz_i+t_ix_i+2s_iy_i)\ge0,
	\end{equation} 
	holds for all $t>0,s, z$ and $R$ satisfying \eqref{eq:dual_variable_equiv}. A direct computation shows that
	\[\begin{pmatrix}
	z'\\\diag(s)
	\end{pmatrix}\diag^{-1}(t)\begin{pmatrix}
	z&\diag(s)
	\end{pmatrix}=\begin{pmatrix}
	\sum_i z_i^2/t_i&z_1s_1/t_1&\cdots&z_ns_n/t_n\\
	z_1s_1/t_1&s_1^2/t_1&&\\
	\vdots&&\ddots&\\
	z_ns_n/t_n&&&s_n^2/t_n
	\end{pmatrix}\]
	with all off-diagonal elements equal to $0$, except for the first row/column.
%	\[\begin{pmatrix}
%	z'\\\diag(s)
%	\end{pmatrix}\diag^{-1}(t)\begin{pmatrix}
%	z&\diag(s)
%	\end{pmatrix}=\begin{pmatrix}
%	\sum_i \frac{z_i^2}{t_i}&\frac{z_1s_1}{t_1}&\frac{z_2s_2}{t_2}&\cdots&\frac{z_ns_n}{t_n}\\
%	\frac{z_1s_1}{t_1}&\frac{s_1^2}{t_1}&&&\\
%	\frac{z_2s_2}{t_2}&&\frac{s_2^2}{t_2}&&\\
%	\vdots&&&\ddots&\\
%	\frac{z_ns_n}{t_n}&&&&\frac{s_n^2}{t_n}
%	\end{pmatrix}.\]
	 Thus, \eqref{eq:lb_objecive} reduces to the separable expression
\[ \sum_i \left( \frac{z_i^2}{t_i}+\frac{2z_is_iy_i}{t_i}+\frac{s_i^2}{t_i}Y_{ii}+2x_iz_i+x_it_i+2y_is_i \right)\ge0.  \]
	 For each term, we have
	\begin{align*}
	&\frac{z_i^2}{t_i}+\frac{2z_is_iy_i}{t_i}+\frac{s_i^2}{t_i}Y_{ii}+2x_iz_i+x_it_i+2y_is_i\\
	= \ &(z_i^2+2z_is_iy_i+s_i^2Y_{ii})/t_i+x_it_i+2x_iz_i+2y_is_i\\
	\ge \ & 2\sqrt{x_i(z_i^2+2z_is_iy_i+s_i^2Y_{ii})}+2x_iz_i+2y_is_i\\
	\ge \ &0,
	\end{align*}
	where the first inequality follows from the mean-value inequality $a+b\ge 2\sqrt{ab}$ for $a,b\ge0$.
	The last inequality holds trivially if $2x_iz_i+2y_is_i\geq 0$;  otherwise, we have
	\begin{align*}
	&\sqrt{x_i(z_i^2+2z_is_iy_i+s_i^2Y_{ii})}\ge -(x_iz_i+y_is_i)\\
	\iff&x_i(z_i^2+2z_is_iy_i+s_i^2Y_{ii})\ge(x_iz_i+y_is_i)^2\\
	\iff&x_iz_i^2(1-x_i)+s_i^2(x_iY_{ii}-y_i^2)\ge 0. \tag{as $0\le x_i\le 1$ and $x_iY_{ii}\ge y_i^2$}
	\end{align*}
	In conclusion, $\lambda^*\le0$ and this completes the proof.
\end{proof}
  %On the one hand, since \sdpstandard introduces more variables than \optPersp,  the solution delivered by \sdpstandard is as good as, if not better than, the solution delivered by \optPersp. On the other hand, as we only specify a small number of elements of $Z$, the SDP constraint in \sdpstandard does not provide more restrictions than that the submatrices of form \eqref{eq:psdSubmatrices} should be PSD. This results in the equivalence of \sdpstandard and \optPersp.
  
%% end of section 2

\section{Convex hull description of $\set Z_+$}\label{sec:convexHull}
In this section, we give ideal convex formulations for
\[ 
\set Z_+=\left\{ (x,y,t)\in \I{1}\times\R_+:t\ge d_1y_1^2+2y_1y_2+d_2y_2^2 \,  \right\}. 
\]
 When $d_1=d_2=1$, $\set Z_+$ reduces to the simpler rank-one set % with two nonnegative continuous variables
\[ 
\set X_+=\left\{ (x,y,t)\in\I{1}\times\R_+:t\ge (y_1+y_2)^2 \, \right\}. 
\] 
Set $\set X_+$ is of special interest as it arises naturally in \CQI when $Q$ is a diagonal dominant matrix, see computations in \S\ref{sec:comp_synt} for details.
As we shall see, the convex hulls of $\set Z_+$ and $\set X_+$ are significantly more complicated than their 
complementary sets $\set Z_-$ and $\set X_-$ studied earlier.
 In \secref{sec:extendedFormulation},  we develop an SOCP-representable extended formulation of $\cl\conv(\set Z_+)$. Then, in \secref{sec:convexHullDescription}, we derive the explicit form of $\cl\conv(\set Z_+)$ in the original space of variables.

\subsection{Conic quadratic-representable extended formulation} \label{sec:extendedFormulation} We start by 
writing $\set Z_+$ as the disjunction of four convex sets defined by all values of the indicator variables; that is,
 \[\set Z_+= \set Z_+^1\cup \set Z_+^2\cup \set Z_+^3\cup \set Z_+^4,\] where $\set Z_+^i, i=1,2,3,4$ are convex sets defined as:
 \begin{align*}
 	&\set Z_+^1=\{ (1,0, u, 0,t_1):t_1\ge d_1 u^2, u\ge0 \},\\
 	&\set Z_+^2=\{ (0,1, 0, v,t_2):t_2\ge d_2 v^2, v\ge0 \},\\
 	&\set Z_+^3=\{ (1,1,w_1,w_2,t_3):t_3\ge d_1 w_1^2 + 2 w_1w_2+ d_2 w_2^2, w_1\ge0, w_2\ge0  \},\\
 	&\set Z_+^4 = \{ (0,0,0,0,t_4):t_4\ge0 \}.
 \end{align*}
  By the definition, a point $(x_1,x_2,y_1,y_2,t)\in\conv(\set Z_+)$ if and only if it can be written as a convex combination of four points belonging in $\set Z_+^i, i=1,2,3,4$. Using $\lambda=(\lambda_1,\lambda_2,\lambda_3,\lambda_4)$ as the corresponding weights, $(x_1,x_2,y_1,y_2,t)\in\conv(\set Z_+)$ if and only if the following inequality system has a feasible solution
 \begin{subequations} \label{eq:disjunctZplus}	
 	\begin{align}
 	&\lambda_1+\lambda_2+\lambda_3+\lambda_4=1 \label{eq:disjunctZplus1}\\
 	&x_1=\lambda_1+\lambda_3, \; x_2=\lambda_2+\lambda_3 \label{eq:disjunctZplus2}\\
 	&y_1=\lambda_1u+\lambda_3w_1,\; y_2=\lambda_2v+\lambda_3  w_2\label{eq:disjunctZplus3}\\
 	&t=\lambda_1t_1+\lambda_2t_2+\lambda_3t_3+\lambda_4t_4\label{eq:disjunctZplus4}\\
 	&t_1\ge d_1u^2,\; t_2\ge d_2v^2,\; t_3\ge d_1w_1^2+2w_1w_2+d_2w_2^2,\; t_4\ge 0\label{eq:disjunctZplus5}\\
 	&u,v,w_1,w_2,\lambda_1,\lambda_2,\lambda_3,\lambda_4\ge0.
 	\end{align}
 \end{subequations}
%Note that a convex extended formulation of $\cl\conv(Z_+)$ could also be obtained using the technique in disjunctive programming \cite[see][]{ceria1999convex}. Also, \citet{vielma2019small} demonstrate how to call Cayley embedding to eliminate the auxiliary variables.

 We will now simplify \eqref{eq:disjunctZplus}. First, by Fourier–Motzkin elimination, one can substitute $t_1,t_2,t_3,t_4$ with their lower bounds in \eqref{eq:disjunctZplus5} and reduce \eqref{eq:disjunctZplus4} to $t\ge \lambda_1d_1u^2+\lambda_2d_2v^2+\lambda_3(d_1w_1^2+2w_1w_2+d_2w_2^2)$. Similarly, since $\lambda_4\ge0$, one can eliminate $\lambda_4$ and reduce \eqref{eq:disjunctZplus1} to $\sum_{i=1}^3 \lambda_i\le 1$.   Next, using \eqref{eq:disjunctZplus3}, one can substitute $u=(y_1-\lambda_3w_1)/\lambda_1$ and $v=(y_2-\lambda_3w_2)/\lambda_2$. Finally, using \eqref{eq:disjunctZplus2}, one can substitute $\lambda_1=x_1-\lambda_3$ and $\lambda_2=x_2-\lambda_3$ to arrive at 
 % For the sake of convenience, let $\lambda$ denote $\lambda_3$ and the inequality system \eqref{eq:disjunctZplus} reduces to
\begin{subequations}\label{eq:posDisjunct}	
	\begin{align}
	&	\max\{ 0,x_1+x_2-1 \}\le\lambda_3\le\min\{ x_1,x_2 \} \label{eq:posDisjunctA}\\
	& \lambda_3 w_i\le y_i,\;\; i=1,2\label{eq:posUpper}\\
	&w_i\ge0,\;\; i=1,2\label{eq:posDisjunctC}\\
	&t\ge \frac{d_1 (y_1 \!- \!\lambda_3 w_1)^2}{x_1 \! - \!\lambda_3} \! + \! 
	\frac{d_2(y_2 \! - \! \lambda_3 w_2)^2}{x_2 \! - \! \lambda_3} \! + \!
	\lambda_3(d_1w_1^2+2w_1w_2+d_2w_2^2)\label{eq:posDisjunctD},
	\end{align}
\end{subequations}
where \eqref{eq:posDisjunctA} results from the nonnegativity of $\lambda_1,\lambda_2,\lambda_3,\lambda_4$, \eqref{eq:posUpper} from the nonnegativity of $u$ and $v$.  Finally,	observe that \eqref{eq:posUpper} is redundant for \eqref{eq:posDisjunct}: indeed, if there is a solution $(\lambda, w, t)$ satisfying \eqref{eq:posDisjunctA}, \eqref{eq:posDisjunctC} and \eqref{eq:posDisjunctD} but violating \eqref{eq:posUpper}, one can decrease $w_1$ and $w_2$ such that \eqref{eq:posUpper} is satisfied without violating \eqref{eq:posDisjunctD}.

Redefining variables in \eqref{eq:posDisjunct}, we arrive at the following conic quadratic-representable extended formulation for $\cl\conv(\set Z_+)$ 
and its rank-one special case $\cl\conv(\set X_+)$.
\begin{proposition}\label{prop:extendedPos}
	The set $\cl\conv(\set Z_+)$ can be represented as
	\begin{align*}
	\cl\conv(\set Z_+)=\bigg\{& (x,y,t)\in[0,1]^2\times \R_+^3: \exists \lambda\in\R_+,z\in\R_+^2 
	\text{ s.t.} \\ 
	&x_1+x_2-1 \le\lambda\le\min\{ x_1,x_2 \}, \\
	&\begin{matrix}
	t\ge \frac{d_1(y_1-z_1)^2}{x_1-\lambda}+\frac{d_2(y_2-z_2)^2}{x_2-\lambda}
	+\frac{d_1z_1^2+2z_1z_2+d_2z_2^2}{\lambda} \bigg\} \cdot
	\end{matrix}
	\end{align*}
\end{proposition}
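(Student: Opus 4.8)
The plan is to obtain $\cl\conv(\set Z_+)$ by disjunctive programming, exactly along the route set up in the paragraphs preceding the statement. Since $\set Z_+ = \set Z_+^1\cup\set Z_+^2\cup\set Z_+^3\cup\set Z_+^4$ with each piece a closed convex set, the convex hull of the union is the set of convex combinations taking one point from each piece. First I would make this characterization precise: a point lies in $\conv(\set Z_+)$ if and only if the lifted system \eqref{eq:disjunctZplus}, in the weights $\lambda$ and the piece-coordinates $u,v,w,t_i$, is feasible. This step needs nothing beyond the convexity of the four pieces and the definition of the convex hull of a finite union.

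Next I would eliminate the auxiliary variables following the reduction already carried out before the statement: Fourier--Motzkin on $t_1,\dots,t_4$ collapses \eqref{eq:disjunctZplus4}--\eqref{eq:disjunctZplus5} into a single epigraph inequality; eliminating $\lambda_4$ turns \eqref{eq:disjunctZplus1} into $\sum_{i\le 3}\lambda_i\le 1$; and the substitutions $u=(y_1-\lambda_3 w_1)/\lambda_1$, $v=(y_2-\lambda_3 w_2)/\lambda_2$ together with $\lambda_1=x_1-\lambda_3$, $\lambda_2=x_2-\lambda_3$ produce \eqref{eq:posDisjunct}. I would then record that \eqref{eq:posUpper} is redundant, since decreasing $w_1,w_2$ toward $0$ only relaxes \eqref{eq:posDisjunctD}. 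The final step is purely cosmetic: the change of variables $z_i:=\lambda_3 w_i$ and the renaming $\lambda:=\lambda_3$, under which $w_i\ge 0\Leftrightarrow z_i\ge 0$ and $\lambda_3(d_1 w_1^2+2w_1w_2+d_2 w_2^2)=(d_1 z_1^2+2z_1z_2+d_2 z_2^2)/\lambda$, yielding precisely the stated description; call the resulting set $S$.

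The delicate point, and the step I expect to be the main obstacle, is the passage from $\conv$ to $\cl\conv$ together with the degenerate regimes in which a denominator vanishes, namely $\lambda=0$, $x_1=\lambda$, or $x_2=\lambda$. Here the division-by-zero convention does genuine work. For the inclusion $\cl\conv(\set Z_+)\subseteq S$ I would argue that, with the convention, each term is the closure of a perspective of a convex quadratic and hence jointly convex and lower semicontinuous, so $S$ is convex (a projection of a convex set) and, as established below, closed. A direct check then shows $\set Z_+\subseteq S$: choosing $\lambda=\min\{x_1,x_2\}$ and $z=\lambda y$ and reading off the convention in each of the four indicator configurations verifies membership. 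Since $S$ is a closed convex superset of $\set Z_+$, this gives $\cl\conv(\set Z_+)\subseteq S$.

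For the reverse inclusion $S\subseteq\cl\conv(\set Z_+)$ I would invert the elimination. On the part of $S$ where $\lambda_1=x_1-\lambda>0$ and $\lambda_2=x_2-\lambda>0$ the substitutions are genuine and one recovers an explicit convex combination of points of $\set Z_+$, so such points already lie in $\conv(\set Z_+)$. The remaining points, those for which the convention is invoked in one of the first two terms, i.e.\ $x_1=\lambda$ or $x_2=\lambda$, I would reach as limits of these interior representations, verifying that the convention-extended term matches the limiting value of the corresponding product: for instance, when $x_1=\lambda$ the convention forces $y_1=z_1$ and contributes $0$, which is exactly $\lim\lambda_1 d_1 u^2$ as $\lambda_1\to 0$ with $\lambda_1 u\to y_1-z_1=0$. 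The crux is to confirm that these limits neither add spurious points nor omit any, which rests on $S$ being genuinely closed; I would get closedness from coercivity of the right-hand side of the $t$-inequality in $z$ over the nonnegative orthant (using $d\in\set D$, so that $d_1 z_1^2+2z_1z_2+d_2z_2^2$ is positive on $\R_+^2\setminus\{0\}$) together with compactness of the admissible interval for $\lambda$.
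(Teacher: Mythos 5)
Your proposal follows exactly the paper's own route: the same four-piece disjunction $\set Z_+=\set Z_+^1\cup\set Z_+^2\cup\set Z_+^3\cup\set Z_+^4$, the same Fourier--Motzkin eliminations and substitutions leading to \eqref{eq:posDisjunct}, the same redundancy argument for \eqref{eq:posUpper}, and the same change of variables $z_i=\lambda_3 w_i$, so it is correct and essentially identical to the paper's derivation. The one difference is that you make explicit what the paper leaves implicit in its division-by-zero convention --- namely the passage from $\conv$ to $\cl\conv$, via convexity and closedness of the projected set (coercivity in $z$ plus compactness in $\lambda$) and the limit treatment of vanishing denominators --- which is a tightening of the same argument rather than a different proof.
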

%\begin{proof}
%The conclusion is drawn by letting $z_i=\lambda w_i,i=1,2$.
%\end{proof}
\begin{corollary}
	The set $\cl\conv(\set X_+)$ can be represented as
	\begin{align*}
	\cl\conv(\set X_+)=\bigg\{& (x,y,t)\in[0,1]^2\times \R_+^3: \exists \lambda\in\R_+,z\in\R_+^2  
	\text{ s.t. }\\ &x_1+x_2-1 \le\lambda\le\min\{ x_1,x_2 \}, \\ 
	&t\ge \frac{(y_1-z_1)^2}{x_1-\lambda}+\frac{(y_2-z_2)^2}{x_2-\lambda}+\frac{(z_1+z_2)^2}{\lambda} &\bigg\} \cdot
	\end{align*}
\end{corollary}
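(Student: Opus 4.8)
The plan is to recognize that $\set X_+$ is nothing but $\set Z_+$ under the particular coefficient choice $d_1 = d_2 = 1$, so the Corollary follows as an immediate specialization of Proposition~\ref{prop:extendedPos} rather than requiring an independent argument. First I would verify admissibility of this choice: the pair $d = (1,1)$ lies in $\set D$ because $d_1, d_2 \ge 0$ and $d_1 d_2 = 1 \ge 1$, so the hypotheses of Proposition~\ref{prop:extendedPos} are met. Next I would confirm that the defining inequality of $\set X_+$ agrees with that of $\set Z_+$ under this substitution, namely $d_1 y_1^2 + 2 y_1 y_2 + d_2 y_2^2 = y_1^2 + 2 y_1 y_2 + y_2^2 = (y_1 + y_2)^2$, which is precisely the right-hand side appearing in the definition of $\set X_+$. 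Hence $\set X_+$ coincides with $\set Z_+$ when $d_1 = d_2 = 1$.

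With this identification in hand, I would simply read off the extended description from Proposition~\ref{prop:extendedPos} at $d_1 = d_2 = 1$. The box constraint $x_1 + x_2 - 1 \le \lambda \le \min\{x_1, x_2\}$ and the nonnegativity requirements on $\lambda$ and $z$ are unchanged. In the epigraph inequality, the first two perspective terms become $\frac{(y_1 - z_1)^2}{x_1 - \lambda}$ and $\frac{(y_2 - z_2)^2}{x_2 - \lambda}$, while the third term collapses via the perfect-square identity $d_1 z_1^2 + 2 z_1 z_2 + d_2 z_2^2 = z_1^2 + 2 z_1 z_2 + z_2^2 = (z_1 + z_2)^2$, yielding $\frac{(z_1 + z_2)^2}{\lambda}$. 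This reproduces exactly the claimed formula.

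Since every step is a direct substitution into an already-proved representation, there is no genuine obstacle here; the only point demanding any care is the bookkeeping around the division-by-zero convention adopted in the Notation (namely $a^2/0 = \infty$ for $a \ne 0$ and $a^2/0 = 0$ for $a = 0$), which governs the boundary cases where $\lambda$, $x_1 - \lambda$, or $x_2 - \lambda$ vanishes. Because this convention is identical in both sets, it transfers automatically, and no separate verification beyond the substitution is needed.
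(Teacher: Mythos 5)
Your proof is correct and matches the paper exactly: the paper states this Corollary with no separate proof, treating it precisely as the specialization of Proposition~\ref{prop:extendedPos} to $d_1=d_2=1$, which is what you carry out. Your extra checks (that $(1,1)\in\set D$, that the quadratic collapses to $(y_1+y_2)^2$ and $(z_1+z_2)^2$, and that the division-by-zero convention transfers) are exactly the right bookkeeping and nothing more is needed.
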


\begin{remark}\label{remark:extendedNeg} 
	%\todo{This remark is long\\ Shaoning: should we compare the two extended formulations in details? This is not the focus of our paper. If we want to explain it clearly, we need at least one page and had better make this part a subsection itself.}
% Defined in intro section already
%Similarly, we can consider $Z_-$ and $X_-,$ where
%\[ Z_-=\left\{ (x,y,t)\in\I{1}\times\R_+:t\ge d_1y_1^2-2y_1y_2+d_2y_2^2 \, \right\}, \]
%and 
%\[ X_-=\left\{ (x,y,t)\in\I{1} \times\R_+:t\ge (y_1-y_2)^2 \right\}. \] 
One can apply similar arguments to the complementary set $\set Z_-$ %and its rank-one special case $X_-$ 
to derive an SOCP representable formulation of its convex hull as
	\begin{align*}
	\cl\conv(\set Z_-)=\bigg\{& (x,y,t)\in[0,1]^2\times \R_+^3: \exists \lambda\in\R_+,z\in\R^2 \text{ s.t. }\\ 
	&x_1+x_2-1 \le\lambda\le\min\{ x_1,x_2 \}, \\ &	 z_1\le y_1, \ z_2\le y_2, \\
	&\begin{matrix}
	t\ge \frac{d_1(y_1-z_1)^2}{x_1-\lambda}+\frac{d_2(y_2-z_2)^2}{x_2-\lambda}+\frac{d_1z_1^2-2z_1z_2+d_2z_2^2}{\lambda} \bigg\} \cdot
	\end{matrix}
	\end{align*}
This extended formulation is smaller than the one given \citet{atamturk2018signal} for $\cl\conv(\set Z_-)$.
\ignore{	and %a new extended formulation of $\cl\conv(X_-)$ as
	\begin{align*}
	\cl\conv(X_-)=\bigg\{& (x,y,t)\in[0,1]^2\times \R_+^3: \exists \lambda\in\R_+,z\in\R^2 
	\text{ s.t. } \\ &x_1+x_2-1 \le\lambda\le\min\{ x_1,x_2 \}, \\ & z_1\le y_1, \ z_2\le y_2, \\ 
	&t\ge \frac{(y_1-z_1)^2}{x_1-\lambda}+\frac{(y_2-z_2)^2}{x_2-\lambda}+\frac{(z_1-z_2)^2}{\lambda} &\bigg\} \cdot
	\end{align*}
}

	\ignore{
	\begin{align*}
	\cl\conv(Z_-)= \bigg\{&
	(x,y,t)\in[0,1]^2\times\R_+^3: \exists t_1,t_2,q_1,q_2\in\R_+ \text{ s.t. } \\
	&	y_1^2\le t_1x_1,\; y_2^2\le t_2x_2\\
	&	d_1v_1\ge d_1y_1-y_2,\;v_1^2\le q_1x_1\\
	&	d_1v_2\ge-d_1y_1+y_2,\;v_2^2\le 1_1x_2\\
	&	d_1q_1+t_2\left(d_2-\frac{1}{d_1}\right)\le t\\
	&	d_2w_1\ge y_1-d_2y_2,\;w_1^2\le q_2x_1\\
	&	d_2w_2\ge-y_1+d_2y_2,\;w_2^2\le q_2x_2\\
	&	d_2q_2+t_1\left(d_1-\frac{1}{d_2} \right)\le t \bigg \}
\end{align*}

	\begin{align*}
	\cl\conv(X_-)=\bigg\{& (x,y,t)\in[0,1]^2\times\R_+^3:\exists v,w\in\R \text{ s.t. } \\
	&v\ge y_1-y_2,v^2\le tx_1,w\ge y_2-y_1,w^2\le tx_2. &\bigg\}
	\end{align*}
While the representation of $\cl\conv(X_-)$ in Proposition~\ref{prop:otherX} is more compact than the one proposed in this paper, the extended formulation of $\cl\conv(Z_-)$ in Proposition~\ref{prop:otherZ} requires much more additional variables than the one we obtain above.
}
\end{remark}
	
	\ignore{
Both extended formulation and explicit form of $\cl\conv(Z_-)$ have been studied in . To make a comparison, we state the result appearing in \cite{atamturk2018signal} as follows.

\begin{proposition}[\citet{atamturk2018signal}]\label{prop:otherX}
	The convex hull of $X_-$ can be represented as 

\end{proposition}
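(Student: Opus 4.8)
Write $P$ for the set on the right-hand side of the claimed identity, and let $P_\lambda$ denote the extended formulation of $\cl\conv(\set X_-)$ obtained by specializing the description in Remark~\ref{remark:extendedNeg} to $d_1=d_2=1$; by that remark $P_\lambda=\cl\conv(\set X_-)$. The plan is to prove $P=\cl\conv(\set X_-)$ through the two inclusions $\cl\conv(\set X_-)\subseteq P$ and $P\subseteq P_\lambda$, so that the harder (tightness) direction is reduced to the already-established description $P_\lambda$ rather than being argued from scratch by disjunctive programming.

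For the validity inclusion $\cl\conv(\set X_-)\subseteq P$, I would first observe that $P$ is closed and convex: in the lifted space the constraints $v^2\le tx_1$ and $w^2\le tx_2$ are rotated second-order cones and the remaining constraints are linear, so the lifted set is convex, and since for fixed $(x,y,t)$ the auxiliaries range over the bounded intervals $y_1-y_2\le v\le\sqrt{tx_1}$ and $y_2-y_1\le w\le\sqrt{tx_2}$, the projection onto $(x,y,t)$ is closed. It then suffices to check $\set X_-\subseteq P$, a four-case verification over $(x_1,x_2)\in\{0,1\}^2$: in each case the choice $v=(y_1-y_2)_+$ and $w=(y_2-y_1)_+$ satisfies $v\ge y_1-y_2$, $w\ge y_2-y_1$, and one verifies $v^2\le tx_1$, $w^2\le tx_2$ using $t\ge(y_1-y_2)^2$ together with the complementarity $y_i(1-x_i)=0$. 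Convexity and closedness of $P$ then upgrade this to $\cl\conv(\set X_-)\subseteq P$.

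For the tightness inclusion $P\subseteq P_\lambda$, take $(x,y,t)\in P$; by the symmetry in the two coordinates assume $y_1\ge y_2$, so that the binding constraint is $(y_1-y_2)^2\le tx_1$. I would then construct explicit witnesses $(\lambda,z)$ for $P_\lambda$ by setting $z_2=y_2$ (legitimate since $z_2\le y_2$ is required), which annihilates the middle term, and choosing $z_1$ optimally in the remaining expression. The key algebraic fact is the infimal-convolution identity
\[
\min_{z_1}\left(\frac{(y_1-z_1)^2}{x_1-\lambda}+\frac{(z_1-y_2)^2}{\lambda}\right)=\frac{(y_1-y_2)^2}{x_1},
\]
whose minimizer $z_1=\bigl(\lambda y_1+(x_1-\lambda)y_2\bigr)/x_1$ is a convex combination of $y_2$ and $y_1$ and therefore satisfies $z_2=y_2\le z_1\le y_1$. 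This yields a feasible $(\lambda,z)$ for which the right-hand side of the inequality defining $P_\lambda$ equals $(y_1-y_2)^2/x_1\le t$, so $(x,y,t)\in P_\lambda=\cl\conv(\set X_-)$.

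The main obstacle is not the identity itself but the bookkeeping on the degenerate faces, where the division-by-zero convention and a limiting argument are needed: one must select $\lambda$ inside the admissible interval $[\max\{0,x_1+x_2-1\},\min\{x_1,x_2\}]$ while keeping $x_1-\lambda>0$, and treat the boundary cases separately (for instance $x_2=1$, which forces $\lambda=x_1$ and hence $z_1=y_1$, collapsing the first term under the convention $0/0=0$; and $x_1=0$, where membership in $P$ forces $y_1=y_2$). This case analysis, rather than any deep inequality, is where the argument is most delicate. It is also precisely where the \emph{negative} sign of the cross term is used: the constraints $z_1\le y_1,\ z_2\le y_2$ in $P_\lambda$ are what force the minimizing $z_1$ to lie between $y_2$ and $y_1$ and thereby make the two extremal inequalities of $P$ sufficient — an advantage that has no analogue in the positive-cross-term set $\set Z_+$, whose convex hull genuinely requires the full $\lambda$-family of Proposition~\ref{prop:extendedPos}.
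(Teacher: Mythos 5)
Your proof is correct, but note first that the paper contains no proof to compare against: this proposition is quoted from \citet{atamturk2018signal} (in the source it sits in suppressed text, with the right-hand side of the representation omitted). You correctly reconstructed the intended statement, namely $\cl\conv(\set X_-)=\{(x,y,t)\in[0,1]^2\times\R_+^3:\exists\, v,w\in\R \text{ s.t. } v\ge y_1-y_2,\ v^2\le tx_1,\ w\ge y_2-y_1,\ w^2\le tx_2\}$, and this is consistent with the $d_1=d_2=1$ specialization of the formula for $f^*_-$ recalled at the end of \S\ref{sec:convexHull}, which reduces to $t\ge (y_1-y_2)^2/x_1$ when $y_1\ge y_2$ and $t\ge(y_1-y_2)^2/x_2$ when $y_2\ge y_1$ --- exactly the projection of your set $P$.

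Your route also differs from the original one in \citet{atamturk2018signal}, which works directly in the original space, proving validity of the two extremal conic inequalities and tightness by decomposition arguments. You instead sandwich: $\set X_-\subseteq P$ together with convexity and closedness of $P$ (your compact-fiber argument is the right fix for closedness of the projection, since $v\in[\max\{y_1-y_2,-\sqrt{tx_1}\},\sqrt{tx_1}]$ uniformly bounds witnesses along convergent sequences), and then $P\subseteq P_\lambda$ via explicit witnesses. The infimal-convolution identity $\min_{z_1}\{(y_1-z_1)^2/(x_1-\lambda)+(z_1-y_2)^2/\lambda\}=(y_1-y_2)^2/x_1$ is correct, its minimizer is the convex combination you state, and your observation that the resulting value is independent of $\lambda$ is precisely why no $\lambda$-dependent pieces survive into the original-space description for the negative cross term; in effect you are re-running the $s_1>0$ branch of the KKT analysis from the proof of Proposition~\ref{prop:originalPos} in the regime where it collapses, which is the structural contrast with $\set Z_+$ (Example~\ref{ex:difference}) that you rightly flag. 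The degenerate faces ($x_1=0$ forcing $y_1=y_2$; $x_2=1$ forcing $\lambda=x_1$ and $z_1=y_1$) are handled consistently with the paper's division-by-zero convention. Two caveats on what each approach buys: your argument is shorter but only as strong as Remark~\ref{remark:extendedNeg}, which this paper asserts ``by similar arguments'' to Proposition~\ref{prop:extendedPos} without a detailed derivation, so it is not logically independent of the disjunctive machinery; and it establishes only the rank-one case $d_1=d_2=1$, which is all the statement requires, whereas the original source proves the general $\set Z_-$ description of which this is a specialization.
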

%\todo{This comparison gives the impression that the approach is not effective\\ Shaoning: Done.}
%Notice that the extended formulation in Proposition~\ref{prop:others} requires two additional variables while ours requires six additional variables if we express it in the SOCP form. Thus, the extended formulation in Proposition~\ref{prop:others} is more compact.
Also, the authors discover a nice structural property of $\cl\conv(Z_-)$. Specifically, if we decompose the bivariate quadratic function as
\begin{align*}
d_1y_1^2-2y_1y_2+d_2y_2^2=&d_1(y_1-\frac{y_2}{d_1})^2+(d_2-\frac{1}{d_1})y_2^2\\
=&d_2(\frac{y_1}{d_2}-y_2)^2+(d_1-\frac{1}{d_2})y_2^2,
\end{align*}
and strengthen each quadratic term by adding valid inequality using perspective reformulation and applying Proposition~\ref{prop:otherX}, the resulting inequalities are sufficient to describe $\cl\conv(Z_-)$.  
\begin{proposition}[\citet{atamturk2018signal}]\label{prop:otherZ}
	A point $(x,y,t)\in\cl\conv(Z_-)$ if and only if $(x,y,t)\in[0,1]^2\times\R_+^3$ and there exists $t_1,t_2,q_1,q_2\in\R_+$ and $v_1.v_2,w_1,w_2\in\R_+$ such that the set of inequalities
	\begin{subequations}
		\begin{align*}
			y_1^2\le t_1x_1,\; y_2^2\le t_2x_2\\
			d_1v_1\ge d_1y_1-y_2,\;v_1^2\le q_1x_1\\
			d_1v_2\ge-d_1y_1+y_2,\;v_2^2\le 1_1x_2\\
			d_1q_1+t_2\left(d_2-\frac{1}{d_1}\right)\le t\\
			d_2w_1\ge y_1-d_2y_2,\;w_1^2\le q_2x_1\\
			d_2w_2\ge-y_1+d_2y_2,\;w_2^2\le q_2x_2\\
			d_2q_2+t_1\left(d_1-\frac{1}{d_2} \right)\le t
		\end{align*}
	\end{subequations}
is feasible.
\end{proposition}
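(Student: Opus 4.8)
The plan is to replay the disjunctive-programming derivation that produced Proposition~\ref{prop:extendedPos}, changing only the sign of the cross term throughout. First I would write $\set Z_-$ as the union of the four convex pieces obtained by fixing the indicators, namely $\set Z_-^1,\dots,\set Z_-^4$ defined exactly as the $\set Z_+^i$ but with the quadratic $d_1w_1^2-2w_1w_2+d_2w_2^2$ on the piece $x_1=x_2=1$. Introducing convex-combination weights $\lambda_1,\dots,\lambda_4$, I would form the analogue of system~\eqref{eq:disjunctZplus}, use Fourier–Motzkin elimination to substitute $t_1,\dots,t_4$ by their lower bounds, eliminate $\lambda_4$ (reducing $\sum_i\lambda_i=1$ to $\sum_{i=1}^3\lambda_i\le 1$), substitute $u,v$ from \eqref{eq:disjunctZplus3} and $\lambda_1=x_1-\lambda_3$, $\lambda_2=x_2-\lambda_3$ from \eqref{eq:disjunctZplus2}, and finally set $z_i=\lambda_3 w_i$. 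This yields precisely the stated inequalities, with $-2z_1z_2$ in the last fraction in place of $+2z_1z_2$, the bounds $\max\{0,x_1+x_2-1\}\le\lambda\le\min\{x_1,x_2\}$, and the bounds $0\le z_i\le y_i$ arising from $w_i\ge 0$ and from the nonnegativity of $u,v$, respectively.

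Two points then separate the $\set Z_-$ case from the $\set Z_+$ case, and both are consequences of the single sign flip. Since $d\in\set D$, the matrix $\left(\begin{smallmatrix} d_1 & -1 \\ -1 & d_2\end{smallmatrix}\right)$ is PSD, so $d_1z_1^2-2z_1z_2+d_2z_2^2$ is a convex quadratic; this keeps the right-hand side jointly convex and conic-quadratic-representable, with the division-by-zero convention handling $\lambda=0$ and $x_i=\lambda$. The structural difference is which of the two bounds $0\le z_i\le y_i$ survives. In Proposition~\ref{prop:extendedPos} the upper bound $z_i\le y_i$ (equivalently $\lambda_3 w_i\le y_i$) was redundant because, with a \emph{positive} cross term, decreasing $w_i$ never increases the right-hand side of \eqref{eq:posDisjunctD}: the derivative in $w_1$ contains the term $\lambda_3(2d_1w_1+2w_2)\ge 0$. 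For $\set Z_-$ that monotonicity breaks, since the \emph{negative} cross term turns this into $\lambda_3(2d_1w_1-2w_2)$, which may be negative; hence the upper bounds $z_i\le y_i$ are genuinely needed and must be retained. Conversely, I would argue that now the \emph{lower} bounds $z_i\ge 0$ are the redundant ones, which is exactly why the statement records $z\in\R^2$ rather than $z\in\R_+^2$.

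The crux is therefore justifying that $z\ge 0$ can be dropped. Because passing from $z\in\R_+^2$ to $z\in\R^2$ only enlarges the existentially quantified set, it suffices to show that whenever $(x,y,t)$ admits a feasible $z\in\R^2$ it admits a nonnegative one with no larger right-hand side; equivalently, that the minimizer of the convex objective over $\{z\le y\}$ is automatically nonnegative. Writing the first-order conditions, the unconstrained minimizer has components of the form $z_1=y_1-\tfrac{(x_1-\lambda)(y_1-y_2)}{x_1+x_2-\lambda}$ (illustrated here for $d_1=d_2=1$), whose numerator simplifies to $y_1x_2+y_2(x_1-\lambda)\ge 0$ using $y\ge 0$ and $\lambda\le\min\{x_1,x_2\}$; imposing the box constraint $z\le y$ only pushes active components up to $y_i\ge 0$, preserving nonnegativity. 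I expect this nonnegativity-of-the-minimizer verification — together with checking that the $z\le y$ constraints can be active, so that they cannot be dropped — to be the main obstacle, since it is precisely where the negative cross term makes the analysis differ from the cleaner $\set Z_+$ argument. Finally, as in Proposition~\ref{prop:extendedPos}, the closure is obtained by noting the representation is closed under the stated division convention, and the claim that this formulation is smaller than the one of \citet{atamturk2018signal} follows by inspection of the number of auxiliary variables.
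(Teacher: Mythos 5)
You have proved the wrong statement. Your disjunctive derivation (replaying Proposition~\ref{prop:extendedPos} with the sign of the cross term flipped) produces the $(\lambda,z)$ extended formulation of $\cl\conv(\set Z_-)$ given in Remark~\ref{remark:extendedNeg} --- the paper's \emph{new}, smaller formulation. The proposition you were asked to prove is the formulation of \citet{atamturk2018signal}, which is a different system in eight auxiliary variables $t_1,t_2,q_1,q_2,v_1,v_2,w_1,w_2$: it encodes the two extremal decompositions $d_1y_1^2-2y_1y_2+d_2y_2^2=d_1(y_1-y_2/d_1)^2+(d_2-1/d_1)y_2^2=d_2(y_1/d_2-y_2)^2+(d_1-1/d_2)y_1^2$, strengthens the separable residuals by the perspective bounds $y_i^2\le t_ix_i$ and the rank-one pieces by the convex hull of $\set X_-$ (the $v$- and $w$-blocks, where incidentally ``$v_2^2\le 1_1x_2$'' in the statement is a typo for $v_2^2\le q_1x_2$), and asserts that the two resulting ``extremal'' inequalities $d_1q_1+t_2(d_2-1/d_1)\le t$ and $d_2q_2+t_1(d_1-1/d_2)\le t$, together with the bound constraints, already describe $\cl\conv(\set Z_-)$. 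Your argument never introduces these variables or inequalities, and nothing in it connects your $(\lambda,z)$ description to this system.

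The missing content is exactly the hard part of the cited result: \emph{sufficiency} of the two decomposition-based inequalities. Validity is routine (each block is obtained from a valid decomposition plus known convex hulls of the univariate and rank-one pieces), but showing that every point satisfying the system lies in $\cl\conv(\set Z_-)$ requires either comparing the system against the explicit envelope $f^*_-$ --- whose four pieces correspond precisely to the two decompositions --- or projecting your extended formulation onto the stated system and proving the two sets coincide; you attempt neither. Nor can this step be waved through as a sign-flipped copy of the $\set Z_+$ analysis: the paper's Example~\ref{ex:difference} shows the exactly analogous claim for $\set Z_+$ is \emph{false}, i.e., the counterpart inequalities \eqref{eq:decomp} from the two decompositions are strictly weaker than $\cl\conv(\set Z_+)$. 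So the sufficiency for $\set Z_-$ is a genuinely special structural fact that demands its own proof. (Two smaller remarks: your redundancy analysis of $z\ge 0$ versus $z\le y$ is directionally right for Remark~\ref{remark:extendedNeg}, though the constrained minimizer is pushed \emph{down} to $y_i$, not up; and your closing sentence, that the formulation you derived ``is smaller than the one of \citet{atamturk2018signal},'' itself confirms that you established the paper's new formulation rather than the cited proposition.)
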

\end{remark}

\todo{We may want to state the extended formulation from [5] for $Z_-$ for completeness instead of the effusive remark in nice structural results above. In [5] $Z_-$ and $X_-$ are interchanged, we may want to keep notation consistent to prevent confusion.}
}

\subsection{Description in the original space of variables $x,y,t$}\label{sec:convexHullDescription}
The purpose of this section is to express $\cl\conv(\set Z_+)$ and $\cl\conv(\set X_+)$ in the original space. 

Let $\Lambda:=\left\{\lambda\in\R:\max\{0,x_1+x_2-1\}\le\lambda\le \min\{x_1,x_2\}\right\}$, i.e., the set of
 feasible $\lambda$ implied by constraint \eqref{eq:posDisjunctA}. Define 
\[
G(\lambda,w):=\frac{d_1(y_1-\lambda w_1)^2}{x_1-\lambda}+\frac{d_2(y_2-\lambda w_2)^2}{x_2-\lambda}+\lambda(d_1w_1^2+2w_1w_2+d_2w_2^2)
\]
and $g:\Lambda\rightarrow\R$ as
\[g(\lambda):=\min\limits_{w\in \R_+^2} G(\lambda,w).\]
Note that as $G$ is SOCP-representable, it is convex.
We first prove an auxiliary lemma that will be used in the derivation.
\begin{lemma}\label{lem:monotonicity}
	Function $g$ is  non-decreasing over $\Lambda$.
\end{lemma}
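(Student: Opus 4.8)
The plan is to reduce the claim to a \emph{pointwise} monotonicity statement about the objective and then pass to the minimum. Concretely, I will show that for each \emph{fixed} $w\in\R_+^2$ the univariate map $\lambda\mapsto G(\lambda,w)$ is non-decreasing on the interior of $\Lambda$. Once this is in hand, the lemma follows immediately, because the pointwise minimum of a family of non-decreasing functions is again non-decreasing: for $\lambda\le\lambda'$ and any minimizer $w^\star$ of $G(\lambda',\cdot)$, monotonicity of $G(\cdot,w^\star)$ gives
\[
g(\lambda')=G(\lambda',w^\star)\ge G(\lambda,w^\star)\ge \min_{w\in\R_+^2} G(\lambda,w)=g(\lambda).
\]

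To prove the pointwise statement I will differentiate $G$ in $\lambda$ with $w$ held fixed. Introducing the shorthand $\alpha_i:=\frac{d_i(y_i-\lambda w_i)}{x_i-\lambda}$ for $i=1,2$, a direct differentiation of each rational term (using the quotient rule) yields
\[
\frac{\partial G}{\partial\lambda}(\lambda,w)=\frac{\alpha_1^2}{d_1}-2w_1\alpha_1+\frac{\alpha_2^2}{d_2}-2w_2\alpha_2+d_1w_1^2+2w_1w_2+d_2w_2^2.
\]
Completing the square separately in $\alpha_1$ and $\alpha_2$ causes the pure $w_1^2$ and $w_2^2$ contributions to cancel and collapses the expression to
\[
\frac{\partial G}{\partial\lambda}(\lambda,w)=\frac{(\alpha_1-d_1w_1)^2}{d_1}+\frac{(\alpha_2-d_2w_2)^2}{d_2}+2w_1w_2,
\]
which is manifestly nonnegative, since $d_1,d_2>0$ (the constraint $d\in\set D$ forces $d_1d_2\ge 1$) and $w_1,w_2\ge 0$. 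Hence $G(\cdot,w)$ is non-decreasing on the interior of $\Lambda$ for every fixed $w\ge 0$, which is exactly what the reduction requires.

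The only additional care is at the boundary of $\Lambda$, where one of $x_1-\lambda$, $x_2-\lambda$, or $\lambda$ vanishes and $G$ may equal $+\infty$ under the division-by-zero convention; monotonicity across the interior together with the lower-semicontinuity implied by that convention extends the conclusion to the endpoints. The main obstacle is purely computational: organizing the $\lambda$-derivative so that the cross terms cancel. The completion of squares above is the crux, and it is what turns an otherwise opaque rational expression into an obviously nonnegative one; notably, it shows monotonicity holds termwise in $w$ and so never requires invoking optimality of the inner minimizer.
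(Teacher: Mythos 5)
Your proposal is correct and follows essentially the same route as the paper's proof: fix $w$, differentiate $G$ in $\lambda$, and rewrite the derivative as $\frac{d_1(w_1x_1-y_1)^2}{(x_1-\lambda)^2}+\frac{d_2(w_2x_2-y_2)^2}{(x_2-\lambda)^2}+2w_1w_2\ge 0$ (your completion of squares in $\alpha_i$ yields exactly this expression), then pass from pointwise monotonicity to monotonicity of the minimum via the same chain $g(\lambda)\le G(\lambda,w^\star)\le G(\lambda',w^\star)=g(\lambda')$.
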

\begin{proof}Note that for any fixed $w$ and $\lambda<\min\{ x_1,x_2 \}$, we have
	\begin{align*}
	\frac{\partial G(\lambda,w)}{\partial \lambda}= \ &\frac{d_1[2(\lambda w_1-y_1)w_1(x_1-\lambda)+(y_1-\lambda w_1)^2]}{(x_1-\lambda)^2}\\
	&+\frac{d_2[2(\lambda w_2-y_2)w_2(x_2-\lambda)+(y_2-\lambda w_2)^2]}{(x_2-\lambda)^2}\\
	&+(d_1w_1^2+2w_1w_2
	+d_2w_2^2)\\
	= \ &\frac{d_1[w_1^2(x_1-\lambda)^2+2(\lambda w_1-y_1)w_1(x_1-\lambda)+(y_1-\lambda w_1)^2]}{(x_1-\lambda)^2}\\
	&+\frac{d_2[w_2^2(x_2-\lambda_2)^2+2(\lambda w_2-y_2)w_2(x_2-\lambda)+(y_2-\lambda w_2)^2]}{(x_2-\lambda)^2}\\
	&+2w_1w_2\\
	= \ &\frac{d_1(w_1x_1-y_1)^2}{(x_1-\lambda)^2}+\frac{d_2(w_2x_2-y_2)^2}{(x_2-\lambda)^2}+2w_1w_2\\
	\ge \ &0.
	\end{align*}
	Therefore, for fixed $w$, $G(\cdot,w)$ is nondecreasing. Now for  $\tilde{\lambda}\le \hat{\lambda}$, let $\tilde w$ and $\hat w$ be 
	optimal solutions defining $g(\tilde{\lambda})$ and $g(\hat{\lambda})$. Then,
	\[g(\tilde\lambda)=G(\tilde\lambda,\tilde w)\le G(\tilde\lambda,\hat w)\le G(\hat\lambda,\hat w)=g(\hat\lambda),\]
	proving the claim.
	\ignore{
	\[  \min_{w\in\R_+^2}\;G(\tilde\lambda,w)\le\min_{w\in\R_+^2}\;G(\hat\lambda,w).\]
	
	 this is, $G(\tilde{\lambda},{w})\le G(\hat{\lambda},{w})$ holds for any $\tilde{\lambda}\le \hat{\lambda}$. Since $w$ is taken arbitrarily, it holds that 
	\[  \min_{w\in\R_+^2}\;G(\tilde\lambda,w)\le\min_{w\in\R_+^2}\;G(\hat\lambda,w).\]
	Namely, we find that $g(\tilde{\lambda})\le g(\hat{\lambda})$.
	\todo{************** For $\tilde{\lambda}$ and $\hat{\lambda}$ one may have different $w$'s optimal. Not sure how to compare $G(\tilde{\lambda},\tilde{w})$ and $G(\hat{\lambda},\hat {w})$\\ Shaoning: I add more details. It can also be obtained from $g(\tilde\lambda)=G(\tilde\lambda,\tilde w)\le G(\tilde\lambda,\hat w)\le G(\hat\lambda,\hat w)=g(\hat\lambda)$.}
}
\end{proof}
%To get the explicit form of $g(\lambda)$, we assume for simplicity that $(x,y,w)$ is nondegenerate, i.e. $0<x_1,x_2<1,y>0,w>0$, and the result is also applicablewhen inequaliteis are not necessarily strict.

We now state and prove the main result in this subsection.
\begin{proposition}\label{prop:originalPos}
	Define 
	\[ f(x,y,\lambda;d) := \frac{(d_1d_2-1)(d_1x_2y_1^2+d_2x_1y_2^2)+2\lambda d_1d_2y_1y_2+\lambda(d_1y_1^2+d_2y_2^2)}{(d_1d_2-1)x_1x_2-\lambda^2+\lambda(x_1+x_2)},\]
	and 
	\[f_+^*(x,y;d) :=\begin{cases}
	\frac{d_1y_1^2}{x_1}+\frac{d_2y_2^2}{x_2}&\text{if }x_1+x_2\le1\\
	\frac{d_2y_2^2}{x_2}+\frac{d_1y_1^2}{1-x_2}&\text{if }0\le x_1+x_2-1\le (x_1y_2-d_1x_2y_1)/y_2\\
	\frac{d_1y_1^2}{x_1}+\frac{d_2y_2^2}{1-x_1}&\text{if }0\le x_1+x_2-1\le (x_2y_1-d_2x_1y_2)/y_1\\
	f(x,y,x_1+x_2-1)&\text{o.w.}
	%	\frac{d_1d_2(d_1x_2y_1^2+d_2x_1y_2^2)+2d_1d_2y_1y_2(x_1+x_2-1)-d_1(1-x_1)y_1^2-d_2(1-x_2)y_2^2}{(d_1d_2-1)x_1x_2+x_1+x_2-1}&\text{o.w..}
	\end{cases} \]
	Then, the set $\cl\conv(\set Z_+)$ can be expressed as
	\[ \cl\conv(\set Z_+)=\{ (x,y,t)\in[0,1]^2\times\R^3_+:t\ge f_+^*(x_1,x_2,y_1,y_2;d_1,d_2) \}. \]	
\end{proposition}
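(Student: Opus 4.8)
The plan is to read the original-space description directly off the extended formulation of Proposition~\ref{prop:extendedPos} by performing the inner minimization in closed form. Writing $z=\lambda w$, the right-hand side of the epigraph constraint in Proposition~\ref{prop:extendedPos} is exactly $G(\lambda,w)$, so a point $(x,y,t)\in[0,1]^2\times\R_+^3$ lies in $\cl\conv(\set Z_+)$ if and only if $t\ge \min_{\lambda\in\Lambda} g(\lambda)$, where $g(\lambda)=\min_{w\in\R_+^2}G(\lambda,w)$. By Lemma~\ref{lem:monotonicity}, $g$ is non-decreasing on $\Lambda=[\max\{0,x_1+x_2-1\},\min\{x_1,x_2\}]$, so this minimum is attained at the left endpoint $\lambda^*:=\max\{0,x_1+x_2-1\}$. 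Hence it suffices to show $g(\lambda^*)=f_+^*(x,y;d)$, which I would establish by computing $g(\lambda^*)$ in the two regimes $x_1+x_2\le 1$ and $x_1+x_2>1$.

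In the first regime $\lambda^*=0$. By the division-by-zero convention, the term $(d_1z_1^2+2z_1z_2+d_2z_2^2)/\lambda$ forces $z=0$ at $\lambda=0$, leaving $g(0)=\frac{d_1y_1^2}{x_1}+\frac{d_2y_2^2}{x_2}$, which is the first branch of $f_+^*$. In the second regime $\lambda^*=x_1+x_2-1>0$, and since $\lambda^*>0$ minimizing over $w\ge 0$ is equivalent to minimizing the convex quadratic $H(z):=\frac{d_1(y_1-z_1)^2}{x_1-\lambda^*}+\frac{d_2(y_2-z_2)^2}{x_2-\lambda^*}+\frac{d_1z_1^2+2z_1z_2+d_2z_2^2}{\lambda^*}$ over $z\in\R_+^2$. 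Using the identities $x_1-\lambda^*=1-x_2$ and $x_2-\lambda^*=1-x_1$, the stationarity conditions of the unconstrained problem form the linear system $\bigl(\begin{smallmatrix} d_1x_1 & 1-x_2\\ 1-x_1 & d_2x_2\end{smallmatrix}\bigr)z=\lambda^*\bigl(\begin{smallmatrix} d_1y_1\\ d_2y_2\end{smallmatrix}\bigr)$, whose determinant equals $(d_1d_2-1)x_1x_2+\lambda^*$, precisely the denominator of $f$ evaluated at $\lambda=\lambda^*$, and strictly positive because $d_1d_2\ge 1$ and $\lambda^*>0$.

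Solving by Cramer's rule, the unconstrained minimizer satisfies $z_1\ge 0\iff d_1x_2y_1\ge(1-x_2)y_2$ and $z_2\ge 0\iff d_2x_1y_2\ge(1-x_1)y_1$; rearranging $z_1\le 0$ yields exactly the threshold condition of the second branch and $z_2\le 0$ that of the third branch. When both coordinates are nonnegative the unconstrained optimum is feasible, and substituting it back into $H$ gives, after simplification, $g(\lambda^*)=f(x,y,\lambda^*;d)$ (the last branch). When $z_1<0$ (resp.\ $z_2<0$) I project onto the face $z_1=0$ (resp.\ $z_2=0$) and minimize the remaining one-dimensional expression; using $(1-x_2)+\lambda^*=x_1$ and $(1-x_1)+\lambda^*=x_2$ this collapses to $\frac{d_1y_1^2}{1-x_2}+\frac{d_2y_2^2}{x_2}$ (resp.\ $\frac{d_1y_1^2}{x_1}+\frac{d_2y_2^2}{1-x_1}$), matching the second (resp.\ third) branch. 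Finally, positivity of the determinant rules out $z_1<0$ and $z_2<0$ occurring simultaneously, so the four branches are mutually exclusive and exhaustive.

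The main obstacle is the constrained quadratic minimization in the second regime: the active-set analysis for the nonnegativity constraints must be carried out carefully, the algebraic thresholds produced by Cramer's rule must be verified to coincide exactly with the stated branch conditions, and one must confirm the branches partition the parameter region (in particular, excluding the doubly-active case via $\det>0$). A secondary technical point is treating the boundary of $[0,1]^2$, where some denominators vanish, consistently with the division-by-zero convention, so that the explicit formula genuinely describes the closure $\cl\conv(\set Z_+)$ and not merely its relative interior.
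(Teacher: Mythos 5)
Your proposal is correct and takes essentially the same route as the paper's proof: project the extended formulation of Proposition~\ref{prop:extendedPos}, invoke Lemma~\ref{lem:monotonicity} to fix $\lambda=\max\{0,x_1+x_2-1\}$, and resolve the inner constrained quadratic minimization by a KKT/active-set case analysis whose three cases and thresholds coincide exactly with the paper's. The only cosmetic differences are that you specialize $\lambda$ to the left endpoint before (rather than after) the case analysis and phrase the cases via signs of the unconstrained Cramer's-rule solution instead of signs of the KKT multipliers $s_1,s_2$ — an equivalent primal view of the same argument.
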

\begin{proof}
	 First, observe that we may assume $x_1,x_2>0$, as otherwise
	 $x_1 + x_2 \le 1$ and $f^*_+$ reduces to the perspective function for the univeriate case.
%	  $0\le\lambda\le\min\{x_1,x_2\}=0$, which implies $\lambda=0$, and the conclusion follows trivially. 
	 To find the representation in the original space of variables, we first project out variables $z$ in Proposition~\ref{prop:extendedPos}. Specifically, notice that $g(\lambda)$ can be rewritten in the following form by letting $z_i=\lambda w_i,i=1,2$: 
	\begin{align}
	g(\lambda)=\min\;\;&\frac{d_1(y_1-z_1)^2}{x_1-\lambda}+\frac{d_2(y_2-z_2)^2}{x_2-\lambda}+\frac{d_1z_1^2+2z_1z_2+d_2z_2^2}{\lambda}\label{eq:gLambda}\\
	\text{s.t. }& z_i\ge0,\;\; i=1,2.\tag{$s_i$}
	\end{align}
	By Proposition~\ref{prop:extendedPos}, a point $(x,y,t)\in[0,1]^2\times\R_+^3$ belongs to $\cl\conv(\set Z_+)$ if and only if $t\ge \min_{\lambda\in \Lambda}\;g(\lambda)$.
	For given $\lambda\in \Lambda$, optimization problem \eqref{eq:gLambda} is convex with affine constraints, thus Slater condition holds. Hence, the following KKT conditions are necessary and sufficient for the minimizer:
	\begin{subequations}
		\begin{align}
		&\frac{2d_1}{x_1-\lambda}(z_1-y_1)+\frac{2(d_1z_1+z_2)}{\lambda}-s_1=0\label{eq:diff1}\\
		&\frac{2d_2}{x_2-\lambda}(z_2-y_2)+\frac{2(d_2z_2+z_1)}{\lambda}-s_2=0\label{eq:diff2}\\
		&z_1s_1=0\label{eq:complement1}\\
		&z_2s_2=0\label{eq:complement2}\\
		&s_i,z_i\ge0,\;\; i=1,2.
		\end{align}
	\end{subequations}
	Let us analyze the KKT system considering the positiveness of $s_1$ and $s_2$. 
	\begin{itemize}
		\item \textit{Case $s_1>0$.} By \eqref{eq:complement1}, $z_1=0$ and by \eqref{eq:diff1}, $z_2>0$, which implies $s_2=0$ from \eqref{eq:complement2}.  Hence, \eqref{eq:diff1} and \eqref{eq:diff2} reduce to
		\begin{align*}
		&\frac{2z_2}{\lambda}=\frac{2d_1}{x_1-\lambda}y_1+s_1\\
		&\frac{2d_2}{x_2-\lambda}(z_2-y_2)+\frac{2d_2z_2}{\lambda}=0.
		\end{align*}
		Solving these two linear equations, we get $z_2=\frac{y_2}{x_2}\lambda$ and $s_1=2(\frac{y_2}{x_2}-\frac{d_1y_1}{x_1-\lambda})$. This also indicates $s_1\ge 0$ iff $\lambda\le (x_1y_2-d_1x_2y_1)/y_2$. By replacing the variables with their optimal values in the objective function \eqref{eq:gLambda}, we find that
		\begin{subequations}\label{eq:explicitForm1}
			\begin{align}
			g(\lambda)=&\frac{d_1y_1^2}{x_1-\lambda}+\frac{d_2}{x_2-\lambda}\bigg (y_2-\frac{y_2}{x_2}\lambda \bigg )^2+\frac{d_2}{\lambda} \bigg (\frac{y_2}{x_2}\lambda \bigg)^2\\
%			=&\frac{d_1y_1^2}{x_1-\lambda}+\frac{d_2y_2^2(x_2-\lambda)}{x_2^2}+\frac{\lambda d_2y_2^2}{x_2^2}\\
			=&\frac{d_1y_1^2}{x_1-\lambda}+\frac{d_2y_2^2}{x_2}
			\end{align}
		\end{subequations}
		when $\lambda\in[0, (x_1y_2-d_1x_2y_1)/y_2]\cap \Lambda$.
		
		\item \textit{Case $s_2>0$.} 
		Similarly, we find that
		\begin{equation}\label{eq:explicitForm2}
		g(\lambda)=\frac{d_1y_1^2}{x_1}+\frac{d_2y_2^2}{x_2-\lambda}
		\end{equation}
		when $\lambda\in[0, (x_2y_1-d_2x_1y_2)/y_1]\cap \Lambda$.
		
		\item \textit{Case $s_1=s_2=0$.} 
		In this case, \eqref{eq:diff1} and \eqref{eq:diff2} reduce to
		\[\begin{pmatrix}
		d_1x_1&x_1-\lambda\\
		x_2-\lambda&d_2x_2
		\end{pmatrix}\begin{pmatrix}
		z_1\\z_2
		\end{pmatrix}=\lambda\begin{pmatrix}
		d_1y_1\\d_2y_2
		\end{pmatrix}. \]
		If $\lambda>0$, the determinant of the matrix is $(d_1d_2-1)x_1x_2+\lambda(x_1+x_2-\lambda)>0$ and the system has a unique solution. It follows that 
		%\todo{Is the matrix invertable?\\ Shaoning: At the beginning of the proof, I add the assumption that $x>0$. Under this assumption, we don't need to worry about the singularity of the matrix. }
		\[\begin{pmatrix}
		z_1\\z_2
		\end{pmatrix}=\lambda\begin{pmatrix}
		d_1x_1&x_1-\lambda\\
		x_2-\lambda&d_2x_2
		\end{pmatrix}^{-1}\begin{pmatrix}
		d_1y_1\\d_2y_2
		\end{pmatrix}, \]
		i.e.,
		\[ z_1=\frac{\lambda(d_1d_2x_2y_1+(\lambda-x_1)d_2y_2)}{(d_1d_2-1)x_1x_2-\lambda^2+\lambda(x_1+x_2)},\]
		\[ z_2=\frac{\lambda(d_1d_2x_1y_2+(\lambda-x_2)d_1y_1)}{(d_1d_2-1)x_1x_2-\lambda^2+\lambda(x_1+x_2)}.\]
		Therefore, the bounds $z_1, z_2\ge0$ imply lower bounds
		\[\lambda\ge (x_1y_2-d_1x_2y_1)/y_2, \quad \lambda\ge (x_2y_1-d_2x_1y_2)/y_1 \]
		on $\lambda$. Moreover, from \eqref{eq:diff1} and \eqref{eq:diff2}, we have \[ \frac{d_1(y_1-z_1)}{x_1-\lambda}=\frac{d_1z_1+z_2}{\lambda} \text{ and } \frac{d_2(y_2-z_2)}{x_2-\lambda}=\frac{d_2z_2+z_1}{\lambda} \cdot \]
		By substituting the two equalities in \eqref{eq:gLambda}, we find that
		\begin{align*}
%		g(\lambda) =& \frac{d_1z_1+z_2}{\lambda}(y_1-z_1)+\frac{d_2z_2+z_1}{\lambda}(y_2-z_2)+\frac{d_1z_1^2+2z_1z_2+d_2z_2^2}{\lambda}\\
	g(\lambda) 		=&\big (d_1y_1z_1+y_1z_2+d_2y_2z_2+y_2z_1 \big )/\lambda\\
%		=&\frac{1}{\lambda}\begin{pmatrix}
%		d_1y_1+y_2&d_2y_2+y_1
%		\end{pmatrix}\begin{pmatrix}
%		z_1\\z_2
%		\end{pmatrix}\\
%		=&\begin{pmatrix}
%		d_1y_1+y_2&d_2y_2+y_1
%		\end{pmatrix}\begin{pmatrix}
%		d_1x_1&x_1-\lambda\\
%		x_2-\lambda&d_2x_2
%		\end{pmatrix}^{-1}\begin{pmatrix}
%		d_1y_1\\d_2y_2
%		\end{pmatrix}\\
		=&  \frac{(d_1d_2-1)(d_1x_2y_1^2+d_2x_1y_2^2)+2\lambda d_1d_2y_1y_2+\lambda(d_1y_1^2+d_2y_2^2)}{(d_1d_2-1)x_1x_2-\lambda^2+\lambda(x_1+x_2)}.
%		=&f(x,y,\lambda).
		\end{align*}
		By the convention of division by $0$, the above discussion for this case is also applicable when $\lambda=0$. Therefore, 
		\begin{equation}\label{eq:nasty}
		g(\lambda)=f(x,y,\lambda;d)
		\end{equation} 
		when  $\lambda\in[ \max\{ (x_1y_2-d_1x_2y_1)/y_2,(x_2y_1-d_2x_1y_2)/y_1,+\infty \}\cap \Lambda ].$	
	\end{itemize}

	To see that the three pieces of $g(\lambda)$ considered above are, indeed, mutually exclusive, observe that
	when $\lambda\le(x_1y_2-d_1x_2y_1)/y_2$, this is, $\frac{y_2(x_1-\lambda)}{x_2y_1}\ge d_1 $, we have $\frac{d_2y_2}{y_1}\frac{x_1-\lambda}{x_2}\ge d_1d_2\ge1 $. Since $\frac{x_1-\lambda}{x_2}\frac{x_2-\lambda}{x_1}\le \frac{x_1}{x_2}\frac{x_2}{x_1}=1$, it holds $\frac{d_2y_2}{y_1}\frac{x_1-\lambda}{x_2}\ge\frac{x_1-\lambda}{x_2}\frac{x_2-\lambda}{x_1},$ that is, $\lambda\ge (x_2y_1-d_2x_1y_2)/y_1$. 
	
	Finally, notice when $\lambda =0$, \eqref{eq:explicitForm1}, \eqref{eq:explicitForm2}, and \eqref{eq:nasty} 
	reduce to
	\[ g(0)=\frac{d_1y_1^2}{x_1}+\frac{d_2y_2^2}{x_2} \cdot \] 
	By Lemma~\ref{lem:monotonicity}, $\min_{\lambda\in \Lambda}g(\lambda)=g(\max\{0,x_1+x_2-1 \})$.  Combining this fact with the above discussion, Proposition~\ref{prop:originalPos} holds.
\end{proof}

\begin{remark} 
For further intuition, we now comment on the validity of each piece of $t\ge f^*_+(x,y;d)$ over $[0,1]^2\times \R^3_+$ for $\set Z_+$. Because the first piece can be obtained by dropping the nonnegative cross product term $y_1y_2$ and then strengthening $t\ge y_1^2+y_2^2$ using perspective reformulation, it is valid everywhere. When $x_1+x_2<1$ and $y_1,y_2>0$, $t\ge y_i^2/x_i+y_j^2/(1-x_i)>f^*_+(x,y;1,1)$ for $i\neq j$. Therefore, the second and the third pieces are not valid on the domain  $[0,1]^2\times \R^3_+$. 
%We have the following remark for the validity of the last piece. 

	If $d_1d_2>1$, the last piece $t\ge f(x,y,x_1+x_2-1;d)$ is not valid for $\cl\conv(\set Z_+)$ everywhere, as seen by exhibiting a point $(x,y,t)\in\cl\conv(\set Z_+)$ violating $t\ge f(x,y,x_1+x_2-1;d)$. To do so, let
	\[ (x_1,x_2,y_1,y_2,t) = (0.5, \frac{1}{d_1d_2+1}+\epsilon, \frac{1}{\sqrt{d_1}},2\sqrt{d_1},f^*_+(x,y)), \]
	where $\epsilon>0$ is small enough so that $x_1+x_2<1$, i.e., $x_2<0.5$. With this choice, $f^*_+(x,y)=d_1y_1^2/x_1+d_2y_2^2/x_2.$ Let $\tilde{\lambda}=x_1+x_2-1$, then $\tilde\lambda(x_1+x_2)-\tilde\lambda^2=\tilde\lambda$. Hence, for point $(x,y,t)$, we have
	\begin{align*}
	f(x,y,\tilde\lambda;d)=&\frac{(d_1d_2-1)(d_1x_2y_1^2+d_2x_1y_2^2)+2\tilde\lambda d_1d_2y_1y_2+\tilde\lambda(d_1y_1^2+d_2y_2^2)}{(d_1d_2-1)x_1x_2+\tilde\lambda}\\
	=& \frac{(d_1d_2-1)x_1x_2(d_1y_1^2/x_1+d_2y_2^2/x_2)+\tilde\lambda (d_1y_1^2+2d_1d_2y_1y_2+d_2y_2^2)}{(d_1d_2-1)x_1x_2+\tilde\lambda}\\
	=&(1-\alpha)f_+^*(x,y)+\alpha(d_1y_1^2+2d_1d_2y_1y_2+d_2y_2^2),
	\end{align*}
	where $\alpha = \tilde\lambda/((d_1d_2-1)x_1x_2+\tilde\lambda)$. Since $\tilde\lambda<0$, $\alpha<0$ if and only if 
	\begin{align*}
	&(d_1d_2-1)x_1x_2+x_1+x_2-1>0\\	
	\iff&d_1d_2>\frac{(1-x_1)(1-x_2)}{x_1x_2}=\frac{1}{x_2}-1\quad\text{(by $x_1=0.5$)}\\
	\iff&x_2>\frac{1}{d_1d_2+1},
	\end{align*}
	which is true by the choice of $x_2$. Moreover,
	\begin{align*}
	&f^*_+(x,y)=d_1y_1^2/x_1+d_2y_2^2/x_2=2+8d_1d_2\\
	>\,&d_1y_1^2+2d_1d_2y_1y_2+d_2y_2^2=1+8d_1d_2.
	\end{align*}
	This indicates
	$f(x,y,\tilde\lambda;d)> (1-\alpha)f_+^*(x,y)+\alpha f_+^*(x,y)=f_+^*(x,y)=t,$ that is,
	$t\ge f(x,y,x_1+x_2-1;d)$ is violated.
\end{remark}

Observe that if $d_1 d_2 =1$, then $f(x,y,x_1+x_2-1;d)$ reduces to the original quadratic
$d_1 y_1^2 + 2y_1 y_2 + d_2y_2$. Otherwise, although $t\ge f(x,y,x_1+x_2-1;d)$ appears complicated,
the next proposition implies that it is convex over its restricted domain and can, in fact, 
be stated as an SDP constraint.
%\begin{lemma}
%	The following inequality is valid for $\cl\conv(Z_+)$
%	\[ t\ge f(x,y,\lambda;d),\exists\lambda\in \Lambda. \]
%\end{lemma}
%\begin{proof}
%	The conclusion follows from $$f(x,y,\lambda;d)=\min_{w\in\R^2}G(\lambda,w)\le \min_{w\in\R^2_+}G(\lambda,w)=g(\lambda)$$.
%\end{proof}

\begin{proposition}
	If $d_1 d_2 > 1$ and $x_1+x_2-1>0$, then $t\ge f(x,y,x_1+x_2-1;d)$ can be rewritten as the SDP constraint
	\[ \begin{pmatrix}
	{t}/({d_1d_2-1}) &  y_1 &y_2\\
	y_1 & d_2x_1+x_2/d_1-1/d_1 & {-x_1-x_2+1}\\
	y_2& {-x_1-x_2+1}&x_1/d_2+d_1x_2-1/d_2\\
	\end{pmatrix}\succeq 0. \]
\end{proposition}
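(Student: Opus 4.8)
The plan is to read the displayed matrix as a $1+2$ block matrix and invoke the Schur complement lemma with respect to its lower-right $2\times 2$ block. Writing $\lambda := x_1+x_2-1$ and
\[
M:=\begin{pmatrix}
t/(d_1d_2-1) & y_1 & y_2\\
y_1 & A & -\lambda\\
y_2 & -\lambda & C
\end{pmatrix},\qquad
N:=\begin{pmatrix} A & -\lambda\\ -\lambda & C\end{pmatrix},
\]
with $A:=d_2x_1+(x_2-1)/d_1$ and $C:=(x_1-1)/d_2+d_1x_2$ (so that the off-diagonal entry $-x_1-x_2+1$ equals $-\lambda$), the goal is to prove that $M\succeq 0$ if and only if $t\ge f(x,y,\lambda;d)$.

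First I would record that the hypotheses force $x_1,x_2>0$: indeed $x_1+x_2>1$ together with $x_2\le 1$ gives $x_1>0$, and symmetrically $x_2>0$. I would then show $N\succ 0$. The $(1,1)$ entry satisfies $A=d_2x_1-(1-x_2)/d_1>0$, since $d_1d_2x_1+x_2\ge x_1+x_2>1$ (using $d_1d_2>1$ and $x_1>0$) rearranges to $d_1d_2x_1>1-x_2$. For the determinant, a direct expansion---using the identity $-\lambda^2+\lambda(x_1+x_2)=\lambda$ valid precisely because $\lambda=x_1+x_2-1$---yields
\[
\det N=\frac{d_1d_2-1}{d_1d_2}\,\bigl[(d_1d_2-1)x_1x_2+x_1+x_2-1\bigr]=\frac{d_1d_2-1}{d_1d_2}\,D,
\]
where $D$ denotes the denominator $(d_1d_2-1)x_1x_2-\lambda^2+\lambda(x_1+x_2)$ of $f$. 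Since $d_1d_2>1$ and $x_1+x_2-1>0$ give $D>0$, we obtain $\det N>0$, so $N\succ 0$.

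With $N\succ 0$ the Schur complement lemma reduces $M\succeq 0$ to the single scalar inequality $t/(d_1d_2-1)\ge (y_1,y_2)N^{-1}(y_1,y_2)'$. Using $N^{-1}=\tfrac{1}{\det N}\begin{pmatrix} C & \lambda\\ \lambda & A\end{pmatrix}$, the right-hand side equals $\bigl(Cy_1^2+2\lambda y_1y_2+Ay_2^2\bigr)/\det N$, so the condition becomes
\[
t\ge (d_1d_2-1)\,\frac{Cy_1^2+2\lambda y_1y_2+Ay_2^2}{\det N}
=\frac{d_1d_2\,\bigl(Cy_1^2+2\lambda y_1y_2+Ay_2^2\bigr)}{D}.
\]
It remains to match this numerator against that of $f$. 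I would verify coefficient by coefficient that $d_1d_2\,C=(d_1d_2-1)d_1x_2+\lambda d_1$, that $d_1d_2\,A=(d_1d_2-1)d_2x_1+\lambda d_2$, and that the cross term matches $2\lambda d_1d_2$; each reduces, after substituting $\lambda=x_1+x_2-1$ and $d_1d_2/d_i=d_{3-i}$, to an identity. This shows the numerator equals $(d_1d_2-1)(d_1x_2y_1^2+d_2x_1y_2^2)+2\lambda d_1d_2y_1y_2+\lambda(d_1y_1^2+d_2y_2^2)$, exactly the numerator of $f$, completing the equivalence.

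The one genuinely delicate point is the positive-definiteness of $N$, since the reduction of $M\succeq 0$ to a clean scalar inequality is only valid there; the sign analysis of $A$ and of $\det N$ is where the hypotheses $d_1d_2>1$ and $x_1+x_2-1>0$ are actually used. The subsequent coefficient matching is routine algebra, so I expect the determinant and positivity step, rather than the final expansion, to be the main thing to get right.
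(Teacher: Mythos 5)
Your proof is correct and takes essentially the same route as the paper: both arguments hinge on Schur complementation with respect to the lower-right $2\times 2$ block, whose positive definiteness follows from the same determinant identity $\det N = \tfrac{d_1d_2-1}{d_1d_2}\bigl[(d_1d_2-1)x_1x_2+\lambda\bigr]>0$. The only difference is presentational—the paper derives the matrix from $f$ via the scaled variables $(\sqrt{d_1}y_1,\sqrt{d_2}y_2)$ and the adjugate of the resulting $2\times 2$ form, then de-scales, whereas you verify the stated unscaled matrix directly by computing $N^{-1}$ and matching coefficients.
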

\begin{proof}	
%	We first show the validity of the inequality $t\ge f(x,y,x_1+x_2-1;d)$. On the one hand, we have $f(x,y,\lambda;d)=\min_{w\in\R^2}G(\lambda,w)$, which implies if $x_1+x_2-1\ge0$, 
%	\[ f(x,y,x_1+x_2-1;d)=\min_{w\in\R^2}G(\lambda,x_1+x_2-1)\ge \min_{w\in\R_+^2}G(\lambda,x_1+x_2-1)=f^*_+(x,y). \]
%	On the other hand, 	
	Notice that for $\lambda=x_1+x_2-1> 0$, $f(x,y,\lambda;d)$ 
	can be rewritten in the form
	\[ f(x,y,\lambda;d)=\frac{1}{D}\hat{y}'A^*\hat{y}, \]
	where $D=(d_1d_2-1)x_1x_2+x_1+x_2-1>0, \hat{y}'=(\sqrt{d_1}y_1,\sqrt{d_2}y_2)$ and \[A^*=\begin{pmatrix}
	(d_1d_2-1)x_2+\lambda& \sqrt{d_1d_2}\lambda\\
	\sqrt{d_1d_2}\lambda& (d_1d_2-1)x_1+\lambda
	\end{pmatrix}. \]
	Observe $\det(A^*)=(d_1d_2-1)D$. 
	Hence,
	\[ f(x,y,\lambda;d)=\frac{(d_1d_2-1)}{\det(A^*)}\hat{y}^TA^*\hat{y}= (d_1d_2-1)\hat{y}^TA^{-1}\hat{y},\]
	where $A$ is the adjugate of $A^*$, i.e., 
	\[ A=\begin{pmatrix}
	(d_1d_2-1)x_1+\lambda& -\sqrt{d_1d_2}\lambda\\
	-\sqrt{d_1d_2}\lambda& (d_1d_2-1)x_2+\lambda
	\end{pmatrix} \cdot \] 
	Note that $A\succ0$. By Schur Complement Lemma, $t/(d_1d_2-1)\ge \hat{y}'A^{-1}\hat{y}$ if and only if 
	\[ \begin{pmatrix}
	t/(d_1d_2-1) & \hat y^T\\
	\hat y & A
	\end{pmatrix}\succeq 0, \]
	i.e.,
	\[ \begin{pmatrix}
	t/(d_1d_2-1) & \sqrt{d_1} y_1 &\sqrt{d_2}y_2\\
	\sqrt{d_1} y_1 & (d_1d_2-1)x_1+\lambda & -\sqrt{d_1d_2}\lambda\\
	\sqrt{d_2} y_2& -\sqrt{d_1d_2}\lambda& (d_1d_2-1)x_2+\lambda\\
	\end{pmatrix}\succeq 0, \]
	which is further equivalent to
	\[ \begin{pmatrix}
	t/(d_1d_2-1) &  y_1 &y_2\\
	y_1 & (d_1d_2-1)x_1/d_1+\lambda/d_1 & -\lambda\\
	y_2& -\lambda& (d_1d_2-1)x_2/d_2+\lambda/d_2\\
	\end{pmatrix}\succeq 0. \]
	The conclusion follows by taking $\lambda = x_1+x_2-1$.
\end{proof}

From Proposition~\ref{prop:originalPos}, we get the convex hull of rank-one case 
$\set X_+$ by setting $d_1 = d_2 = 1$. 
\begin{corollary}\label{prop:originalPosRank1}
	\[ \cl\conv(\set X_+)= \left\{ (x,y,t)\in[0,1 ]^2\times\R^3_+:t\ge f_{1+}(x,y)\right\},\]
	where 
	\[f_{1+}(x,y)=\begin{cases}
	\frac{y_1^2}{x_1}+\frac{y_2^2}{x_2}&\text{if }x_1+x_2\le1\\
	\frac{y_2^2}{x_2}+\frac{y_1^2}{1-x_2}&\text{if }0\le x_1+x_2-1\le (x_1y_2-x_2y_1)/y_2\\
	\frac{y_1^2}{x_1}+\frac{y_2^2}{1-x_1}&\text{if }0\le x_1+x_2-1\le (x_2y_1-x_1y_2)/y_1\\
	(y_1+y_2)^2&\text{o.w.}
	\end{cases} \]
\end{corollary}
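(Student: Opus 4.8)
The plan is to obtain the corollary as a direct specialization of Proposition~\ref{prop:originalPos} to $d_1=d_2=1$. First I would observe that $\set X_+$ is precisely $\set Z_+$ in the case $d_1=d_2=1$, since then $d_1y_1^2+2y_1y_2+d_2y_2^2=(y_1+y_2)^2$; moreover $(1,1)\in\set D$ because $1\cdot 1\ge 1$, so Proposition~\ref{prop:originalPos} is applicable. Consequently $\cl\conv(\set X_+)$ is described by $t\ge f^*_+(x,y;1,1)$, and it suffices to evaluate $f^*_+$ at $d=(1,1)$ and verify that it coincides with $f_{1+}$ branch by branch, over the same partition of $[0,1]^2$.

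For the first three branches the reduction is immediate. Setting $d_1=d_2=1$ in the numerators turns $\frac{d_1y_1^2}{x_1}+\frac{d_2y_2^2}{x_2}$, $\frac{d_2y_2^2}{x_2}+\frac{d_1y_1^2}{1-x_2}$, and $\frac{d_1y_1^2}{x_1}+\frac{d_2y_2^2}{1-x_1}$ into exactly the first three cases of $f_{1+}$, while the threshold conditions $0\le x_1+x_2-1\le (x_1y_2-d_1x_2y_1)/y_2$ and $0\le x_1+x_2-1\le (x_2y_1-d_2x_1y_2)/y_1$ reduce to their stated rank-one forms. No work beyond substitution is needed here.

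The only branch requiring care is the last one, $f(x,y,x_1+x_2-1;d)$, because at $d_1d_2=1$ the factor $(d_1d_2-1)$ multiplies both the leading term of the numerator and one term in the denominator of $f(x,y,\lambda;d)$. I would substitute $d_1=d_2=1$ directly into its closed form: the numerator collapses to $\lambda(y_1^2+2y_1y_2+y_2^2)=\lambda(y_1+y_2)^2$ and the denominator to $\lambda(x_1+x_2-\lambda)$. Evaluating at $\lambda=x_1+x_2-1$ makes the denominator equal to $\lambda$, and since the ``otherwise'' region forces $x_1+x_2-1>0$, the cancellation is legitimate and yields $f(x,y,x_1+x_2-1;1,1)=(y_1+y_2)^2$. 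This recovers exactly the reduction already noted in the remark preceding the corollary for the case $d_1d_2=1$.

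I expect the only subtle point, and hence the main obstacle, to be this degeneracy at $d_1d_2=1$: one must confirm that the apparent indeterminacy produced by the vanishing $(d_1d_2-1)$ factors resolves correctly, and that $\lambda=x_1+x_2-1$ is strictly positive on the fourth region so that dividing through by $\lambda$ is valid. Everything else is a mechanical substitution into Proposition~\ref{prop:originalPos}, so I anticipate no genuine difficulty.
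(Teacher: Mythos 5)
Your proposal is correct and matches the paper's treatment: the paper derives the corollary exactly by setting $d_1=d_2=1$ in Proposition~\ref{prop:originalPos}, and the degeneracy you flag in the fourth branch is precisely the cancellation the paper notes just before the corollary (when $d_1d_2=1$, the function $f(x,y,x_1+x_2-1;d)$ reduces to the original quadratic $(y_1+y_2)^2$), with $\lambda=x_1+x_2-1>0$ guaranteed on the ``otherwise'' region since the case $x_1+x_2\le 1$ is covered by the first branch. Nothing further is needed.
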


One way to employ $f_{1+}$ to define valid inequalities for $\set Z_+$ is to
consider the two decompositions of the bivariate quadratic function given by
\begin{align*}
	d_1y_1^2+2y_1y_2+d_2y_2^2=& \; d_1(y_1+\frac{y_2}{d_1})^2+(d_2-\frac{1}{d_1})y_2^2\\
	=& \; d_2(\frac{y_1}{d_2}+y_2)^2+(d_1-\frac{1}{d_2})y_2^2.
\end{align*}
Applying perspective reformulation and Corollary~\ref{prop:originalPosRank1} to the separable and pairwise quadratic terms, respectively, one can obtain two simple valid inequalities for $\set Z_+$:
\begin{subequations}\label{eq:decomp}
	\begin{align}
		t\ge \ & d_1f_{1+}(x_1,x_2,y_1,\frac{y_2}{d_1})+(d_2-\frac{1}{d_1})\frac{y_2^2}{x_2}\\
		t\ge \ & d_2f_{1+}(x_1,x_2,\frac{y_1}{d_2},y_2)+(d_1-\frac{1}{d_2})\frac{y_1^2}{x_1}.
	\end{align}
\end{subequations}
\citet{atamturk2018signal} show that, for the complementary set $\set Z_-$, counterparts of \eqref{eq:decomp}  along with the bound constraints are sufficient to describe $\cl\conv(\set Z_-)$. However, the following example shows that this is not true for $\cl\conv(\set Z_+)$, highlighting
the more complicated structure of $\cl\conv(\set Z_+)$ compared to its complementary set $\cl\conv(\set Z_-)$.
\begin{example}\label{ex:difference}
	Consider $\set Z_+$ with $d_1=d_2=d=2$, and let $x_1=x_2=x=2/3$, $y_1=y_2=y>0$ and $t=f^*_+(x,y)$. Then $(x,y,t)\in\cl\conv(\set Z_+)$.  On the one hand, $x_1+x_2>1$ implies \[t=f^*_+(x,y)=f(2/3,2/3,y,y,1/3)=\frac{133}{11}y^2.\]
	On the other hand, $f_{1+}(x,x,y,y/d)=(y+y/d)^2=9/2y^2$ indicates that \eqref{eq:decomp} reduces to
	\[ t\ge \frac{27}{4}y^2. \]
	Since $\frac{133}{11}y^2>\frac{27}{4}y^2$, \eqref{eq:decomp} holds strictly at this point.\qed
	
\end{example}

For completeness, we finish this section, recalling the convex hull of $\set Z_-$ in the original space of variables as given in \citep{atamturk2018signal}: 
 $$\cl\conv(\set Z_-)=\left\{(x,y,t)\in [0,1]^2\times \R_+^3: t \ge f^*_-(x_1,x_2,y_1,y_2;d_1,d_2) \right\} $$
where
$$f^*_-(x,y;d)=\begin{cases}\frac{d_1y_1^2-2y_1y_2+y_2^2/d_1}{x_1}+\frac{y_2^2}{x_2}\left(d_2-\frac{1}{d_1}\right)&\text{if }x_1\geq x_2\text{ and }d_1y_1\geq y_2\\
\frac{d_1y_1^2-2y_1y_2+d_2y_2^2}{x_2}&\text{if }x_1\geq x_2\text{ and }d_1y_1\leq y_2\\
\frac{d_1y_1^2-2y_1y_2+d_2y_2^2}{x_1}&\text{if }x_1\leq x_2\text{ and }y_1\geq d_2y_2\\
\frac{y_1^2/d_2-2y_1y_2+d_2y_2^2}{x_2}+\frac{y_1^2}{x_1}\left(d_1-\frac{1}{d_2}\right)&\text{if }x_1\le x_2\text{ and }y_1\leq d_2y_2.\end{cases}$$

%--------------------------

\section{An SDP relaxation for \CQI }\label{sec:formulation}
%\subsection{A general scheme to obtain strong relaxations}

\ignore{
\todo{I don't think we should start with reviewing this paper. Let's directly describe what is new.}
\citet{dong2013} study the valid inequalities of the form
\begin{equation}\label{eq:QPB}
\innerProd{P}{Y}\ge \alpha'y+\delta'x+\gamma
\end{equation} 
for the box-constrained QP set $\cl\conv(S)$, where $P\succeq 0$, $\alpha,\delta\in\R^n$, $\gamma\in\R$ and 
\[ S:=\left\{ (x,y,Y):x\in\{ 0,1 \}^n,y\in[0,1]^n,Y=yy',y_i\le x_i\;\forall i \right\}. \]
In particular, \citet{dong2013} point out when a point $(\bar x,\bar y,\bar Y)$ satisfies all valid inequalities of type \eqref{eq:QPB} subject to that $P$ is a diagonal and PSD matrix, then $(\bar x,\bar y,\bar Y)\in \left\{ (x,y,Y):0\le Y_{ii}\le y_i\le x_i\le 1,Y_{ii}x_i\ge y_i^2\;\forall i\right\}.$ In other words, when $P$ is diagonal, adding the valid inequality of form \eqref{eq:QPB} is essentially equivalent to adding the perspective cuts. 
}

In this section, we will give an extended SDP relaxation for \CQI utilizing the convex hull results obtained in the previous section. Introducing a symmetric matrix variable $Y$, let us write \CQI as
 \begin{equation}\label{eq:mixed_integer_problem_SPD}
 \min\left\{ a'x+b'y+\innerProd{Q}{Y}:Y\succeq yy', (x,y) \in \I{n} \right\}.
 \end{equation}
Suppose for a class of PSD matrices $\Pi\subseteq \symM^n_+$ we have an underestimator $f_P(x,y)$ for $y'Py$ for any $P\in\Pi$.
%  i.e. $x'Px\ge f_P(x,y)$ holds over the feasible region. 
Then, since $\innerProd{P}{Y}\ge y'Py$, we obtain a valid inequality 
\begin{equation}\label{eq:nonlinearValid}
f_P(x,y)-\innerProd{P}{Y}\le0, \;P\in\Pi
\end{equation} 
for \eqref{eq:mixed_integer_problem_SPD}.
For example, if $\Pi$ is the set of diagonal PSD matrices and $f_P(x,y)=\sum_i P_{ii}y_i^2/x_i$, for $P\in\Pi$, then inequality \eqref{eq:nonlinearValid} is
the perspective inequality. 

Furthermore, since \eqref{eq:nonlinearValid} holds for any $P\in\Pi$, one can take the supremum over all $P\in\Pi$ to get an optimal valid inequality of the type \eqref{eq:nonlinearValid}
\begin{equation}\label{eq:optimalNonlinearValid}
\sup_{P\in\Pi}f_P(x,y)-\innerProd{P}{Y}\le 0.
\end{equation}

Back to the example of perspective reformulation, inequality \eqref{eq:optimalNonlinearValid} becomes
\[ \sup_{P\succeq0\text{ diagonal}} \left \{ \sum_i P_{ii}
\bigg ( y_i^2/x_i-Y_{ii} \bigg ) \right \} \le 0, \]
which can be further reduced to  the closed form $y_i^2\le Y_{ii}x_i,\forall i\in [n]$. Thus, the optimal perspective formulation \optPersp can be regarded as a special case obtained via this scheme.  
%\todo{What is the difference for the perspective case? Diving by $P_{ii}$ we get the same inequality for all diagonal $P$ with or without taking supremum.\\ Shaoning: Notice we have a $\sum_i$ here. Since $P_{ii}$'s are not identical, we can not divide the inequality by $P_{ii}$. Since (20) holds for any $P$, we must have each term $y_i^2/x_i-Y_{ii}\le0$, that is optimal persp.}

\ignore{
%\subsection{Realization of the general scheme}
 We can directly follow the above paradigm to develop a novel relaxation for \CQI %\eqref{eq:mixed_integer_problem} 
 by incorporating the information gained in Proposition~\ref{prop:extendedPos} and Remark~\ref{remark:extendedNeg}. In our case,
}

 Letting $\Pi$ be the class of $2\times 2$ PSD matrices and $f_P(\cdot)$ as the function describing the convex hull of the mixed-integer epigraph of $y'Py$, one can derive new valid inequalities for \CQI.
  Specifically, using the extended formulations for $f_+^*(x,y;d)$ and
   $f_-^*(x,y;d)$ describing $\cl\conv(\set Z_+)$ and $\cl\conv(\set Z_-)$, we have
\begin{subequations}\label{eq:extendedPos}	
	\begin{align}
	f_+^*(x,y;d)=\min_{z,\lambda}& \frac{d_1(y_1-z_1)^2}{x_1-\lambda}+\frac{d_2(y_2-z_2)^2}{x_2-\lambda}+\frac{d_1z_1^2+2z_1z_2+d_2z_2^2}{\lambda}\\
	\text{s.t.} & \ z_1\ge0,z_2\ge0\\
	& \ \max\{ 0,x_1+x_2-1\} \le\lambda\le\min\{ x_1,x_2 \},
	\end{align}
\end{subequations}
and 
\begin{subequations}\label{eq:extendedNeg}	
	\begin{align}
	f_-^*(x,y;d)=\min_{z,\lambda}& \frac{d_1(y_1-z_1)^2}{x_1-\lambda}+\frac{d_2(y_2-z_2)^2}{x_2-\lambda}+\frac{d_1z_1^2-2z_1z_2+d_2z_2^2}{\lambda}\\
	\text{s.t.} & \ z_1\le y_1,z_2\le y_2\\
	& \ \max\{ 0,x_1+x_2-1\} \le\lambda\le\min\{ x_1,x_2 \}.
	\end{align}
\end{subequations}
Since any $2 \times 2$ symmetric PSD matrix $P$ can be rewritten in the form of $P=p\begin{psmallmatrix}
d_1&1\\1&d_2
\end{psmallmatrix}$ or $P=p\begin{psmallmatrix}
d_1&-1\\-1&d_2
\end{psmallmatrix},$ we can take $f_P(x,y)=pf_+^*(x,y;d)$ or $f_P(x,y)=pf_-^*(x,y;d)$, correspondingly. Since we have the explicit form of $f_+^*(\cdot)$ and $f_-^*(\cdot)$, for any fixed $d$, \eqref{eq:nonlinearValid} gives a nonlinear valid inequality which can be added to \eqref{eq:mixed_integer_problem_SPD}.  Alternatively, \eqref{eq:extendedPos} and \eqref{eq:extendedNeg} can be used to reformulate these inequalities as conic quadratic inequalities in an extended space. Moreover, maximizing the inequalities gives the optimal valid inequalities among the class of of $2\times 2$ PSD matrices stated below. Recall that $\D := \{d \in \R^2: d_1\ge0, d_2\ge0, d_1d_2\ge1\}$. 

\begin{proposition} For any pair of indices $i<j$,
	the following inequalities are valid for \CQI:%\eqref{eq:mixed_integer_problem}.
	\begin{subequations}\label{eq:relaxationFormF}
		\begin{align}
		&\max_{d \in \D}\left\{ f_+^*(x_i,x_j,y_i,y_j;d_1,d_2)\!-d_1Y_{ii}\!-d_2Y_{jj}\!-2Y_{ij} \right\}\le0, \ \label{eq:relaxationPos}\\
		&\max_{d \in \D}\left\{ f_-^*(x_i,x_j,y_i,y_j;d_1,d_2)\!-d_1Y_{ii}\!-d_2Y_{jj}\!+2Y_{ij} \right\}\le0. \  \label{eq:relaxationNeg}
		\end{align}
	\end{subequations}	
\end{proposition}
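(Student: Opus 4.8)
The plan is to recognize each of \eqref{eq:relaxationPos} and \eqref{eq:relaxationNeg} as the optimal member, over $d\in\D$, of the family of valid inequalities \eqref{eq:nonlinearValid} generated by $2\times2$ PSD matrices supported on the coordinate pair $\{i,j\}$. Accordingly, I would fix the pair $i<j$ together with a parameter $d\in\D$ and work with the symmetric matrices $P_+=\begin{psmallmatrix} d_1 & 1 \\ 1 & d_2\end{psmallmatrix}$ and $P_-=\begin{psmallmatrix} d_1 & -1 \\ -1 & d_2\end{psmallmatrix}$; since $d\in\D$ guarantees $d_1,d_2\ge0$ and $d_1d_2\ge1$, both $P_+$ and $P_-$ are PSD. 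It then suffices to prove, for each fixed $d$, the single bivariate inequality $f_+^*(x_i,x_j,y_i,y_j;d)\le d_1Y_{ii}+d_2Y_{jj}+2Y_{ij}$ (and its analogue with $P_-$ and $f_-^*$) at every feasible point $(x,y,Y)$ of \eqref{eq:mixed_integer_problem_SPD}. Taking the maximum over $d\in\D$ of the resulting nonpositive quantity then yields the stated inequalities, since a maximum of nonpositive numbers is nonpositive.

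For the core bivariate inequality I would chain two bounds. First, letting $\bar y=(y_i,y_j)$ and $\bar Y=\begin{psmallmatrix} Y_{ii} & Y_{ij} \\ Y_{ij} & Y_{jj}\end{psmallmatrix}$ denote the principal submatrix of $Y$ indexed by $\{i,j\}$, the constraint $Y\succeq yy'$ passes to principal submatrices, so $\bar Y\succeq\bar y\bar y'$. Since $P_+\succeq0$, the inner product of the PSD matrices $P_+$ and $\bar Y-\bar y\bar y'$ is nonnegative, which gives $d_1Y_{ii}+d_2Y_{jj}+2Y_{ij}=\innerProd{P_+}{\bar Y}\ge\bar y'P_+\bar y=d_1y_i^2+2y_iy_j+d_2y_j^2$. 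This is precisely the step realizing $\innerProd{P}{Y}\ge y'Py$ at the level of the $2\times2$ block.

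Second, I would invoke the convex hull description to bound the quadratic from below by its envelope. Because $(x_i,x_j,y_i,y_j)\in\I{2}$, the point $(x_i,x_j,y_i,y_j,d_1y_i^2+2y_iy_j+d_2y_j^2)$ lies in $\set Z_+\subseteq\cl\conv(\set Z_+)$, and Proposition~\ref{prop:originalPos} identifies $\cl\conv(\set Z_+)$ with $\{t\ge f_+^*\}$; hence $d_1y_i^2+2y_iy_j+d_2y_j^2\ge f_+^*(x_i,x_j,y_i,y_j;d)$. Combining the two bounds yields $f_+^*\le d_1Y_{ii}+d_2Y_{jj}+2Y_{ij}$ for every fixed $d\in\D$, and maximizing establishes \eqref{eq:relaxationPos}. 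The inequality \eqref{eq:relaxationNeg} follows verbatim with $P_-$ in place of $P_+$, using that $\cl\conv(\set Z_-)=\{t\ge f_-^*\}$ and that the cross term now enters with a minus sign. No genuinely hard step arises; the only points needing care are that $P_\pm\succeq0$ follows exactly from the definition of $\D$, that principal submatrices inherit the semidefinite domination $\bar Y\succeq\bar y\bar y'$, and that $f_\pm^*$ underestimates the quadratic only because the evaluation point is integer-feasible in $\I{2}$ — all three being precisely the hypotheses built into the construction.
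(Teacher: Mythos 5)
Your proposal is correct and follows essentially the same route as the paper: the paper states this proposition as an immediate consequence of its preceding discussion, namely that $\innerProd{P}{Y}\ge y'Py$ whenever $Y\succeq yy'$ and $P\succeq 0$, that $f_\pm^*$ underestimates the corresponding bivariate quadratic on $\I{2}$ by the convex hull descriptions, and that taking the supremum over $d\in\D$ of inequalities of type \eqref{eq:nonlinearValid} preserves validity as in \eqref{eq:optimalNonlinearValid}. Your write-up simply makes these three steps explicit (including the restriction to the $2\times 2$ principal submatrix), so no gap remains.
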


\ignore{ From the above discussion, validity is clear. No need for a proof.
\begin{proposition}
	The following formulation is a valid convex relaxation of \CQI. %\eqref{eq:mixed_integer_problem}.
	\begin{subequations}\label{eq:relaxationFormF}
		\begin{align}
		\min \;& a'x+b'y+\innerProd{Q}{Y}\\
		\text{s.t.} \;&Y-yy'\succeq 0\\
		&\max_{d \in D}\left\{ f_+^*(x_i,x_j,y_i,y_j;d_1,d_2)\!-d_1Y_{ii}\!-d_2Y_{jj}\!-2Y_{ij} \right\}\le0, \ \forall i<j\label{eq:relaxationPos}\\
		&\max_{d \in D}\left\{ f_-^*(x_i,x_j,y_i,y_j;d_1,d_2)\!-d_1Y_{ii}\!-d_2Y_{jj}\!+2Y_{ij} \right\}\le0, \ \forall i<j\label{eq:relaxationNeg}\\
		& 0\le x_i\le 1, \ \forall i 
		\end{align}
	\end{subequations}	
\end{proposition}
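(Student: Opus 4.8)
The plan is to read these two inequalities as the concrete instance of the general scheme \eqref{eq:nonlinearValid}--\eqref{eq:optimalNonlinearValid} in which $\Pi$ is the family of $2\times 2$ PSD matrices supported on the coordinate pair $i,j$, and the underestimators $f_P$ are the convex-hull functions $f_+^*$ and $f_-^*$ already computed. Fix a pair $i<j$ and a vector $d\in\D$. For the positive case I would introduce the $n\times n$ symmetric matrix $P=P(d)$ whose only nonzero entries are $P_{ii}=d_1$, $P_{jj}=d_2$, $P_{ij}=P_{ji}=1$. The first thing to record is that $P\succeq 0$: since $d\in\D$ forces $d_1,d_2\ge 0$ and $d_1d_2\ge 1$, the underlying block $\begin{psmallmatrix}d_1&1\\1&d_2\end{psmallmatrix}$ has nonnegative diagonal and nonnegative determinant, and all other rows/columns vanish.

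First I would establish the underestimation step. For any $(x,y)\in\I{n}$, restricting to coordinates $i,j$ the integrality and complementarity constraints give $(x_i,x_j,y_i,y_j)\in\I{2}$, so with $t:=y'P(d)y=d_1y_i^2+2y_iy_j+d_2y_j^2$ the point $(x_i,x_j,y_i,y_j,t)$ lies in $\set Z_+$. By Proposition~\ref{prop:originalPos}, $\set Z_+\subseteq\cl\conv(\set Z_+)=\{t\ge f_+^*\}$, whence $f_+^*(x_i,x_j,y_i,y_j;d)\le y'P(d)y$. Then I would couple this with the semidefinite lifting: because $P(d)\succeq 0$, any $Y\succeq yy'$ satisfies $\innerProd{P(d)}{Y}-y'P(d)y=\innerProd{P(d)}{Y-yy'}\ge 0$, and since only the $i,j$ block of $P(d)$ is nonzero, $\innerProd{P(d)}{Y}=d_1Y_{ii}+d_2Y_{jj}+2Y_{ij}$. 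Chaining the two bounds yields, for every fixed $d\in\D$,
\[
f_+^*(x_i,x_j,y_i,y_j;d)\le d_1Y_{ii}+d_2Y_{jj}+2Y_{ij},
\]
i.e.\ the bracketed quantity in \eqref{eq:relaxationPos} is nonpositive. Since a pointwise supremum of nonpositive quantities is nonpositive, passing to $\max_{d\in\D}$ preserves the bound and gives \eqref{eq:relaxationPos}.

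Inequality \eqref{eq:relaxationNeg} follows by the identical argument applied to the sign-flipped matrix $\begin{psmallmatrix}d_1&-1\\-1&d_2\end{psmallmatrix}$, which is still PSD for $d\in\D$, using the set $\set Z_-$ and its convex-hull underestimator $f_-^*$ in place of $\set Z_+$ and $f_+^*$; the off-diagonal sign change turns $\innerProd{P}{Y}$ into $d_1Y_{ii}+d_2Y_{jj}-2Y_{ij}$, matching the $+2Y_{ij}$ in \eqref{eq:relaxationNeg}. I do not expect a substantial obstacle here: the statement is a direct corollary of the convex-hull descriptions already proved together with the elementary fact $\innerProd{P}{Y-yy'}\ge 0$ for $P\succeq 0$. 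The only points needing care are getting the direction of the underestimation right (that $f_+^*$, as the convex envelope, lies \emph{below} the quadratic on the discrete feasible set) and confirming PSD-ness of the embedded matrices from the definition of $\D$; the passage to the maximum over $d$ is then immediate.
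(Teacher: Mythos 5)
Your proof is correct and takes essentially the same route as the paper: the paper's argument likewise instantiates the scheme \eqref{eq:nonlinearValid}--\eqref{eq:optimalNonlinearValid} with embedded $2\times 2$ PSD matrices, using the convex-hull descriptions to get $f_\pm^*\le y'Py$ on feasible points of \CQI and the inequality $\innerProd{P}{Y}\ge y'Py$ (your $\innerProd{P}{Y-yy'}\ge 0$ step) to pass to the lifted variable $Y$, after which the maximum over $d\in\D$ is immediate. The only item the paper records that you leave implicit is the one-line observation that each constraint is a pointwise supremum of convex functions of $(x,y,Y)$ and is therefore convex, which is what justifies the word \emph{convex} in ``valid convex relaxation.''
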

\begin{proof}
	Note that \eqref{eq:relaxationPos} is the point supremum of convex functions and there convex.
	For any feasible solution to \CQI. %eqref{eq:mixed_integer_problem}. 
	Let $Y=yy'$. For any $i,j$ such that $i>j$ and any $d_1,d_2\ge0$ such that $d_1d_2\ge1$,
	\begin{align*}
	&f_+^*(x_i,x_j,y_i,y_j,d)-d_1Y_{ii}-d_2Y_{jj}-2Y_{ij}\\
	=&f_+^*(x_i,x_j,y_i,y_j,d)-d_1y_i^2-d_2y_j^2-2y_iy_j
	\le0,
	\end{align*}
	where the last inequality follows from Proposition~\ref{prop:extendedPos}. It implies that \eqref{eq:relaxationPos} is valid for problem \CQI. 
	Using the same argument, we can show \eqref{eq:relaxationNeg} is also a convex constraint and valid for \CQI.
\end{proof}
}

Optimal inequalities \eqref{eq:relaxationFormF} may be employed effectively if they can be expressed explicitly. 
We will now show how to write inequalities \eqref{eq:relaxationFormF} explicitly using an auxiliary $3 \times 3$ matrix variable $W$.

%Before proving  Proposition~\ref{prop:sdppos}, we first  show how to reformulate \eqref{eq:relaxationPos} and \eqref{eq:relaxationNeg} as SDP constraints via conic duality. 
\begin{lemma} \label{lemma:Wplus}
	%Point $(x_1,x_2,y_1,y_2,Y_{11},Y_{12},Y_{22})$ satisfying \eqref{eq:relaxationPos} is equivalent to the point is the feasible solution of the following inequality system	
	A point $(x_1,x_2,y_1,y_2,Y_{11},Y_{12},Y_{22})$ %$(x,y,Y)$ 
	satisfies inequality \eqref{eq:relaxationPos} if and only if there exists $W^+\in \symM^3_+$ such that the inequality system
	\begin{subequations}\label{eq:positiveRelaxation}
		\begin{align}
		&W^+_{12}\le Y_{12}\\
		&(Y_{11}-W^+_{11})(x_1-W^+_{33})\ge(y_1-W^+_{31})^2,W^+_{11}\le Y_{11},W^+_{33}\le x_1\\
		&(Y_{22}-W^+_{22})(x_2-W^+_{33})\ge(y_2-W^+_{32})^2,W^+_{22}\le Y_{22},W^+_{33}\le x_2\\
		&W^+_{31}\ge0,W^+_{32}\ge0\\
		&W^+_{33}\ge x_1+x_2-1
		\end{align}
	\end{subequations}
is feasible.
\end{lemma}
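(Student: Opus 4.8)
The plan is to reason from the lifted representation \eqref{eq:extendedPos} of $f_+^*$ (Proposition~\ref{prop:extendedPos}), not from its four-piece form. For $z\in\R_+^2$ and $\lambda$ in $\Lambda=[\max\{0,x_1+x_2-1\},\min\{x_1,x_2\}]$ write
\[
A_i=\frac{(y_i-z_i)^2}{x_i-\lambda}+\frac{z_i^2}{\lambda}\quad(i=1,2),\qquad C=\frac{z_1z_2}{\lambda},
\]
so that the objective of \eqref{eq:extendedPos} is $d_1A_1+d_2A_2+2C=\langle P_d,\Sigma\rangle$ with $P_d=\left(\begin{smallmatrix}d_1&1\\1&d_2\end{smallmatrix}\right)$ and $\Sigma=\left(\begin{smallmatrix}A_1&C\\C&A_2\end{smallmatrix}\right)$. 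Setting $\tilde Y=\left(\begin{smallmatrix}Y_{11}&Y_{12}\\Y_{12}&Y_{22}\end{smallmatrix}\right)$, inequality \eqref{eq:relaxationPos} reads $\max_{d\in\D}\big(\min_{z,\lambda}\langle P_d,\Sigma\rangle-\langle P_d,\tilde Y\rangle\big)\le0$. The structural fact I would record first is that $\{P_d:d\in\D\}$ generates the closed cone $K=\{P\in\symM_+^2:P_{12}\ge0\}$, whose dual is $K^*=\symM_+^2+\{tE:t\ge0\}$ with $E=\left(\begin{smallmatrix}0&1\\1&0\end{smallmatrix}\right)$; equivalently, $\langle P_d,N\rangle\ge0$ for every $d\in\D$ if and only if $N-tE\succeq0$ for some $t\ge0$.

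For the \emph{if} direction I would take a feasible $W^+$ and set $\lambda=W^+_{33}$, $z_i=W^+_{3i}$; the constraints of \eqref{eq:positiveRelaxation} force $z\ge0$ and $\lambda\in\Lambda$. The $2\times2$ minor of $W^+$ on indices $\{i,3\}$ gives $W^+_{ii}\ge z_i^2/\lambda$, and the stated inequality gives $Y_{ii}-W^+_{ii}\ge(y_i-z_i)^2/(x_i-\lambda)$; summing yields $A_i\le Y_{ii}$. Taking the Schur complement of $W^+$ about its $(3,3)$ entry, increasing its diagonal entries to $Y_{ii}-A_i$ (which preserves positive semidefiniteness) and noting that its off-diagonal is $W^+_{12}-C$, shows that $N:=\tilde Y-\Sigma$ obeys $N-tE\succeq0$ with $t=Y_{12}-W^+_{12}\ge0$. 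The cone fact then gives $\langle P_d,\tilde Y-\Sigma\rangle\ge0$ for all $d\in\D$, and since $f_+^*(x,y;d)\le\langle P_d,\Sigma\rangle$ we recover \eqref{eq:relaxationPos}. No interchange of $\min$ and $\max$ is needed here.

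For the \emph{only if} direction I would interchange the outer $\max_{d}$ and inner $\min_{z,\lambda}$: the integrand is affine in $d$ and, for each $d\in\D$, convex in $(z,\lambda)$ (it is the perspective of the positive semidefinite quadratic $d_1z_1^2+2z_1z_2+d_2z_2^2$ plus two perspective terms), so a minimax theorem applies after compactification. After the swap the inner maximization over $\D$ is explicit: it equals $+\infty$ unless $A_1\le Y_{11}$ and $A_2\le Y_{22}$, in which case the maximizer lies on $d_1d_2=1$ and the value is $2(C-Y_{12})-2\sqrt{(Y_{11}-A_1)(Y_{22}-A_2)}$. Hence \eqref{eq:relaxationPos} is equivalent to the existence of $(z,\lambda)$ with $A_i\le Y_{ii}$ and $C-Y_{12}\le\sqrt{(Y_{11}-A_1)(Y_{22}-A_2)}$. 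From such a point I would build $W^+$ by $W^+_{33}=\lambda$, $W^+_{3i}=z_i$, $W^+_{ii}=Y_{ii}-(y_i-z_i)^2/(x_i-\lambda)$ and $W^+_{12}=C-\sqrt{(Y_{11}-A_1)(Y_{22}-A_2)}$; its Schur complement about $(3,3)$ then has zero determinant and nonnegative diagonal, so $W^+\succeq0$, and each inequality in \eqref{eq:positiveRelaxation} holds, with $W^+_{12}\le Y_{12}$ being exactly the displayed bound.

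The main obstacle is the minimax interchange: both $\D$ and the $(z,\lambda)$-domain are a priori unbounded, so I would justify the swap by restricting to the effective domain where $A_i\le Y_{ii}$ — there $z_i^2/\lambda\le Y_{ii}$ bounds $z$ while $\lambda$ lies in the compact interval $\Lambda$ — and by arguing that the minimum is attained (equivalently, phrasing the step as SDP strong duality) so that the certificate $(z,\lambda)$ genuinely exists. A secondary point is the bookkeeping for the degenerate cases $\lambda=0$ and $\lambda=x_i$, handled through the division-by-zero convention; there $G$ collapses to a perspective/univariate expression and the construction goes through with the corresponding entries of $W^+$ set to $0$.
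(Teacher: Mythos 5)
Your route is genuinely different from the paper's: the paper proves Lemma~\ref{lemma:Wplus} by two explicit rounds of conic duality (first the SOCP dual of the inner minimization defining $f_+^*$, which turns \eqref{eq:relaxationPos} into a single maximization over $d$ and dual variables, then the SDP dual of that maximization, whose multiplier matrix is exactly $W^+$), whereas you perform a minimax interchange, solve the inner maximization over $d$ in closed form, and build $W^+$ by hand. Your ``if'' direction is correct and complete as written, and is in fact more elementary than the paper's argument: it needs no duality at all, only Schur complements and the easy half of your cone identity for $\{P_d : d\in\D\}$ (which identity is itself correct).

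The gap is in the ``only if'' direction, at exactly the step you flag. You justify the swap by restricting the inner minimization to the effective domain $E=\{(z,\lambda): A_1\le Y_{11},\ A_2\le Y_{22}\}$. That restriction is harmless on the $\inf\sup$ side (outside $E$ the supremum over $d$ is $+\infty$), but it moves the $\sup\inf$ side in the wrong direction: for a fixed $d$ the unrestricted minimizer of $\langle P_d,\Sigma\rangle$ need not lie in $E$ (nothing in the hypothesis prevents $Y_{11}$ from being small while $Y_{12}$ is large), so $\inf_E\ge\inf$ and hence $\sup_d\inf_E\ \ge\ \sup_d\inf$. The hypothesis \eqref{eq:relaxationPos} only gives $\sup_d\inf\le 0$; Sion's theorem on $E$ then yields $\inf_E\sup_d=\sup_d\inf_E$, a quantity you cannot bound by $0$, so the chain does not close. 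The fix is to compactify differently: impose $z_i\le y_i$, $i=1,2$, instead. For every fixed $d\in\D$, replacing $z_i$ by $\min\{z_i,y_i\}$ does not increase any of the terms $(y_i-z_i)^2/(x_i-\lambda)$, $z_i^2/\lambda$, $z_1z_2/\lambda$ --- this is precisely the redundancy observation used to derive Proposition~\ref{prop:extendedPos} --- so this restriction leaves every inner minimum unchanged and $\sup_d\inf\le 0$ is preserved. The $(z,\lambda)$-domain $[0,y_1]\times[0,y_2]\times\Lambda$ is now compact and convex, the integrand is affine in $d$ and convex, lower semicontinuous in $(z,\lambda)$, so Sion's theorem gives $\inf\sup\le 0$, and attainment follows from compactness and lower semicontinuity. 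With that replacement, your construction of $W^+$ from an optimal $(z,\lambda)$ (zero-determinant Schur complement, $W^+_{12}=C-\sqrt{(Y_{11}-A_1)(Y_{22}-A_2)}$) is correct, including the degenerate cases $\lambda=0$ and $\lambda=x_i$ handled by the division convention.
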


%We can also consider the SOCP optimization problem associated with $\cl\conv(Z_-)$ and use the trick to make \eqref{eq:relaxationNeg} a SDP constraint.
\begin{lemma} \label{lemma:Wneg}
		A point $(x_1,x_2,y_1,y_2,Y_{11},Y_{12},Y_{22})$ %$(x,y,Y)$ 
		satisfies inequality \eqref{eq:relaxationNeg} if and only if there exists $W^-\in \symM^3_{+}$ such that the inequality system
	\begin{subequations}\label{eq:negativeRelaxation}
		\begin{align}
		&Y_{12}\le W^-_{12}\\
		&(Y_{11}-W^-_{11})(x_1-W^-_{33})\ge(y_1-W^-_{31})^2,W^-_{11}\le Y_{11},W^-_{33}\le x_1\\
		&(Y_{22}-W^-_{22})(x_2-W^-_{33})\ge(y_2-W^-_{32})^2,W^-_{22}\le Y_{22},W^-_{33}\le x_2\\
		&W^-_{31}\le y_1,W^-_{32}\le y_2\\
		&W^-_{33}\ge x_1+x_2-1
		\end{align}
	\end{subequations}
is feasible.
\end{lemma}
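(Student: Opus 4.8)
The plan is to read the matrix $W^-$ as a lifted encoding of the minimizer $(z,\lambda)$ of the extended formulation \eqref{eq:extendedNeg} together with its second-order moments, and to read positive semidefiniteness through a Schur complement. Concretely, I identify $W^-_{33}=\lambda$, $W^-_{31}=z_1$, $W^-_{32}=z_2$, so that for $\lambda>0$ the Schur complement with respect to the $(3,3)$ entry makes $W^-\succeq 0$ equivalent to $W^-_{11}\ge z_1^2/\lambda$, $W^-_{22}\ge z_2^2/\lambda$, and $(W^-_{11}-z_1^2/\lambda)(W^-_{22}-z_2^2/\lambda)\ge(W^-_{12}-z_1z_2/\lambda)^2$. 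Writing the objective of \eqref{eq:extendedNeg} as $h(z,\lambda;d)=d_1A+d_2B-2z_1z_2/\lambda$ with $A=\frac{(y_1-z_1)^2}{x_1-\lambda}+\frac{z_1^2}{\lambda}$ and $B=\frac{(y_2-z_2)^2}{x_2-\lambda}+\frac{z_2^2}{\lambda}$, and noting that $h$ is affine in $d$, inequality \eqref{eq:relaxationNeg} reads $\max_{d\in\D}\min_{(z,\lambda)\in S}\Psi(d;z,\lambda)\le 0$, where $\Psi=h(z,\lambda;d)-d_1Y_{11}-d_2Y_{22}+2Y_{12}$ and $S$ is the (fixed, $d$-independent) feasible set of $(z,\lambda)$ in \eqref{eq:extendedNeg}.

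For the ``if'' direction I would argue directly. Given a feasible $W^-$, set $(z,\lambda)$ as above; the constraints of \eqref{eq:negativeRelaxation} give $Y_{11}-A\ge W^-_{11}-z_1^2/\lambda=:p_1\ge 0$ and $Y_{22}-B\ge p_2\ge 0$, while the off-diagonal Schur inequality together with $Y_{12}\le W^-_{12}$ yields $\sqrt{(Y_{11}-A)(Y_{22}-B)}\ge\sqrt{p_1p_2}\ge|W^-_{12}-z_1z_2/\lambda|\ge Y_{12}-z_1z_2/\lambda$. Combining this with the mean-value inequality $d_1(Y_{11}-A)+d_2(Y_{22}-B)\ge 2\sqrt{d_1d_2(Y_{11}-A)(Y_{22}-B)}\ge 2\sqrt{(Y_{11}-A)(Y_{22}-B)}$, valid because $d_1d_2\ge 1$, gives $\Psi(d;z,\lambda)\le 0$ for every $d\in\D$. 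Hence $f_-^*(x,y;d)\le h(z,\lambda;d)\le d_1Y_{11}+d_2Y_{22}-2Y_{12}$ for all $d\in\D$, which is \eqref{eq:relaxationNeg}.

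For the converse I would first compute the maximization over $d$ explicitly: for fixed $(z,\lambda)$, since $\Psi$ is affine in $d$ with $\D=\{d\ge 0:d_1d_2\ge 1\}$, the quantity $\max_{d\in\D}\Psi$ equals $+\infty$ unless $A\le Y_{11}$ and $B\le Y_{22}$, in which case minimizing $d_1(Y_{11}-A)+d_2(Y_{22}-B)$ over $d_1d_2\ge1$ by the arithmetic--geometric mean inequality yields $\max_{d\in\D}\Psi=2Y_{12}-2z_1z_2/\lambda-2\sqrt{(Y_{11}-A)(Y_{22}-B)}$. If the order of $\max_d$ and $\min_{z,\lambda}$ can be exchanged, then \eqref{eq:relaxationNeg} is equivalent to the existence of $(z,\lambda)\in S$ with $A\le Y_{11}$, $B\le Y_{22}$ and $Y_{12}-z_1z_2/\lambda\le\sqrt{(Y_{11}-A)(Y_{22}-B)}$; from such a point one recovers a feasible $W^-$ by taking $W^-_{11}=Y_{11}-\frac{(y_1-z_1)^2}{x_1-\lambda}$, $W^-_{22}=Y_{22}-\frac{(y_2-z_2)^2}{x_2-\lambda}$ (so that $W^-_{11}-z_1^2/\lambda=Y_{11}-A$, and likewise for the second entry) and $W^-_{12}=z_1z_2/\lambda+\sqrt{(Y_{11}-A)(Y_{22}-B)}$, and then checking that all inequalities of \eqref{eq:negativeRelaxation} and $W^-\succeq 0$ hold.

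The main obstacle is precisely this interchange of $\max_{d\in\D}$ and $\min_{(z,\lambda)\in S}$, since neither $\D$ nor $S$ is compact. I expect to resolve it by a minimax theorem after a reduction to compact domains: $\Psi$ is convex in $(z,\lambda)$ and concave (affine) in $d$, the perspective terms in $h$ are coercive in $z$ so the inner minimum is attained on a bounded set, and directions in $\D$ that drive $\Psi\to-\infty$ are irrelevant to the outer maximum, so one may restrict $d$ to a compact arc of the boundary $d_1d_2=1$ and invoke Sion's theorem; alternatively one can recognize the whole expression as a conic program and obtain $W^-$ as the matrix arising in its strong dual, which also explains the $3\times 3$ PSD structure. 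Finally, the degenerate cases $\lambda=0$ (forcing $z=0$ by the division-by-zero convention and reducing to the separable perspective inequality) and $\lambda\in\{x_1,x_2\}$ would be handled separately by the same convention, and the argument for Lemma~\ref{lemma:Wplus} is entirely analogous, with the substitutions $z\ge 0\mapsto W^+_{31},W^+_{32}\ge 0$ and $Y_{12}\le W^-_{12}\mapsto W^+_{12}\le Y_{12}$ and the opposite sign on the cross term.
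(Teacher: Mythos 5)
Your overall plan is genuinely different from the paper's, and your ``if'' direction is complete and correct: identifying $(z,\lambda)=(W^-_{31},W^-_{32},W^-_{33})$, taking the Schur complement of $W^-\succeq0$ at the $(3,3)$ entry, and chaining $d_1(Y_{11}-A)+d_2(Y_{22}-B)\ge2\sqrt{d_1d_2(Y_{11}-A)(Y_{22}-B)}\ge2\sqrt{(Y_{11}-A)(Y_{22}-B)}\ge 2(Y_{12}-z_1z_2/\lambda)$ handles every $d\in\D$ at once. Your explicit construction of $W^-$ from an optimal $(z,\lambda)$ in the converse also verifies correctly (the Schur complement of the constructed matrix is rank one with zero determinant). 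The paper itself never evaluates $\max_{d\in\D}$ in closed form: it dualizes the inner conic-quadratic minimization \eqref{eq:extendedNeg}, observes that the dual constraint involving the inverse of the matrix built from $d$ can be folded, via a Schur complement, into a single $3\times3$ PSD constraint that simultaneously absorbs the maximization over $d$ (and enforces $d\in\D$ automatically), and then takes the SDP dual of that combined maximization; $W^-$ is exactly the dual matrix variable. So your fallback option (``take the strong dual of the whole conic program'') \emph{is} the paper's proof, while your primary route (AM--GM evaluation of the max over $d$, then a minimax interchange) is a more primal, explicit alternative.

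The genuine gap is the interchange step, and the specific mechanism you propose fails. Sion's theorem needs both domains convex and $\Psi$ quasi-concave in the maximizing variable. The arc $\{d\in\R_+^2:d_1d_2=1\}$ is not convex, and even after parametrizing it as $d=(\me^s,\me^{-s})$, $s$ in a compact interval, the map $s\mapsto\me^s(A-Y_{11})+\me^{-s}(B-Y_{22})$ is strictly convex and non-monotone whenever $A>Y_{11}$ and $B>Y_{22}$ --- such $(z,\lambda)$ exist in $S$ (send $z_1,z_2\to-\infty$) --- hence it is not quasi-concave, and Sion does not apply. Restricting the inner minimization to $\{A\le Y_{11},\,B\le Y_{22}\}$ to restore concavity in $s$ is also illegitimate: that raises the inner minimum, so the hypothesis $\max_{d\in\D}\min_{S}\Psi\le0$ no longer controls the restricted max-min. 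The repair that saves your route is to exhaust $\D$ itself (not its boundary) by the compact \emph{convex} sets $K_n=\{d\in\D:d_1\le n,\ d_2\le n\}$: on $S\times K_n$ the function $\Psi$ is affine, hence quasi-concave, in $d$, so Sion yields $\min_{S}\max_{K_n}\Psi\le0$ for every $n$. The sublevel sets $C_n=\{(z,\lambda)\in S:\max_{K_n}\Psi\le0\}$ are nested and nonempty, and compact: boundedness holds because every denominator $x_i-\lambda$, $\lambda$ lies in $[0,1]$, so a bounded value forces $z$ bounded, and closedness follows from lower semicontinuity of a supremum of lsc functions. Any point of $\bigcap_n C_n$ satisfies $\sup_{d\in\D}\Psi\le0$, which simultaneously resolves the attainment issue you raise, and then your construction of $W^-$ completes the proof.
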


\begin{proof}[Proof of Lemma~\ref{lemma:Wplus}]
	\ignore{
	Notice that the left hand of \eqref{eq:relaxationPos} is in fact a max-min-type optimization problem. If we can convert it to a min-min-type optimization problem, we can directly drop min-min operator in the constraint. In order to achieve this goal, we first take the SOCP duality with respect to the inner optimization problem to make \eqref{eq:relaxationPos} a max-max-type optimization problem, then we take the SDP duality with respect to the whole optimization problem and get the desired form.
}
    Writing $f^*_+$ as a conic quadratic minimization problem as in \eqref{eq:extendedPos},
	we first express inequality \eqref{eq:relaxationPos} as 
	%\textbf{max-min$\Rightarrow$max-max}\\
	%	Consider the optimization problem \eqref{eq:extendedPos}, which can be reformulated as a SOCP program and it implies 
		\begin{align*}
	%	0\ge\max_{d_1d_2\ge1,d_1\ge0,d_2\ge0}&\left\{ f_+^*(x_1,x_2,y_1,y_2,d_1,d_2)-d_1Y_{11}-d_2Y_{22}-2Y_{12} \right\}\\
		0 \ge \max_{d \in \D}\min_{t,\lambda,z}\quad & d_1t_1+d_2t_2+t_3-d_1Y_{11}-d_2Y_{22}-2Y_{12}\\
		\text{s.t.}\quad&t_1(x_1-\lambda)\ge(y_1-z_1)^2,t_1\ge0,x_1-\lambda\ge0\\
		&t_2(x_2-\lambda)\ge (y_2-z_2)^2,t_2\ge0,x_2-\lambda\ge0\\
		&\lambda t_3\ge \norm{2}{B_+z}^2,\lambda\ge0,t_3\ge0\\
		& \lambda \ge x_1 +x_2-1\\
		&z_1,z_2\ge0,
%		&z_2\ge0,
		\end{align*}
		where $B_+^2=\begin{psmallmatrix}
		d_1&1\\1&d_2
		\end{psmallmatrix}.$ 
		Taking the dual of the inner minimization, the inequality can be written as		
			\begin{align*}
	0\ge\max_{d \in \D} \max_{\alpha,\tau,\eta,\gamma,s,r} &\;-	\sum_{i=1,2} (x_i s_i + 2 y_i \eta_i)
	%-x_1s_1-x_2s_2-2y_1\eta_1-2y_2\eta_2
	+(x_1+x_2-1)\alpha-d_1Y_{11}-d_2Y_{22}-2Y_{12}\\
	\text{s.t. }&d_is_i\ge\eta_i^2, \;i=1,2\\
%	&d_2s_2\ge \eta_2^2\\
	&s_3\ge\norm{2}{\gamma}^2 \\
	&r_1, r_2, \alpha \ge0\\
	&\alpha+s_3=s_1+s_2\\
	&\begin{pmatrix}
	r_1-2\eta_1\\r_2-2\eta_2
	\end{pmatrix}=2B_+\begin{pmatrix}
	\gamma_1\\\gamma_2,
	\end{pmatrix},
	\end{align*}
	\ignore{
		\begin{align*}
		0\ge\max_{d \in D} \max_{\tau,\eta,\gamma,s,r} &\; 
		\sum_{i=1,2} -x_i s_i - 2 y_i \eta_i
%		-x_1s_1-x_2s_2-2y_1\eta_1-2y_2\eta_2
		+(x_1+x_2-1)\alpha-d_1Y_{11}-d_2Y_{22}-2Y_{12}\\
		\text{s.t. }&\tau_1s_1\ge \eta_1^2,\tau_1\ge0,s_1\ge0\\
		&\tau_2s_2\ge \eta_2^2, \\ %\tau_2\ge0,s_2\ge0\\
		&\tau_3s_3\ge\norm{2}{\gamma}^2, \\ %\tau_3\ge0,s_3\ge0\\
		&r_1\ge0\\
		&r_2\ge0\\
		&\alpha\ge0\\
		&\tau_1=d_1\\
		&\tau_2=d_2\\
		&\tau_3=1\\
		&\alpha+s_3=s_1+s_2\\
		&\begin{pmatrix}
		r_1-2\eta_1\\r_2-2\eta_2
		\end{pmatrix}=2B_+\begin{pmatrix}
		\gamma_1\\\gamma_2,
		\end{pmatrix},
		\end{align*}
	}
	where the last equation implies $\gamma = B_+^{-1}(r/2-\eta)$. Substituting out $\gamma$ and $s_3$, 
	and letting $u_i=\eta_i-r_i/2,i=1,2$,
	the maximization problem is further reduced to
		\begin{subequations}\label{eq:positiveConic}
			\begin{align*}
			0\ge\max_{d \in \D}&\max_{\alpha,\eta,s,r, u}
				-	\sum_{i=1,2} (x_i s_i + 2 y_i \eta_i)
			%-x_1s_1-x_2s_2-2y_1\eta_1-2y_2\eta_2
			+(x_1+x_2-1)\alpha-d_1Y_{11}-d_2Y_{22}-2Y_{12}\\
			\text{s.t. }&d_is_i\ge\eta_i^2, \;i=1,2\\
%			&d_2s_2\ge\eta_2^2\\
			&\eta_i\ge u_i,\;i=1,2\\
            &\alpha\ge0\\
%			&r_1, r_2, \alpha \ge0\\
%			&r_2\ge0\\
%			&\alpha\ge0\\
			&s_1+s_2-\alpha\ge
%			\begin{pmatrix}
%			r_1/2-\eta_1\\r_2/2-\eta_2
%			\end{pmatrix}'
u'
			\begin{bsmallmatrix}
			d_1&1\\1&d_2
			\end{bsmallmatrix}^{-1}
			u.
%			\begin{pmatrix}
%			r_1/2-\eta_1\\r_2/2-\eta_2
%			\end{pmatrix}\label{eq:shur1}.
			\end{align*}
		\end{subequations}
		 Applying Schur Complement Lemma to the last inequality, we reach
		\begin{subequations}\label{eq:firststepOpt}
			\begin{align*}
			2Y_{12}\ge\max_{\eta,s,u,r,d}\;& 	-	\sum_{i=1,2} (x_i s_i + 2 y_i \eta_i)
			%-x_1s_1-x_2s_2-2y_1\eta_1-2y_2\eta_2
			+(x_1+x_2-1)\alpha-d_1Y_{11}-d_2Y_{22}\\
			\text{s.t. }&d_is_i\ge\eta_i^2, \;i=1,2\\
	%		&d_2s_2\ge\eta_2^2\\
			&\eta_i\ge u_i,\;i=1,2\\
			&\alpha\ge0\\
			&\begin{pmatrix}
			s_1+s_2-\alpha&u_1&u_2\\
			u_1&d_1&1\\
			u_2&1&d_2
			\end{pmatrix}\succeq0.%\label{eq:firststepA}
			\end{align*}
		\end{subequations}
		Note the SDP constraint implies $d \in D$. Finally, taking the SDP dual of
		the maximization problem we arrive at
	\begin{subequations}
		\begin{align*}
		2Y_{12}\ge\min_{p,q,w,v,W^+} &\;2W^+_{12}\\
		\text{s.t. } & p_iq_i\ge w_i^2, \ p_i, q_i\ge0, \ i=1,2\\
		&v_i \ge0, \ i=1,2\\
%		&p_1q_1\ge w_1^2,p_1\ge0,q_1\ge0\\
%		&p_2q_2\ge w_2^2,p_2\ge0,q_2\ge0\\
%		&v_1\ge0,v_2\ge0\\
%		&W^+\succeq0\\
		&q_i+W^+_{33}=x_i,\ i=1,2 \\
		&p_i+W^+_{ii}=Y_{ii}, \ i=1,2 \\
		&2w_i+v_i=2y_i,  \ i=1,2 \\
%		&q_1+W^+_{33}=x_1,q_2+W^+_{33}=x_2\\
%		&p_1+W^+_{11}=Y_{11},p_2+W^+_{22}=Y_{22}\\
%		&2w_1+v_1=2y_1,2w_2+v_2=2y_2\\
		&2W^+_{3i}= v_i, \ i=1,2 \\
%		&-v_1+2W^+_{31}=0\\
%		&-v_2+2W^+_{32}=0\\
		&\beta-W^+_{33}=1-x_1-x_2\\
		&\beta\ge0, \ W^+\succeq0.
		\end{align*}
	\end{subequations}
	Substituting out $p,q,w,v,\beta$, we arrive at \eqref{eq:positiveRelaxation}.
\end{proof}

The proof of Lemma~\ref{lemma:Wneg} is similar and is omitted for brevity. Since both
\eqref{eq:relaxationPos} and \eqref{eq:relaxationNeg} are valid, 
using \eqref{eq:positiveRelaxation} and \eqref{eq:negativeRelaxation} together,  
one can obtain an SDP relaxation of \CQI. While inequalities in \eqref{eq:positiveRelaxation} and \eqref{eq:negativeRelaxation} are quite similar, in general, $W^+$ and $W^-$ do not have to coincide. However, we show below that choosing $W^+ = W^-$, the resulting SDP formulation is still valid %\eqref{eq:mixed_integer_problem}, \eqref{eq:rankTwoWithNonnegativeVariables}, 
and it is at least as strong as the strengthening obtained by valid inequalities \eqref{eq:relaxationFormF}. 

%We show now how to get an SDP relaxation for \CQI.
%Our goal is to prove the following proposition.
%\todo{where do we use the assumption $Q$ is psd?}

Let $\W$ be the set of points  $(x_1,x_2,y_1,y_2,Y_{11},Y_{12},Y_{22})$ such that there exists a $3 \times 3$ matrix $W$ satisfying 
\begin{subequations}\label{eq:sdppos}
\begin{align}
	&W_{12}= Y_{12} \label{eq:sdpposW12}\\
	&(Y_{11}-W_{11})(x_1-W_{33})\ge(y_1-W_{31})^2,W_{11}\le Y_{11},W_{33}\le x_1   \label{eq:sdpposW11}\\
	&(Y_{22}-W_{22})(x_2-W_{33})\ge(y_2-W_{32})^2,W_{22}\le Y_{22},W_{33}\le x_2  \label{eq:sdpposW22}\\
	&0 \le W_{31}\le y_1, \, 0 \le W_{32}\le y_2 \\ 
	&W_{33}\ge x_1+x_2-1 \\ 
	&W\succeq0 
\end{align}
\end{subequations}
Then, using \W for every pair of indices, we can define the strengthened SDP formulation
	\begin{subequations}\label{eq:rankTwoWithNonnegativeVariables}
	\begin{align}
	\min\; &a'x+b'y+\innerProd{Q}{Y}&\\
	(\sdppos) \ \ \		\text{s.t. }&Y-yy'\succeq0&\\\
	&(x_i, x_j, y_i, y_j, Y_{ii}, Y_{ij}, Y_{jj}) \in \W&\forall i< j \label{eq:inW} \\
	&	0 \le x \le 1, \ y \ge 0.
	\end{align}
\end{subequations}

\begin{proposition}\label{prop:sdppos}
	\sdppos \ is a valid convex relaxation of \CQI and every feasible solution to it satisfies all valid inequalities \eqref{eq:relaxationFormF}.
\end{proposition}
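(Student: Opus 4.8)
The plan is to prove the two assertions of Proposition~\ref{prop:sdppos} separately: that \sdppos is a valid relaxation (its feasible region contains every integer solution of \CQI lifted via $Y=yy'$, with matching objective), and that its feasible region lies inside the region carved out by \eqref{eq:relaxationFormF}. Convexity requires no work: for each pair the system \eqref{eq:sdppos} consists of the rotated second-order-cone constraints \eqref{eq:sdpposW11}--\eqref{eq:sdpposW22}, linear constraints, and $W\succeq0$, so \W is the projection of a convex (indeed SDP-representable) set and hence convex; together with the linear objective $a'x+b'y+\innerProd{Q}{Y}$, this makes \sdppos a convex program.

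For the strength assertion I would exploit the fact that \eqref{eq:sdppos} is exactly what one obtains by requiring a \emph{single} matrix $W$ to satisfy the constraints of both \eqref{eq:positiveRelaxation} and \eqref{eq:negativeRelaxation} at once. Fix a feasible point of \sdppos and a pair $i<j$, and let $W$ be the matrix certifying membership in \W, with \eqref{eq:sdppos} written in the local coordinates $1\leftrightarrow i$, $2\leftrightarrow j$. Setting $W^+:=W$ and checking \eqref{eq:positiveRelaxation}: the equality $W_{12}=Y_{12}$ of \eqref{eq:sdpposW12} gives $W_{12}\le Y_{12}$; the two-sided bounds $0\le W_{31}\le y_1$, $0\le W_{32}\le y_2$ give $W_{31},W_{32}\ge0$; and \eqref{eq:sdpposW11}--\eqref{eq:sdpposW22} together with $W_{33}\ge x_1+x_2-1$ and $W\succeq0$ are shared verbatim, so Lemma~\ref{lemma:Wplus} certifies \eqref{eq:relaxationPos}. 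Symmetrically, $W^-:=W$ satisfies \eqref{eq:negativeRelaxation}, now reading $W_{12}\ge Y_{12}$ and $W_{31}\le y_1$, $W_{32}\le y_2$ off the same constraints, so Lemma~\ref{lemma:Wneg} certifies \eqref{eq:relaxationNeg}. Since this holds for every pair, every feasible point satisfies all of \eqref{eq:relaxationFormF}.

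For validity I would construct, for each $(x,y)\in\I{n}$ with $Y=yy'$ and each pair, an explicit $W$ solving \eqref{eq:sdppos}; then $\innerProd{Q}{Y}=y'Qy$ preserves the objective while $Y-yy'=0\succeq0$, $0\le x\le1$, $y\ge0$ hold trivially. Writing the pair's data locally as $(x_1,x_2,y_1,y_2,y_1^2,y_1y_2,y_2^2)$ with $x_1,x_2\in\{0,1\}$, I split on $x_1+x_2$. If $x_1+x_2\le1$, complementarity forces $y_1y_2=0$, hence $Y_{12}=0$, and $W=0$ works: \eqref{eq:sdpposW11} reduces to $y_1^2x_1\ge y_1^2$, true when $x_1=1$ and true by $y_1=0$ when $x_1=0$ (likewise \eqref{eq:sdpposW22}), and $W_{33}=0\ge x_1+x_2-1$. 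If $x_1=x_2=1$, I take the rank-one $W=vv^{\top}$ with $v=(y_1,y_2,1)^{\top}$; then $W_{33}=1$ makes $x_1-W_{33}=x_2-W_{33}=0$, so \eqref{eq:sdpposW11}--\eqref{eq:sdpposW22} hold with equality (since $W_{31}=y_1$, $W_{32}=y_2$), $W_{12}=y_1y_2=Y_{12}$ matches \eqref{eq:sdpposW12}, and $W_{33}=1\ge x_1+x_2-1$. These cases exhaust all integer configurations.

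The only delicate point is validity, precisely because \eqref{eq:sdppos} tightens \emph{both} lemmas simultaneously — imposing the equality $W_{12}=Y_{12}$ and the two-sided bounds $0\le W_{3k}\le y_k$ — so I must confirm that forcing one $W$ to meet both sets of requirements excludes no integer point. The rank-one certificate in the case $x_1=x_2=1$ is exactly what reconciles $W_{12}=Y_{12}=y_1y_2$ with the now-tight cone constraints, and $W=0$ dispatches the rest. By contrast the strength half is immediate once one observes that \eqref{eq:sdppos} nests inside both \eqref{eq:positiveRelaxation} and \eqref{eq:negativeRelaxation}.
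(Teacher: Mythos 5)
Your proof is correct and takes essentially the same route as the paper's: validity is established with the same two certificates (the rank-one matrix $vv'$ with $v=(y_i,y_j,1)'$ when $x_i=x_j=1$, and $W=0$ when some $x$ is zero), and the strength claim follows by observing that any $W$ feasible for \eqref{eq:sdppos} serves simultaneously as $W^+$ in Lemma~\ref{lemma:Wplus} and as $W^-$ in Lemma~\ref{lemma:Wneg}. The only difference is that you spell out the constraint verifications and the convexity remark that the paper leaves implicit.
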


%\ignore{
%\begin{proposition}\label{prop:sdppos}
%	The following formulation is a valid convex relaxation of \CQI and every feasible solution to it satisfies all valid inequalities \eqref{eq:relaxationFormF}.
%	\begin{subequations}\label{eq:rankTwoWithNonnegativeVariables}
%		\begin{align}
%		\min\; &a'x+b'y+\innerProd{Q}{Y}&\\
%		\text{s.t. }&Y-yy'\succeq0&\\\
%		&W^{(ij)}_{12}= Y_{ij}&\forall i< j\\
%		&(Y_{ii}-W^{(ij)}_{11})(x_i-W^{(ij)}_{33})\ge(y_i-W^{(ij)}_{31})^2,W^{(ij)}_{11}\le Y_{ii},W^{(ij)}_{33}\le x_i&\forall i< j \label{eq:sdpposi}\\
%		&(Y_{jj}-W^{(ij)}_{22})(x_j-W^{(ij)}_{33})\ge(y_j-W^{(ij)}_{32})^2,W^{(ij)}_{22}\le Y_{jj},W^{(ij)}_{33}\le x_j&\forall i< j\label{eq:sdpposj}\\
%		&W^{(ij)}_{31}\ge0,W^{(ij)}_{32}\ge0&\forall i< j\label{eq:omega2}\\
%		&W^{(ij)}_{31}\le y_i,W^{(ij)}_{32}\le y_j&\forall i< j\label{eq:omega3}\\
%		&W^{(ij)}_{33}\ge x_i+x_j-1&\forall i< j\label{eq:omega4}\\
%		&W^{(ij)}\succeq0&\forall i< j\label{eq:omega1} \\
%		& 0\le x_i\le 1 & \forall i\label{eq:omega5}
%		\end{align}
%	\end{subequations}
%\end{proposition}
%}

\begin{proof}%[Proof of Proposition~\ref{prop:sdppos}]
	To see that \sdppos \ is a valid relaxation, consider a feasible solution $(x,y)$ of \CQI and let $Y=yy'$. For $i<j$, if $x_i=x_j=1$, constraint \eqref{eq:inW} is satisfied with $W=\begin{psmallmatrix}
	Y_{ii}&Y_{ij}&{y_i}\\
	Y_{ij}&Y_{jj}&y_j\\
	y_i&y_j&1
	\end{psmallmatrix}.$ Otherwise, without loss of generality, one may assume $x_i=0$. It follows that $Y_{ii}=y_i^2=Y_{ij}=y_iy_j=0$. Then, constraint \eqref{eq:inW} is satisfied with $W=0$.
 Moreover, if $W$ satisfies \eqref{eq:inW}, then $W$ satisfies \eqref{eq:positiveRelaxation} and \eqref{eq:negativeRelaxation} simultaneously.
 % which implies \eqref{eq:rankTwoWithNonnegativeVariables} is at least as strong as \eqref{eq:relaxationFormF}.
\end{proof}

%We refer to formulation \eqref{eq:rankTwoWithNonnegativeVariables} as \sdppos. 

\section{Comparison of convex relaxations}\label{sec:comparison}
In this section, we compare the strength of \sdppos \ with other convex relaxations of \CQI.
The perspective relaxation and the optimal perspective relaxation \optPersp \ for \CQI are well-known. A summary of the comparisons of strength is represented in Figure~\ref{fig:formulationRelation} in Section~\ref{sec:intro}.

%\subsection{Optimal perspective relaxations \& Shor's SDP} 
%We now show \optPersp is weaker than  \sdppos.
\begin{proposition}\label{prop:optPersp}
 \sdppos \ is at least as strong as  \optPersp and  \sdpstandard.
\end{proposition}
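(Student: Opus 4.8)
The plan is to exploit that \sdppos and \optPersp minimize the \emph{same} objective $a'x+b'y+\innerProd{Q}{Y}$ subject to the common constraints $Y-yy'\succeq 0$, $0\le x\le 1$, $y\ge 0$, so that establishing the relative strength reduces to a containment of feasible regions in $(x,y,Y)$-space. Since \optPersp differs from these common constraints only through the perspective inequalities $y_i^2\le Y_{ii}x_i$ for $i\in[n]$, it suffices to show that whenever $(x,y,Y)$ extends to a feasible point of \sdppos, every perspective inequality holds. As each index $i$ appears in at least one pair $(i,j)$ with $i<j$ (for $n\ge 2$), I would fix such a pair, relabel it as $(1,2)$, and prove that membership of $(x_1,x_2,y_1,y_2,Y_{11},Y_{12},Y_{22})$ in \W forces $y_1^2\le Y_{11}x_1$, and symmetrically $y_2^2\le Y_{22}x_2$.

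The key computation reads off two rotated-cone inequalities from the certificate $W$. From $W\succeq 0$ the $\{1,3\}$ principal minor gives $W_{11}W_{33}\ge W_{31}^2$ with $W_{11},W_{33}\ge 0$, while constraint \eqref{eq:sdpposW11} gives $(Y_{11}-W_{11})(x_1-W_{33})\ge(y_1-W_{31})^2$ with both factors nonnegative; the bounds $0\le W_{31}\le y_1$ make $W_{31}$ and $y_1-W_{31}$ nonnegative as well. I would then invoke the superadditivity of the geometric mean: for nonnegative $u_1,v_1,u_2,v_2$ one has $\sqrt{(u_1+u_2)(v_1+v_2)}\ge\sqrt{u_1v_1}+\sqrt{u_2v_2}$, which follows from the rearrangement $(u_1+u_2)(v_1+v_2)-\bigl(\sqrt{u_1v_1}+\sqrt{u_2v_2}\bigr)^2=\bigl(\sqrt{u_1v_2}-\sqrt{u_2v_1}\bigr)^2\ge 0$. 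Applying this with $(u_1,v_1)=(W_{11},W_{33})$ and $(u_2,v_2)=(Y_{11}-W_{11},x_1-W_{33})$ telescopes the two cone inequalities into $\sqrt{Y_{11}x_1}\ge W_{31}+(y_1-W_{31})=y_1$, i.e.\ $Y_{11}x_1\ge y_1^2$, as desired.

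With the perspective inequalities established for every coordinate, the feasible region of \sdppos projects into that of \optPersp, so \sdppos is at least as strong as \optPersp. Finally, since Theorem~\ref{thm:equivalence} shows \optPersp and \sdpstandard coincide, \sdppos is at least as strong as \sdpstandard as well. The only mildly delicate point is the superadditivity step together with the sign bookkeeping---ensuring all four factors and both remainder terms $W_{31}$ and $y_1-W_{31}$ are nonnegative so the square roots behave---while the rest is routine manipulation across the shared constraints.
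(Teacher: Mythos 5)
Your proof is correct and follows essentially the same route as the paper: reduce to \optPersp via Theorem~\ref{thm:equivalence}, then use the rotated-cone constraints \eqref{eq:sdpposW11}--\eqref{eq:sdpposW22} together with positive semidefiniteness of the $\{1,3\}$ (resp.\ $\{2,3\}$) principal submatrix of $W$ to force the perspective inequality $Y_{ii}x_i\ge y_i^2$. Your geometric-mean superadditivity step is just a scalar rendering of the paper's one-line observation that $\begin{pmatrix} Y_{ii}&y_i\\ y_i&x_i\end{pmatrix}$ is the sum of two PSD $2\times 2$ matrices, hence PSD (and, incidentally, absolute values make your sign bookkeeping on $W_{31}$ unnecessary).
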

\begin{proof}
	By Theorem~\ref{thm:equivalence}, it is sufficient to show the statement for \optPersp.
	Note that \eqref{eq:inW} includes constraints
	\[
	\begin{pmatrix}
	Y_{ii}&y_i\\y_i&x_i
	\end{pmatrix}\succeq 
	\begin{pmatrix}
	W_{11}&W_{31}\\ W_{31}& W_{33}
\end{pmatrix}\succeq 0,
\] 
corresponding to \eqref{eq:sdpposW11}-\eqref{eq:sdpposW22}. 
Thus, the perspective constraints $Y_{ii} x_i \ge y_i^2$ are implied.
\end{proof}

In the context of linear regression,
%\subsection{Convex relaxation using rank-one with free continuous variables}
\citet{atamturk2019rank} study the convex hull of the epigraph of rank-one quadratic with indicators
\[ \set X_f=\left\{ (x,y,t)\in \{0,1\}^n \times\R^{n+1} :t\ge \bigg (\sum_{i=1}^n y_i \bigg )^2, \ y_i(1-x_i)=0, i \in [n] \right\}, \]
where the continuous variables are unrestricted in sign. Their extended SDP formulation
based on $\cl\conv(\set X_f)$, leads to the following relaxation for \CQI
\begin{subequations}\label{eq:rank-one}%\tag{\sdpfree}
\begin{align}
\min\;& a'x+b'y+\langle Q,Y\rangle\\
\text{s.t.}\;& Y - yy' \succeq 0 \\
&y_i^2\leq Y_{ii}x_i&\forall i\\
(\sdpfree) \ \ \  &\begin{pmatrix}x_i+x_j& y_i &y_j\\
y_i & Y_{ii} & Y_{ij}\\
y_j & Y_{ij}& Y_{jj}\end{pmatrix}\succeq 0, &\forall i < j \label{eq:sdpf:pair}\\
&y \ge 0, \ 0\le x\le 1.
\end{align}
\end{subequations} 

% \citet{atamturk2019rank} show \sdpfree is stronger than \optPersp. Thus, by Theorem~\ref{thm:equivalence}, \sdpfree is stronger than \sdpstandard \, as well. 
With the additional constraints \eqref{eq:sdpf:pair}, 
it is immediate that \sdpfree \ is stronger than \optPersp.
The following proposition compares \sdpfree and \sdppos. 
\begin{proposition}
	\sdppos \ is at least as strong as \sdpfree.
\end{proposition}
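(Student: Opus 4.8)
The plan is to show the feasible region of \sdppos{} is contained in that of \sdpfree{}; since the two relaxations share the same objective, this immediately gives that \sdppos{} is at least as strong. Take any $(x,y,Y)$ feasible for \sdppos{}. The constraints $Y-yy'\succeq 0$, $0\le x\le 1$ and $y\ge 0$ are common to both formulations, and the perspective inequalities $y_i^2\le Y_{ii}x_i$ hold by Proposition~\ref{prop:optPersp}. Hence the only thing left to verify is the pairwise semidefinite constraint \eqref{eq:sdpf:pair} for every pair $i<j$.

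Fix a pair and relabel the indices as $1,2$ to lighten notation. By \eqref{eq:inW} there is a matrix $W\in\symM^3_+$ satisfying \eqref{eq:sdppos}. I would prove \eqref{eq:sdpf:pair} by exhibiting an explicit decomposition of
\[ M:=\begin{pmatrix} x_1+x_2 & y_1 & y_2\\ y_1 & Y_{11} & Y_{12}\\ y_2 & Y_{12} & Y_{22}\end{pmatrix} \]
into a sum of two PSD matrices. Let $A$ be the matrix obtained from $W$ by the symmetric permutation that moves the third index to the front, namely
\[ A:=\begin{pmatrix} W_{33} & W_{31} & W_{32}\\ W_{31} & W_{11} & W_{12}\\ W_{32} & W_{12} & W_{22}\end{pmatrix}. \]
Being a congruence $A=\transpose{P}WP$ of $W$ by a permutation matrix $P$, it inherits $A\succeq 0$.

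It then remains to check $M-A\succeq 0$. The key simplification is $W_{12}=Y_{12}$ from \eqref{eq:sdpposW12}, which annihilates the $(2,3)$ entry of $M-A$ and reduces it to the bordered (``arrow'') matrix
\[ M-A=\begin{pmatrix} a & p_1 & p_2\\ p_1 & q_1 & 0\\ p_2 & 0 & q_2\end{pmatrix},\qquad a:=x_1+x_2-W_{33},\ \ p_i:=y_i-W_{3i},\ \ q_i:=Y_{ii}-W_{ii}. \]
I would establish $M-A\succeq 0$ by checking that all its principal minors are nonnegative. From \eqref{eq:sdpposW11}--\eqref{eq:sdpposW22}, $W_{33}\le x_i$ gives $a\ge x_i-W_{33}\ge 0$ and $W_{ii}\le Y_{ii}$ gives $q_i\ge 0$; combined with the rotated-cone bounds $q_i(x_i-W_{33})\ge p_i^2$ this yields $aq_i-p_i^2\ge (x_i-W_{33})q_i-p_i^2\ge 0$ for the two nontrivial $2\times2$ minors (the third is $q_1q_2\ge0$). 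For the determinant, set $\alpha_i:=x_i-W_{33}\ge 0$ and use $W_{33}\ge 0$ (a diagonal entry of the PSD matrix $W$); then $p_i^2\le \alpha_i q_i$ gives
\[ \det(M-A)=a\,q_1q_2-p_1^2q_2-p_2^2q_1\ \ge\ q_1q_2\,(a-\alpha_1-\alpha_2)=q_1q_2\,W_{33}\ge 0. \]
Hence $M-A\succeq 0$, so $M=A+(M-A)$ is PSD and \eqref{eq:sdpf:pair} holds.

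The main obstacle is guessing the correct decomposition $M=A+(M-A)$: once the permuted copy $A$ of $W$ is peeled off, the equality $W_{12}=Y_{12}$ is exactly what is needed to kill the off-diagonal coupling and leave an arrow matrix whose positive semidefiniteness is controlled entirely by the rotated-cone constraints of \eqref{eq:sdppos}. Everything after that is the routine principal-minor verification sketched above, and it notably does not require the bounds $W_{3i}\le y_i$ or $W_{33}\ge x_1+x_2-1$, only $W\succeq 0$ together with \eqref{eq:sdpposW12}--\eqref{eq:sdpposW22}.
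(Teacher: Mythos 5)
Your proof is correct, but it takes a genuinely different route from the paper's. The paper substitutes the rotated-cone upper bounds on $W_{11},W_{22}$ and the equality $W_{12}=Y_{12}$ into $W\succeq 0$, invokes the Schur Complement Lemma to pass to an auxiliary $5\times 5$ PSD matrix, performs congruence operations (adding rows/columns), and finally extracts a $3\times 3$ principal submatrix and uses $W_{33}\ge 0$ to reach \eqref{eq:sdpf:pair}. You instead peel off the permuted copy $A$ of $W$ from the target matrix $M$ and show the residual $M-A$ is PSD directly; the equality \eqref{eq:sdpposW12} reduces $M-A$ to an arrow matrix, whose positive semidefiniteness you verify by checking \emph{all} principal minors (three $1\times 1$, three $2\times 2$, and the determinant bound $\det(M-A)\ge q_1q_2\,W_{33}\ge 0$) --- and this exhaustive check is essential, since nonnegativity of only the leading minors would not suffice for semidefiniteness. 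Both arguments consume the same constraint information ($W\succeq 0$, \eqref{eq:sdpposW12}, the rotated-cone inequalities and the side bounds $W_{ii}\le Y_{ii}$, $W_{33}\le x_i$), and neither needs $0\le W_{3i}\le y_i$ or $W_{33}\ge x_1+x_2-1$, though your version makes this transparent. What your route buys is economy: no Schur complement, no enlarged matrix, and a clean identification of exactly where $W_{33}\ge 0$ enters. What the paper's route buys is continuity with the machinery already deployed in the proof of Lemma~\ref{lemma:Wplus} (dualization and Schur complements), at the cost of a longer chain of matrix manipulations.
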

\begin{proof}
	\ignore{
	If we relax \eqref{eq:sdpposi} and \eqref{eq:sdpposj} by replacing $x_i$ and $x_j$ with $\max\{x_i,x_j \}$, then we get a valid but weaker relaxation, which is equivalent to the following formulation by eliminating $W^{(ij)}$

	Relaxing constraints
	\[
		\begin{pmatrix}
	Y_{ii}&y_i\\y_i&x_i
	\end{pmatrix}\succeq 0, \ \ 
			\begin{pmatrix}
	Y_{ii}&y_i\\y_i&x_i
	\end{pmatrix} \succeq 0, \ \ 
	\]
}

It suffices to show that for each pair $i<j$, constraint \eqref{eq:inW} of \sdppos implies \eqref{eq:sdpf:pair} of \sdpfree.
Rewriting \eqref{eq:sdpposW11}--\eqref{eq:sdpposW22}, we get
\[ W_{11}\le Y_{11}-\frac{(y_1-W_{31})^2}{x_1-W_{33}}, \; W_{22}\le Y_{22}-\frac{(y_2-W_{32})^2}{x_2-W_{33}} \cdot \]
Combining the above and \eqref{eq:sdpposW12} to substitute out $W_{11},W_{22}$ and $W_{12}$ in $W\succeq 0$, we arrive at
\[\begin{pmatrix}
Y_{11}-\frac{(y_1-W_{31})^2}{x_1-W_{33}}&Y_{12}&W_{31}\\
Y_{12}&Y_{22}-\frac{(y_2-W_{32})^2}{x_2-W_{33}}&W_{32}\\
W_{31}&W_{32}&W_{33}
\end{pmatrix} \succeq0,\; W_{33}\le x_1,\; W_{33} \le x_2,\]
which is equivalent to the following matrix inequality by Shur Complement Lemma
\[ \begin{pmatrix}
Y_{11}&Y_{12}&W_{31}&y_1-W_{31}&0\\
Y_{12}&Y_{22}&W_{32}&0&y_2-W_{32}\\
W_{31}&W_{32}&W_{33}&0&0\\
y_1-W_{31}&0&0&x_1-W_{33}&0\\
0&y_2-W_{32}&0&0&x_2-W_{33}
\end{pmatrix} \succeq0. \]
By adding the third row/column to the forth row/column and then adding the forth row/column to the fifth row/column, the large matrix inequality can be rewritten as
\[ \begin{pmatrix}
Y_{11}&Y_{12}&W_{31}&y_1&y_1\\
Y_{12}&Y_{22}&W_{32}&W_{32}&y_2\\
W_{31}&W_{32}&W_{33}&W_{33}&W_{33}\\
y_1&W_{32}&W_{33}&x_1&x_1\\
y_1&y_2&W_{33}&x_1&x_1+x_2-W_{33}
\end{pmatrix} \succeq0. \]
Because $W_{33}\ge0$, it follows that  
\[\begin{pmatrix}
Y_{11}&Y_{12}&y_1\\
Y_{12}&Y_{22}&y_2\\
y_1&y_2&x_1+x_2
\end{pmatrix}\succeq \begin{pmatrix}
Y_{11}&Y_{12}&y_1\\
Y_{12}&Y_{22}&y_2\\
y_1&y_2&x_1+x_2-W_{33}
\end{pmatrix}\succeq 0.\]
Therefore, constraints \eqref{eq:sdpf:pair} are implied by \eqref{eq:inW}, 
proving the claim.
%Replacing $x_i$ and $x_j$ in \eqref{eq:inW} with $\max\{x_i,x_j \}$, 
%we get a valid but weaker relaxation, which is equivalent to the following formulation after 
%eliminating $W$
%	\begin{subequations}\label{eq:compareWithRankOne}
%		\begin{align}
%		\min\; &a'x+b'y+\innerProd{Q}{Y}&\\
%		\text{s.t. }&y_i^2\le Y_{ii}x_i&\forall i\\
%		&Y-yy'\succeq0&\\
%		&\begin{pmatrix}
%		\max\{x_i,x_j\}&y_i&y_j\\
%		y_i&Y_{ii}&Y_{ij}\\
%		y_j&Y_{ij}&Y_{jj}
%		\end{pmatrix}\succeq0&\forall i\neq j\\
%		&y \ge 0, \ 0\le x\le 1.
%		\end{align}
%	\end{subequations}
%Relaxing the non-concave function $\max\{x_i,x_j \}$ with its concave envelope  $\min\{1, x_i+x_j\}$, we arrive at 
%\sdpfree. 
%	However, since $\max\{x_i,x_j \}$ is not a concave function of $x$, such a valid formulation is not convex. If we further replace $\max\{x_i,x_j\}$ with its concave envelope, i.e. $\min\{1, x_i+x_j\}$, the resulting formulation is valid but  further weaker  and is nothing but the convex relaxation using rank-one with free variables, i.e. \sdpfree. %We have shown \sdppos is at least as strong as \sdpfree. 
\end{proof}

The example below illustrates that \sdppos \ is indeed strictly stronger than  \optPersp=  \sdpstandard, and \sdpfree.

\begin{example}
	For $n=2$, \sdppos \  is the ideal (convex) formulation of \CQI. For the instance of \CQI with
   \[a=\begin{pmatrix}
	1\\5
	\end{pmatrix}, b=\begin{pmatrix}
	-8\\-5
	\end{pmatrix}, Q=\begin{pmatrix}
	5&2\\
	2&1\\
	\end{pmatrix}\]	
	each of the other convex relaxations has a fractional optimal solution as demonstrated in  Table~\ref{tab:comparison}.
	\begin{table}[htbp]	
		\centering 
		\caption{Comparison of convex relaxations of \CQI.}
		\begin{tabular}{l|lllll} 
			\hline \hline
				&obj val	&$x_1$&$x_2$	&$y_1$&$y_2$\\\bottomrule
%			\nat&-6.250	&0.0	&0.0					&0.0	&2.500\\
			\optPersp&-2.866&0.049&0.268&0.208&1.369\\
			\sdpstandard&-2.866&0.049&0.268&0.208&1.369\\
			\sdpfree& -2.222&0.551& 0.449&0.0&2.007\\
			\sdppos&-2.200&1.0&0.0&0.800&0.0\\\hline \hline
		\end{tabular}\label{tab:comparison}
	\end{table}
	
Notably, the fractional $x$ values for \optPersp, \sdpstandard, and 
\sdpfree \ are far from their optimal integer values. A common approach to quickly obtain feasible solutions to NP-hard problems is to round a solution obtained from a suitable convex relaxation. This example inidcates that feasible solutions obtained in this way from formulation \sdppos may be of higher quality than those obtained from weaker relaxations -- our computations in \S\ref{sec:comp_real} further corroborates this intuition.\qed
%	where $x_{nat}$ is the optimal solution provided by the natural relaxations of \eqref{eq:mixed_integer_problem} and others are optimal solutions provided by the formulations indexed by their names, correspondingly.
\end{example}

%\subsection{Convex relaxations via decomposition}
%\todo{I don't move this part as we discussed because Proposition~\ref{prop:optPersp} is required in the proof.}

An alternative way of constructing strong relaxations for \CQI is to decompose the quadratic function $y'Qy$ into a sum of univariate and bivariate convex quadratic functions and utilize the convex hull results of $2\times2$ quadratics 
\[
\alpha_{ij}q_{ij} (y_i, y_j) = \beta_{ij}x_{i}^2 \pm 2 y_i y_j + \gamma_{ij} y_{j}^2,
\]
where $\alpha_{ij} > 0$,
in Section~\ref{sec:convexHull} for each term, see \cite{frangioni2020decompositions} for such an approach. 
Specifically, let
\[
y'Qy = y'Dy + \sum_{(i,j) \in \mathcal{P}} \alpha_{ij} q_{ij}(y_i, y_j) +  
\sum_{(i,j) \in \mathcal{N}} \alpha_{ij} q_{ij}(y_i, y_j) + y'Ry
\]
where $D$ is a diagonal PSD matrix, 
$\mathcal{P}/\mathcal{N}$ is the set of quadratics $q_{ij}(\cdot)$ with positive/negative off-diagonals
 and $R$ is PSD remainder matrix. 
 %\todo{We use $D$ to define the set $d_1d_2\geq 1$, and we use to $P$ to define a general $2\times2$ matrix at the beginning of this section. Can we change the notation here or elsewhere, to avoid overloading notation? \\ Shaoning : DOne.} 
 Applying the convex hull description for each univariate and bivariate term
 we obtain the convex relaxation
\begin{equation*}\label{eq:relaxationDecomposition}
\begin{aligned}
\min\;&a'x+b'y+\sum_{i=1}^n D_{ii} y_i^2/x_i +\sum_{(i,j)\in \mathcal{P}}\alpha_{ij}f^*_+(x_i,x_j,y_i,y_j;\beta_{ij},\gamma_{ij})  \\
(\decomp) \ \ \ &+\sum_{(i,j)\in \mathcal{N}}\alpha_{ij}f^*_-(x_i,x_j,y_i,y_j;\beta_{ij},\gamma_{ij}) +y'Ry\\
\text{s.t.}\;& \ 0\le x\le 1, \ y \ge 0
\end{aligned}
\end{equation*}
for \CQI.

%\ignore{
%By utilizing Proposition~\ref{prop:extendedPos} and Remark~\ref{remark:extendedNeg}, we can obtain a convex relaxation of \eqref{eq:mixed_integer_problem} via decomposition scheme. We now show  formulation \eqref{eq:relaxationFormF} is at least as strong as the relaxation that optimally decomposes $Q$ into sums of a weakly diagonally dominant matrix and a "remainder" positive semidefinite term, and uses the perspective relaxation and functions $f_+^*$ and $f_-^*$ to strengthen the one and two-dimensional terms induced by the weakly diagonally dominant matrix.
%
%Precisely, assume the following holds for any $y\in\R^n$
%\[ y'Qy=y'Dy+\sum_{(i,j)\in \Gamma_+}c_{ij}(y_i,y_j)Q^{(ij)}(y_i,y_j)'+\sum_{(i,j)\in\Gamma_-}c_{ij}(y_i,y_j)Q^{(ij)}(y_i,y_j)' +y'Ry, \]
%where $D$ is a diagonal PSD matrix, each $c_{ij}$ is a positive number, $Q^{(ij)},(i,j)\in\Gamma_+$ is a $2\times2$ PSD matrix with the off-diagonal element as 1,  $Q^{(ij)},(i,j)\in\Gamma_-$ is a $2\times2$ PSD matrix with the off-diagonal element as -1, and $R$ is a PSD matrix. Then we have the following convex relaxation of \eqref{eq:mixed_integer_problem}.
%\begin{equation*}\label{eq:relaxationDecomposition}
%\begin{aligned}
%\min\;&a'x+b'y+\sum_{i=1}^n D_{ii} y_i^2/x_i +\sum_{(i,j)\in \Gamma_+}c_{ij}f^*_+(x_i,x_j,y_i,y_j,Q^{(ij)}_{11},Q^{(ij)}_{22})  \\
%(\decomp) \ \ \ &+\sum_{(i,j)\in \Gamma_-}c_{ij}f^*_-(x_i,x_j,y_i,y_j,Q^{(ij)}_{11},Q^{(ij)}_{22})+y'Ry\\
%\text{s.t.}\;& y \ge 0, 0\le x \le 1.
%\end{aligned}
%\end{equation*}
%}

The next proposition shows that \sdppos \  dominates \decomp for any such decomposition of the quadratic $y'Qy$.
\begin{proposition}\label{prop:relaxationViaDecomposotion}
	\sdppos \ is at least as strong as \decomp. 
\end{proposition}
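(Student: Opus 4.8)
The plan is to compare the two relaxations term by term against the decomposition of $Q$, showing that any feasible solution of \sdppos has objective value at least as large as the \decomp objective evaluated at its $(x,y)$-projection. Since \decomp lives in the $(x,y)$ space while \sdppos carries the extra matrix variable $Y$, I would fix an arbitrary feasible point $(x,y,Y)$ of \sdppos, note that $(x,y)$ is automatically feasible for \decomp (both share only the box constraints $0\le x\le 1$, $y\ge 0$), and reduce the claim to the single inequality
\[
\langle Q,Y\rangle \ \ge\ \sum_i D_{ii}\frac{y_i^2}{x_i}+\sum_{(i,j)\in\mathcal P}\alpha_{ij}f^*_+(x_i,x_j,y_i,y_j;\beta_{ij},\gamma_{ij})+\sum_{(i,j)\in\mathcal N}\alpha_{ij}f^*_-(x_i,x_j,y_i,y_j;\beta_{ij},\gamma_{ij})+y'Ry.
\]
Taking the minimum over the feasible region of \sdppos then yields domination of the optimal values.

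First I would expand $\langle Q,Y\rangle$ using $Q=D+\sum_{\mathcal P}\alpha_{ij}Q^{(ij)}+\sum_{\mathcal N}\alpha_{ij}Q^{(ij)}+R$, where $Q^{(ij)}$ is the $2\times 2$ pattern realizing $q_{ij}$, so that $\langle Q,Y\rangle$ splits into a diagonal part $\langle D,Y\rangle$, one bilinear-pair contribution $\alpha_{ij}\langle Q^{(ij)},Y\rangle$ per $(i,j)$, and a remainder $\langle R,Y\rangle$. Each summand is then matched against the corresponding piece of the \decomp objective. The diagonal part is handled by the perspective bound $Y_{ii}x_i\ge y_i^2$, which is implied by \sdppos (Proposition~\ref{prop:optPersp}); since $D_{ii}\ge 0$ this gives $D_{ii}Y_{ii}\ge D_{ii}y_i^2/x_i$.

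Each pair contribution is controlled by Proposition~\ref{prop:sdppos}: every feasible solution of \sdppos satisfies the optimal valid inequalities \eqref{eq:relaxationFormF}. Evaluating \eqref{eq:relaxationPos} at $d=(\beta_{ij},\gamma_{ij})\in\D$ yields $f^*_+(x_i,x_j,y_i,y_j;\beta_{ij},\gamma_{ij})\le \beta_{ij}Y_{ii}+\gamma_{ij}Y_{jj}+2Y_{ij}=\langle Q^{(ij)},Y\rangle$, and likewise \eqref{eq:relaxationNeg} covers the negative-off-diagonal pairs; multiplying by $\alpha_{ij}>0$ preserves the inequalities. Finally the remainder is handled by $Y-yy'\succeq 0$ together with $R\succeq 0$: taking the inner product of these two PSD matrices gives $\langle R,Y\rangle\ge\langle R,yy'\rangle=y'Ry$. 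Summing the diagonal, pairwise, and remainder inequalities produces the displayed bound, and adding $a'x+b'y$ to both sides completes the comparison of objectives.

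The argument is essentially bookkeeping once the right valid inequalities are invoked, so the only subtle point is to confirm that the convexity hypothesis on each $q_{ij}$ (namely $\beta_{ij}\gamma_{ij}\ge 1$, equivalently $Q^{(ij)}\succeq 0$) places $(\beta_{ij},\gamma_{ij})$ inside the admissible set $\D$, so that \eqref{eq:relaxationFormF} genuinely applies with those coefficients. I do not anticipate a real obstacle; the main care is in correctly aligning the sign conventions ($+2Y_{ij}$ for $\mathcal P$ and $-2Y_{ij}$ for $\mathcal N$) between the definition of $Q^{(ij)}$ and the two branches \eqref{eq:relaxationPos}--\eqref{eq:relaxationNeg}.
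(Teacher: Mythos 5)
Your proposal is correct and takes essentially the same approach as the paper's proof: both split $\langle Q,Y\rangle$ along the decomposition $Q=D+\sum_{(i,j)}\alpha_{ij}Q^{(ij)}+R$, bound the diagonal terms via the perspective inequality implied by Proposition~\ref{prop:optPersp}, the pairwise terms via the valid inequalities \eqref{eq:relaxationFormF} guaranteed by Proposition~\ref{prop:sdppos}, and the residual via $\langle R,Y\rangle\ge y'Ry$ from $Y\succeq yy'$ and $R\succeq 0$. Your added check that $(\beta_{ij},\gamma_{ij})\in\D$ is exactly the convexity condition the paper's decomposition assumes implicitly, so there is no gap.
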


\begin{proof} Let $(x,y,Y)$ be an optimal solution to \sdppos. Then $(x,y)$ is a feasible solution to \decomp.  Using the same decomposition, the objective function of \sdppos can be rewritten as 
	\begin{equation}\label{eq:decompositionObj}
	a'x+b'y+\innerProd{D}{Y}+\sum_{(i,j)\in \mathcal{P} \cup \mathcal{N}}\alpha_{ij}\innerProd{Q^{(ij)}}{Y^{(ij)}}+\innerProd{R}{Y},
	\end{equation}
	where
	$Q^{(ij)}=\begin{pmatrix}
	\beta_{ij}&\pm 1\\ \pm 1&\gamma_{ij}
	\end{pmatrix}$ and $Y^{(ij)}=\begin{pmatrix}
	Y_{ii}&Y_{ij}\\Y_{ij}&Y_{jj}
	\end{pmatrix}$.
	By Proposition~\ref{prop:optPersp}, $Y_{ii}x_i\ge y_i^2$ holds for all $i$. Hence, 
	\begin{equation}\label{eq:decompositionDiagonal}
	\innerProd{D}{Y}\ge \sum_{i=1}^nD_{ii}y_i^2/x_i.
	\end{equation}	
	By Proposition~\ref{prop:sdppos} and utilizing \eqref{eq:relaxationPos} and \eqref{eq:relaxationNeg}, we have
	\begin{subequations}\label{eq:decomposition2by2}
		\begin{align}
		\innerProd{Q^{(ij)}}{Y^{(ij)}}\ge f^*_+(x_i,x_j,y_i,y_j;\beta_{ij},\gamma_{ij})&\;\forall (i,j)\in \mathcal{P} \\
		\innerProd{Q^{(ij)}}{Y^{(ij)}}\ge f^*_-(x_i,x_j,y_i,y_j;\beta_{ij},\gamma_{ij})&\;\forall (i,j)\in \mathcal{N}.
		\end{align}
	\end{subequations}	
	Moreover, since $Y\succeq yy'$, we have
	\begin{equation}\label{eq:decompostionResidual}
	\innerProd{R}{Y}\ge \innerProd{R}{yy'}=y'Ry.
	\end{equation} 
	Combining \eqref{eq:decompositionDiagonal}, \eqref{eq:decomposition2by2}, and \eqref{eq:decompostionResidual}, the result follows.
\end{proof}

\section{Computations}\label{sec:computation}

In this section, we report on computational experiments performed to test the effectiveness the formulations derived in the paper. Section~\ref{sec:comp_synt} is devoted to synthetic portfolio optimization instances, where matrix $Q$ is diagonal dominant and the conic quadratic-representable extended formulations developed in Section~\ref{sec:convexHull} can be readily used in a branch-and-bound algorithm without the need for an SDP constraint. In Section~\ref{sec:comp_real}, we use real instances derived from stock market returns and test the SDP relaxation \sdppos derived in Section~\ref{sec:formulation}. 

\subsection{Synthetic instances -- the diagonal dominant case}\label{sec:comp_synt}

We consider a standard cardinality-constrained mean-variance portfolio optimization problem of the form
	\begin{align}
	\label{eq:portfolio}	
	\min_{x,y}\; \left \{y'Qy:
		\begin{array}{ll}
	&b'y\ge r, \ %\label{eq:portfolio_return}, \ 
	1'x\le k  \\ %\label{eq:portfolio_cardinality}\\
	&0\le y\le x, \ %\label{eq:portfolio_ub}, \ 
	x\in\{0,1\}^n %\label{eq:portfolio_int}
	\end{array}
	\right \}
	\end{align}
	\noindent
where $Q$ is the covariance matrix of returns, $b\in \R^n$ is the vector of the expected returns, 
$r$ is the target return and $k$ is the maximum number of securities in the portfolio. 
%We denote $D$ as the continuous relaxation of the feasible region of \eqref{eq:portfolio}.  
All experiments are conducted using Mosek 9.1 solver on a laptop with a 2.30GHz  $\text{Intel}^\text{\textregistered}$  $\text{Core}^{\text{\tiny TM}}$ i9-9880H CPU and 64 GB main memory. The time limit is set to one hour and all other settings are default by Mosek. 

\subsubsection{Instance generation}
We adopt the method used in \cite{atamturk2018strong} to generate the instances. The instances are designed to control the integrality gap of the instances and the effectiveness of the perspective formulation.
 Let $\rho\ge0$ be a parameter controlling the ratio of the magnitude positive off-diagonal entries of $Q$ to the magnitude of the negative off-diagonal entries of $Q$. Lower values of $\rho$ lead to higher integrality gaps.
 Let $\delta\ge0$ be the parameter controlling the diagonal dominance of $Q$. The perspective formulation is more effective in closing the integrality gap for higher values of $\delta$. The following steps are followed to generate the instances:
\begin{itemize}
	\item Construct an auxiliary matrix $\bar Q$ by drawing a factor covariance matrix $G_{20\times20}$ uniformly from $[-1,1]$, and generating an exposure matrix $H_{n\times 20}$ such that $H_{ij}=0$ with probability 0.75, and $H_{ij}$ drawn uniformly from $[0,1]$, otherwise. Let $\bar{Q}=HGG'H'$.
	\item  Construct off-diagonal entries of $Q$: For $i\neq j$, set $Q_{ij}=\bar{Q}_{ij}$, if $\bar{Q}_{ij}<0$ and set $Q_{ij}=\rho\bar Q_{ij}$ otherwise. Positive off-diagonal elements of $\bar Q$ are scaled by a factor of $\rho$.
	\item Construct diagonal entries of $Q$: Pick $\mu_i$ uniformly from $[0,\delta\bar \sigma]$, where $\bar \sigma=\frac{1}{n}\sum_{i\neq j}|Q_{ij}|$. Let $Q_{ii}=\sum_{i\neq j}|Q_{ij}|+\mu_i$. Note that if $\delta=\mu_i=0$, then matrix $Q$ is already diagonal dominant.
	\item Construct $b,r,k$: $b_i$ is drawn uniformly from $[0.5Q_{ii},1.5Q_{ii}]$,  $r=0.25\sum_{i=1}^nb_i$, and $k=\lfloor n/5 \rfloor$.
\end{itemize}  
Matrices $Q$ generated in this way have only 20.1\% of the off-diagonal entries negative on average.

\subsubsection{Formulations}
With above setting, the portfolio optimization problem can be rewritten as 
\begin{equation}\label{eq:portfolio_decomp}	
\begin{aligned}
\min&\sum_{i\in [n]}\mu_i z_i +\sum_{Q_{ij}<0}|Q_{ij}|t_{ij} +\sum_{Q_{ij}>0}|Q_{ij}|t_{ij}\\
\text{s.t.}\;\;&(x_i,y_i,z_i)\in \set X_0,\;\forall i\in N,\\
&(x_i,x_j,y_i,y_j,t_{ij})\in \set Z_-,\;\forall i\neq j: Q_{ij}<0,\\
&(x_i,x_j,y_i,y_j,t_{ij})\in \set Z_+,\;\forall i\neq j: Q_{ij}>0,\\
& b'y\ge r, \  1'x\le k, \\
%&\eqref{eq:portfolio_return}-\eqref{eq:portfolio_int},
\end{aligned}
\end{equation}
where $\set Z_+$ and $\set Z_-$ are defined as before with $d_1=d_2=1$.  
Four strong formulations are tested by replacing the mixed-integer sets with their convex hulls: \conicPersp by replacing $\set X_0$ with $\cl\conv(\set X_0)$ using the perspective reformulation
%\todo{We should not refer to this formulation as\optPersp, as we are not finding the optimal decomposition. Can you please change to name (perhaps to ConicQuadPersp?) 
%	and updates text and tables correspondingly?\\Shaoning: Done.}, 
(2) \conicN by replacing $\set X_0$ and $\set Z_-$ with $\cl\conv(\set X_0)$ and $\cl\conv(\set Z_-)$ using the corresponding extended formulation, (3) \conicP by replacing $\set X_0$ and $\set Z_+$ with $\cl\conv(\set X_0)$ and $\cl\conv(\set Z_+)$ respectively, and (4) \conic by replacing $\set X_0$, $\set Z_-$, and $\set Z_+$ with $\cl\conv(\set X_0)$, $\cl\conv(\set Z_-)$ and $\cl\conv(\set Z_+)$, correspondingly. 

\subsubsection{Results}
Table \ref{tab:delta} shows the results for matrices with varying diagonal dominance $\delta$ for $\rho=0.3$. Each row in the table represents the average for five instances generated with the same parameters. Table \ref{tab:delta} displays the dimension of the problem $n$, the initial gap (\texttt{igap}), the root gap improvement (\texttt{rimp}), the number of branch and bound nodes (\texttt{nodes}), the elapsed time in secons (\texttt{time}), and the end gap provided by the solver at termination (\texttt{egap}). In addition, in brackets, we report the number of instances solved to optimality within the time limit. The initial gap is computed as  $\texttt{igap}=\frac{\texttt{obj}_{\texttt{best}}-\texttt{obj}_{\texttt{cont}}}{|\texttt{obj}_{\texttt{best}}|}\times 100$, where
$\texttt{obj}_{\texttt{best}}$ is the objective value of the best feasible solution found and $\texttt{obj}_{\texttt{cont}}$ is the objective value of the natural continuous relaxation of \eqref{eq:portfolio}, i.e. obtained by  dropping the integral constraints; \texttt{rimp} is computed as
$\texttt{rimp}=\frac{\texttt{obj}_{\texttt{relax}}-\texttt{obj}_{\texttt{cont}}}{\texttt{obj}_{\texttt{best}}-\texttt{obj}_{\texttt{cont}}}\times 100$, where $\texttt{obj}_{\texttt{relax}}$ is the objective value of the continuous relaxation of the corresponding formulation.

In Table \ref{tab:delta}, as expected, \conicPersp has the worst performance in terms of both root gap and end gap as well as the solution time. It can only solve instances with dimension $n=40$ and some instances with dimension $n=60$ to optimality. The \texttt{rimp} of \conicPersp is less than 10\% when the diagonal dominance is small. This reflects the fact that \conicPersp provides strengthening only for diagonal terms.  \conicN  performs better than \conicPersp with \texttt{rimp} about 10\%--25\%, and it can solve all low-dimensional instances and most instances of dimension $n=60$. However, \conicN is still unable to solve high-dimensional instances effectively.  \conicP performs much better than \conicN for the instances considered: The \texttt{rimp} results in significantly stronger root improvements (between 70--80\% on average). Moreover,  \conicP can solve almost all instances to near-optimality for $n=80$. For the instances that \conicP is unable to solve to optimality, the average end gap is less than 5\%. By strengthening both the negative and positive off-diagonal terms, \conic provides the best performance with \texttt{rimp} above $90\%$. \conic can solve all instances and most of them are solved within 10 minutes.  Finally, observe that as the diagonal dominance increases, the performance of all formulations improves. Specifically, larger diagonal dominance results in more instances solved to optimality, smaller \texttt{egap} and shorter solving time for all formulations. For these instances, on average,
the gap improvement is raised from 50.69\% to 92.90\% by incorporating strengthening from off-diagonal coefficients.

Table \ref{tab:rho} displays the computational results for different values of $\rho$ with fixed $\delta=0.1$. The relative comparison of formulations is similar as discussed before, with \conic resulting in the best performance. As $\rho$ increases, the performance of \conicN deteriorates in terms of \rimp while  the performance of \conicP improves, as expected. The performance of \conic also improves for high values of $\rho$, and always results in significant improvement compared to other formulations for all instances. For these instances, on average,
the gap improvement is raised from 9.77\% to 85.38\% by incorporating strengthening from off-diagonal coefficients.

\begin{landscape}
	\begin{table}[htbp]
		\centering
		\caption{Experiments with varying diagonal dominance, $\rho=0.3$.}
		\setlength{\tabcolsep}{3pt}
		\resizebox{\linewidth}{!}{		
			\begin{tabular}{\resetRow ll^l^l^l^l^l^l^l^l^l^l^l^l^l^l^l^l^l}
				\hline
				n     & $\delta$ & \igap  &  \multicolumn{4}{l}{\conicPersp}  &        \multicolumn{4}{l}{\conicN} &  \multicolumn{4}{l}{\conicP} &  \multicolumn{4}{l}{\conic} \\
				\cmidrule(lr){4-7} \cmidrule(lr){8-11}\cmidrule(lr){12-15}\cmidrule(lr){16-19}
				&       &       
				& \rimp  & \nodes & \Time  & \egap 
				& \rimp  & \nodes & \Time  & \egap 
				& \rimp  & \nodes & \Time  & \egap  
				& \rimp  & \nodes & \Time  & \egap\\
				\bottomrule
				
				40 &       0.1 &      53.37 &       9.74 &       9,537 &         46 &    0.00[5] &      23.44 &       3,439 &         44 &    0.00[5] &      66.07 &        526 &         18 &    0.00[5] &      86.93 &         65 &         13 &    0.00[5] \\
				
				&       0.5 &      51.10 &      33.17 &       3,896 &         26 &    0.00[5] &      47.86 &       1,335 &         18 &    0.00[5] &      79.48 &        198 &          9 &    0.00[5] &      95.01 &         24 &          9 &    0.00[5] \\
				
				&       1.0 &      52.73 &      60.86 &       1,463 &          9 &    0.00[5] &      74.62 &        375 &          7 &    0.00[5] &      86.83 &        146 &          7 &    0.00[5] &      97.46 &         23 &          8 &    0.00[5] \\
				
				\multicolumn{2}{l}{\bfseries Avg} &\rowstyle{\bfseries}      52.40 &      34.59 &       4,965 &         27 &   0.00[15] &      48.64 &       1,717 &         23 &   0.00[15] &      77.46 &        290 &         11 &   0.00[15] &      93.13 &         37 &         10 &   0.00[15] \\
				
				60 &       0.1 &      46.90 &       9.05 &     316,000 &       3,363 &    5.53[1] &      19.07 &     135,052 &       3,261 &    3.83[2] &      76.78 &       4,898 &        498 &    0.00[5] &      89.72 &        445 &        140 &    0.00[5] \\
				
				&       0.5 &      50.97 &      38.46 &     134,542 &       1,888 &    2.65[3] &      49.94 &      55,434 &       1,321 &    0.98[4] &      82.72 &       1,652 &        267 &    0.00[5] &      95.13 &        203 &         75 &    0.00[5] \\
				
				&       1.0 &      47.22 &      60.04 &      21,440 &        317 &    0.00[5] &      66.52 &       8,579 &        209 &    0.00[5] &      94.69 &         86 &         35 &    0.00[5] &      98.69 &         17 &         22 &    0.00[5] \\
				
				\multicolumn{2}{l}{\bfseries Avg}  &\rowstyle{\bfseries}      48.36 &      35.85 &     157,328 &       1,856 &    2.73[9] &      45.18 &      66,355 &       1,597 &   1.60[11] &      84.73 &       2,212 &        267 &   0.00[15] &      94.51 &        222 &         79 &   0.00[15] \\
				
				80 &       0.1 &      49.91 &       4.76 &     155,000 &       3,600 &   20.25[0] &      21.96 &      69,609 &       3,600 &   14.38[0] &      65.11 &       8,017 &       2,742 &    4.69[2] &      83.33 &       2,142 &       1,416 &    0.00[5] \\
				
				&       0.5 &      50.53 &      37.33 &     136,638 &       3,600 &   12.06[0] &      49.16 &      63,897 &       3,600 &    7.49[0] &      81.57 &       6,525 &       2,473 &    1.70[2] &      94.21 &        341 &        261 &    0.00[5] \\
				
				&       1.0 &      53.78 &      56.96 &     152,704 &       3,600 &    7.41[0] &      69.41 &      45,388 &       3,068 &    2.95[2] &      84.42 &       5,870 &       2,116 &    1.27[3] &      95.67 &        365 &        275 &    0.00[5] \\
				
				\multicolumn{2}{l}{\bfseries Avg} &\rowstyle{\bfseries}      51.41 &      33.02 &     148,114 &       3,600 &   13.24[0] &      46.84 &      59,632 &       3,423 &    8.27[2] &      77.03 &       6,804 &       2,443 &    2.55[7] &      91.07 &        950 &        651 &   0.00[15] \\\hline
		\end{tabular}}%
		\label{tab:delta}%
	\end{table}

\begin{table}[htbp]
	\centering
	\caption{Experiments with varying positive off-diagonal entries, $\delta=0.1$.}
			\setlength{\tabcolsep}{3pt}
	\resizebox{\linewidth}{!}{
		\begin{tabular}{\resetRow ll^l^l^l^l^l^l^l^l^l^l^l^l^l^l^l^l^l}
			\hline
			n     & $\rho$ & \igap  &  \multicolumn{4}{l}{\conicPersp}  &        \multicolumn{4}{l}{\conicN} &  \multicolumn{4}{l}{\conicP} &  \multicolumn{4}{l}{\conic} \\
			\cmidrule(lr){4-7} \cmidrule(lr){8-11}\cmidrule(lr){12-15}\cmidrule(lr){16-19}
			&       &       
			& \rimp  & \nodes & \Time  & \egap 
			& \rimp  & \nodes & \Time  & \egap 
			& \rimp  & \nodes & \Time  & \egap  
			& \rimp  & \nodes & \Time  & \egap\\
			\bottomrule
			
			40 &        0.1 &      60.67 &       9.88 &       6,928 &         36 &    0.00[5] &      23.11 &       1,869 &         22 &    0.00[5] &      50.68 &       1,134 &         34 &    0.00[5] &      72.23 &        158 &         19 &    0.00[5] \\
			
			&        0.5 &      47.67 &        8.7 &       8,572 &         46 &    0.00[5] &      21.67 &       3,181 &         41 &    0.00[5] &      75.82 &        272 &         12 &    0.00[5] &      91.78 &         53 &         11 &    0.00[5] \\
			
			&          1.0 &      43.23 &      10.05 &       8529 &         44 &    0.00[5] &      18.08 &       4,903 &         60 &    0.00[5] &      82.33 &        149 &          8 &    0.00[5] &      92.56 &         51 &         11 &    0.00[5] \\
			
			\multicolumn{2}{l}{\bfseries Avg}  &\rowstyle{\bfseries}      50.52 &       9.54 &       8,010 &         42 &   0.00[15] &      20.95 &       3,317 &         41 &   0.00[15] &      69.61 &        519 &         18 &   0.00[15] &      85.53 &         87 &         14 &   0.00[15] \\
			
			60 &        0.1 &      60.26 &       10.7 &     256,480 &       2,585 &    4.03[2] &      27.85 &      34,563 &        847 &    0.00[5] &      53.37 &      16,190 &       1,983 &    3.10[3] &       78.5 &       1,016 &        264 &    0.00[5] \\
			
			&        0.5 &      45.98 &       9.38 &     319,534 &       3,230 &    5.57[1] &      19.21 &     103,869 &       3,043 &    4.24[2] &      78.22 &       2,715 &        315 &    0.00[5] &      91.09 &        259 &        107 &    0.00[5] \\
			
			&          1.0 &      40.87 &      10.09 &     197,140 &       3,258 &    4.52[2] &      15.93 &      98,289 &       2,982 &    4.34[2] &      85.23 &        564 &        100 &    0.00[5] &      91.66 &        135 &         72 &    0.00[5] \\
			
			\multicolumn{2}{l}{\bfseries Avg} &\rowstyle{\bfseries}      49.03 &      10.06 &     257,718 &       3,024 &    4.71[5] &         21 &      78,907 &       2,291 &    2.86[9] &      72.27 &       6,490 &        799 &   1.03[13] &      87.08 &        470 &        148 &   0.00[15] \\
			
			80 &        0.1 &      64.85 &       9.88 &     142,299 &       3,600 &   24.78[0] &      26.42 &      60,081 &       3,600 &   14.81[0] &      46.63 &      11,367 &       3,172 &   17.40[1] &       69.6 &       4,948 &       2,920 &    6.22[1] \\
			
			&        0.5 &      47.97 &       9.27 &     148,252 &       3,600 &   18.46[0] &      20.75 &      48,887 &       3,600 &   15.70[0] &       73.3 &       7,245 &       3,019 &    2.72[2] &      89.11 &       1,131 &        827 &    0.00[5] \\
			
			&          1.0 &      41.69 &         10 &     149,563 &       3,600 &   14.79[0] &      16.61 &      52,485 &       3,600 &   14.34[0] &       84.7 &       3,769 &       1,444 &    0.88[4] &      91.93 &       1,068 &        716 &    0.00[5] \\
			
			\multicolumn{2}{l}{\bfseries Avg}  &\rowstyle{\bfseries}      51.51 &       9.72 &     146,705 &       3,600 &   19.34[0] &      21.26 &      53,818 &       3,600 &   14.95[0] &      68.21 &       7,460 &       2,545 &    7.00[7] &      83.54 &       2,382 &       1,487 & 2.07[11]\\\hline
	\end{tabular}}%
	\label{tab:rho}%
\end{table}
\end{landscape}

\subsection{Real instances -- the general case}\label{sec:comp_real}

Now using real stock market data, we consider portfolio index tracking problem of the form
\begin{subequations}\label{eq:index}	
	\begin{align*}
	\min\;&(y-y_B)'Q(y-y_B)\\
\text{(IT)} \ \ \ \ 	\text{s.t. }
	&1'y=1, \ %\label{eq:index_budget}\\
	1'x\le k \\ %\label{eq:index_cardinality}\\
	&0\le y\le x, \ %\label{eq:index_ub}\\
	x\in\{0,1\}^n, %\label{eq:index_int}
	\end{align*}
\end{subequations}
where $y_B\in \R^n$ is a benchmark index portfolio, $Q$ is the covariance matrix of security returns and 
$k$ is the maximum number of securities in the portfolio. 
%We denote $D$ as the continuous relaxation of the feasible region of \eqref{eq:portfolio}.  

\subsubsection{Instance generation} We use the daily stock return data provided by Boris Marjanovic in Kaggle\footnote{https://www.kaggle.com/borismarjanovic/price-volume-data-for-all-us-stocks-etfs} to compute the covariance matrix $Q$. Specifically, given a desired start date (either 1/1/2010 or 1/1/2015 in our computations), we compute the sample covariance matrix based on the stocks with available data in at least 99\% of the days since the start (returns for missing data are set to 0). The resulting covariance matrices are available at \url{https://sites.google.com/usc.edu/gomez/data}. We then generate instances as follows:
\begin{itemize}
	\item we randomly sample an $n\times n$ covariance matrix $Q$ corresponding to $n$ stocks, and
	\item we draw each element of $y_B$ from uniform [0,1], and then scale $y_B$ so that $1'y_B=1$. 
\end{itemize}

\subsubsection{Formulations}
%We consider convex relaxations of \eqref{eq:index}. First, we point out that 
The natural convex relaxation of IT always yields a trivial lower bound of $0$, as it is possible to set $z=y=y_B$. Thus, we do not report results concerning the natural relaxation. Instead, we consider the optimal perspective relaxation \optPersp of
 \cite{dong2015regularization}:
\begin{align}
\min_{x,y,Y}\;& \zeta'Q\zeta-2\zeta'Qy+\innerProd{Q}{Y}\\
\text{s.t.}\;&Y-yy'\succeq 0\\
(\perspS)\qquad\qquad&y_i^2\leq Y_{ii}x_i&\forall i\in[n]\\
&0\le x \le 1, \ y \ge 0 \\
	&1'y=1, \ %\label{eq:index_budget}\\
1'x\le k 
%&\eqref{eq:index_budget}-\eqref{eq:index_ub},
\end{align}
and the proposed \sdppos exploiting off-diagonal elements of $Q$:
\small\begin{align*}
\min_{x,y,Y,W}\;& \zeta'Q\zeta-2\zeta'Qy+\innerProd{Q}{Y}\\
\text{s.t.}\;&Y-yy'\succeq 0\\
& W^{ij}\succeq 0 &\forall i<j\\
&(Y_{ii}-W_{11}^{ij})(x_i-W_{33}^{ij})\ge(y_i-W_{31}^{ij})^2,\;W_{11}^{ij}\le Y_{ii}&\forall i<j\\ %\label{eq:sdpposi}\\
(\conicS)\qquad\qquad&(Y_{jj}-W_{22}^{ij})(x_j-W_{33}^{ij})\ge(y_j-W_{32}^{ij})^2,\;W_{22}^{ij}\le Y_{jj}&\forall i<j \\ %&\forall i< j\label{eq:sdpposj}\\
& W_{33}^{ij}\leq x_i+x_j-1,\;W_{33}^{ij}\le x_i,\;W_{33}^{ij}\le x_j&\forall i<j\\
&0\leq W_{31}^{ij}\leq y_i,\; 0\leq W_{32}^{ij}\leq y_j,\;W_{12}^{ij}=Y_{ij}&\forall i<j\\
&0\le x \le 1, \ y \ge 0 \\
	&1'y=1, \ %\label{eq:index_budget}\\
1'x\le k,
%&\eqref{eq:index_budget}-\eqref{eq:index_ub}.
\end{align*}\normalsize

In addition, for each relaxation, we consider a simple rounding heuristic to obtain feasible solutions to (IT): given an optimal solution $(\bar x,\bar y)$ to the continuous relaxation, we fix $x_i=1$ for the $k$-largest values of $\bar x$ and the remaining $x_i=0$, and resolve the continuous relaxation to compute $y$.  

\subsubsection{Results} Tables~\ref{tab:real2010} and \ref{tab:real2015} present the results using historical data since 2010 and 2015, respectively. They show, for different values of $n$ and $k$, and for each conic relaxation, the time required to solve the instances in seconds, the lower bound (\texttt{LB}) corresponding to the optimal objective value of the continuous relaxation, the upper bound (\texttt{UB}) corresponding to the objective value of the heuristic, and the gap between these two values computed as $\texttt{Gap}=\frac{\texttt{UB}-\texttt{LB}}{\texttt{UB}}$. The lower and upper bounds are scaled so that the best upper bound found for a given instance is $100$. Bound and gap values represent an average of five instances generated with the same parameters, while times are averages of 15 instances across all cardinalities. In addition, Figure~\ref{fig:gaps} reports the distribution of gaps across all instances.

\begin{table}[h]
	\centering
	\caption{Results with stock return data since 2010.}
	%, resulting in 1,979 time epochs to compute the sample covariance matrix. On average, 98\% of the off-diagonal elements of the covariance matrices are nonnegative.}
	\label{tab:real2010}
	\setlength{\tabcolsep}{3pt}
	\scalebox{0.9}{
	\begin{tabular}{c  l c | c c c | c c c |c c c}
		\hline \hline
		\multirow{2}{*}{$n$}&\multirow{2}{*}{\texttt{Method}}&\multirow{2}{*}{\texttt{Time(s)}}& \multicolumn{3}{c|}{$k=.10n$}&\multicolumn{3}{c|}{$k=.15n$}&\multicolumn{3}{c}{$k=.20n$}\\
		&&&\texttt{LB} & \texttt{UB} & \texttt{Gap}&\texttt{LB} & \texttt{UB} & \texttt{Gap}&\texttt{LB} & \texttt{UB} & \texttt{Gap}\\
		\hline
		\multirow{2}{*}{50}&\perspS&1.1&93.6 & 105.3 & 10.9\% & 91.9 & 104.5 & 12.0\% & 89.9 & 103.5 & 13.2\%\\
		&\conicS&3.3&98.6 & 100.0 & 1.4\% & 97.4 & 100.0 & 2.6\% & 95.1 & 102.2 & 6.8\%\\	
		\hline
		&&&&&&&&&&&\\
		\multirow{2}{*}{100}&\perspS&30.8&90.7 & 106.9 & 14.6\% & 90.8 & 108.1 & 15.4\% & 90.5 & 105.9 & 14.5\%\\
		&\conicS&65.7&98.5 & 100.1 & 1.6\% & 99.2 & 100.0 & 0.8\% & 99.0 & 100.0 & 1.0\%\\
		\hline		
		&&&&&&&&&\\
		\multirow{2}{*}{150}&\perspS&230.5&80.7 & 116.8 & 30.3\% & 81.6 & 116.0 & 28.8\% & 78.8 & 119.3 & 32.5\%\\
		&\conicS&465.4&92.6 & 100.3 & 7.7\% & 93.5 & 100.0 & 6.5\% & 90.1 & 100.0 & 9.9\%\\
		\hline \hline
	\end{tabular}
	}
\end{table}

\begin{table}[h]
	\centering
	\caption{Results with stock return data since 2015.}
	%, resulting in 721 time epochs to compute the sample covariance matrix. On average, 93\% of the off-diagonal elements of the covariance matrices are nonnegative.}
	\label{tab:real2015}
	\setlength{\tabcolsep}{3pt}
	\scalebox{0.9}{
	\begin{tabular}{c  l c | c c c | c c c |c c c}
		\hline \hline
		\multirow{2}{*}{$n$}&\multirow{2}{*}{\texttt{Method}}&\multirow{2}{*}{\texttt{Time(s)}}& \multicolumn{3}{c|}{$k=.10n$}&\multicolumn{3}{c|}{$k=.15n$}&\multicolumn{3}{c}{$k=.20n$}\\
		&&&\texttt{LB} & \texttt{UB} & \texttt{Gap}&\texttt{LB} & \texttt{UB} & \texttt{Gap}&\texttt{LB} & \texttt{UB} & \texttt{Gap}\\
		\hline
		\multirow{2}{*}{50}&\perspS&1.2&90.6 & 116.3 & 20.9\% & 91.4 & 116.9 & 20.9\% & 90.3 & 114.1 & 20.2\%\\
		&\conicS&3.5&99.3 & 100.0 & 0.7\% & 99.5 & 100.0 & 0.5\% & 98.9 & 100.0 & 1.1\%\\
		\hline
		&&&&&&&&&&&\\
		\multirow{2}{*}{100}&\perspS&23.6&78.2 & 240.3 & 56.5\% & 76.1 & 264.8 & 66.2\% & 75.9 & 188.2 & 49.6\%\\
		&\conicS&51.3&94.0 & 100.0 & 6.0\% & 91.5 & 100.0 & 8.5\% & 90.6 & 100.0 & 9.4\%\\
		\hline		
		&&&&&&&&&\\
		\multirow{2}{*}{150}&\perspS&177.5&51.8 & 178.2 & 68.0\% & 46.4 & 223.4 & 77.5\% & 48.4 & 155.6 & 64.2\%\\
		&\conicS&352.3&66.2 & 100.0 & 33.8\% & 59.0 & 100.0 & 41.0\% & 61.2 & 100.0 & 38.8\%\\
		\hline \hline
	\end{tabular}
	}
\end{table}

\begin{figure}
	\subfloat[Data since 2010.]{\includegraphics[width=0.5\textwidth,trim={10cm 4cm 10cm 4cm},clip]{./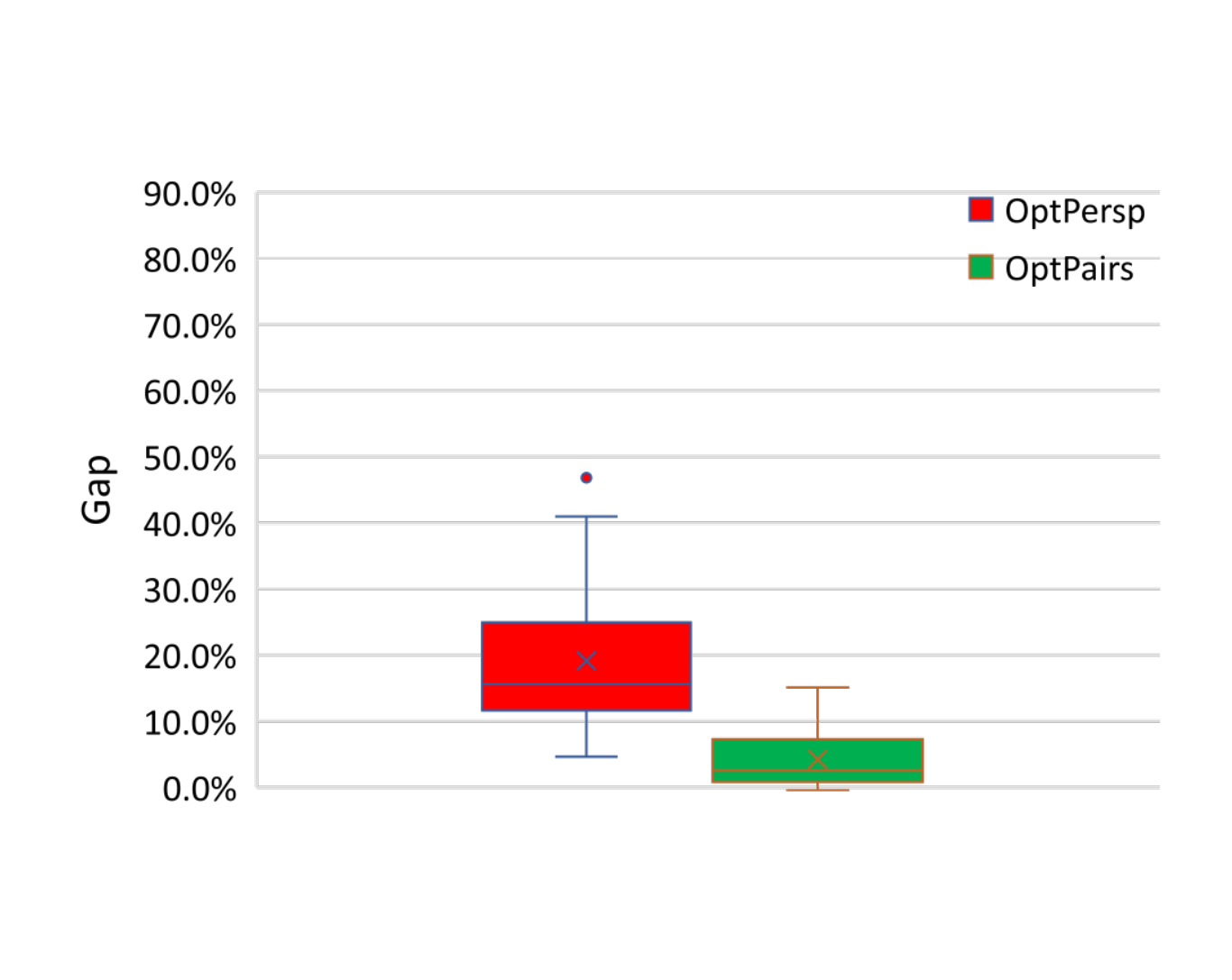}}\hfill
	\subfloat[Data since 2015.]{\includegraphics[width=0.5\textwidth,trim={10cm 4cm 10cm 4cm},clip]{./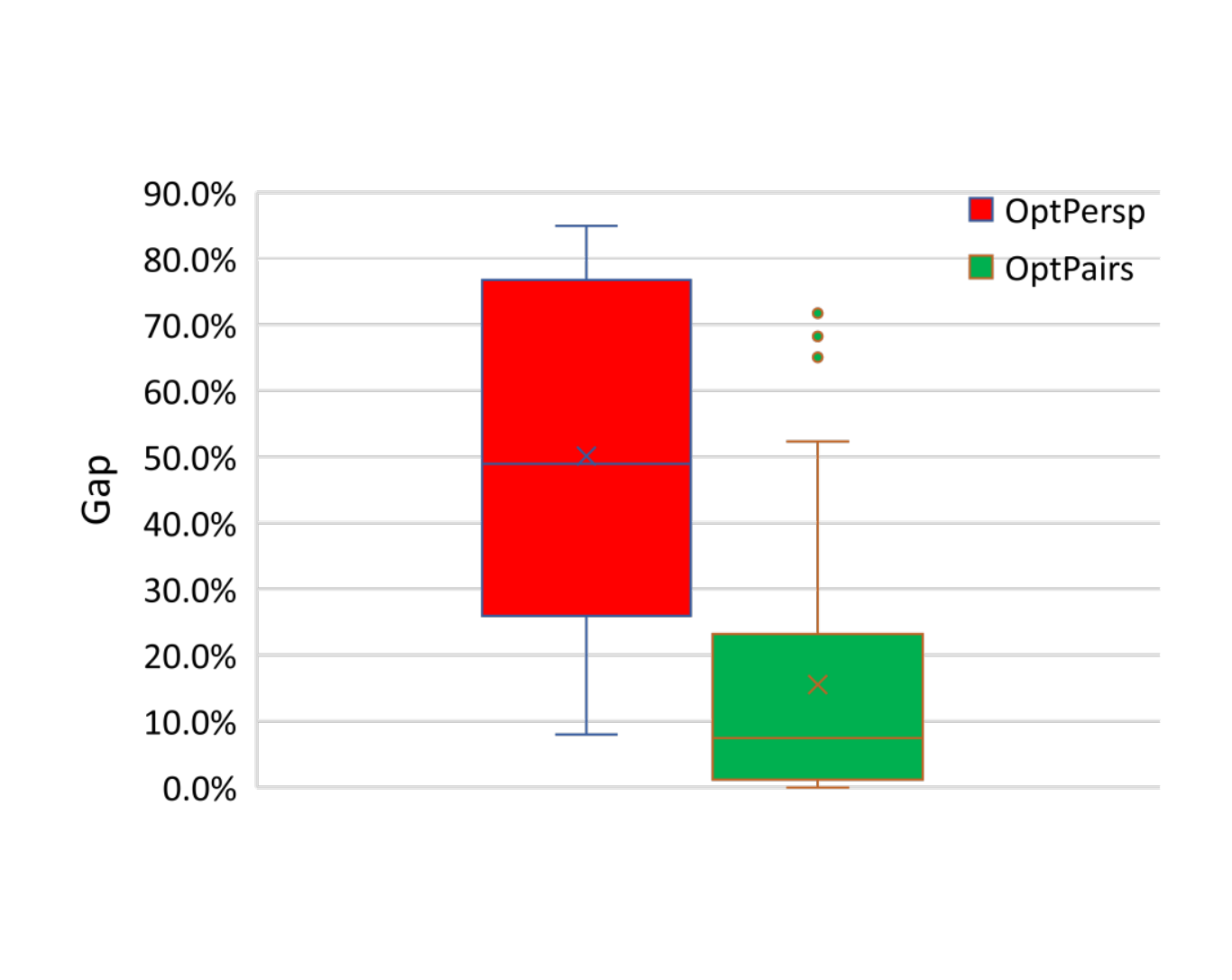}}
	\caption{Distribution of gaps for \perspS\ and \conicS.}
	\label{fig:gaps}
\end{figure}

Observe that \conicS\ consistently delivers higher quality solutions than \perspS\ across all values of $n$ and $k$, both in terms of lower and upper bounds, and leads to significant reduction of the gaps: for data since 2010, \perspS\ yields an average gap of 19.1\%, whereas \conicS\ yields an average gap of 4.2\%; for data since 2015, \perspS\ yields an average gap of 50.1\%, whereas \conicS\ yields an average gap of 15.5\%. The upper bounds obtained from rounding the solution of \perspS\ appear to be especially poor for instances with data since 2015, resulting in values two times larger than those obtained from \conicS. The improved upper bounds suggest that, in addition to delivering improved lower bounds, the feasible solutions obtained from \conicS\ may be closer to optimal solutions. Nonetheless, given the simplicity of the rounding heuristic, we expect that better upper bounds can be found (and lower bounds obtained from either relaxation are closer to the true optimal value than indicated here). 

With respect to solution times, we report the following encouraging results. First, while \conicS requires about two to three times more time than \perspS, this factor does not seem to be affected by the dimension of the problem $n$, and thus both methods scale similarly. Second, the solution times reported here are orders-of-magnitude smaller than those reported in \cite{frangioni2020decompositions} using disjunctive extended formulations to solve decomposition problems (though instances and the computational setup are different): in \cite{frangioni2020decompositions}, the authors report times in the order of $10^5$ seconds to solve optimal decompositions in instances with $n=50$; in contrast, problems with similar size are solved in under four seconds here, % (five orders-of-magnitude improvement), 
and problems with $n=150$ can be solved within minutes to optimality. These results clearly illustrate the benefits of deriving ideal formulations in the original space of variables or tight extended formulations.
% instead of relying on large extended formulations obtained from disjunctive programming. 

\section{Conclusions}\label{sec:conclusions}
In this paper, we first show the equivalence between two well-known convex relaxations -- Shor's SDP and optimal perspective formulation for \CQI. Then we describe the convex hull of the mixed-integer epigraph of the bivariate convex quadratic functions with nonnegative variables and off-diagonals with an SOCP-representable extended formulation as well as in the original space of variables. Furthermore, we develop a new technique for constructing an optimal convex relaxation from elementary valid inequalities. Using this technique, we develop a new strong SDP relaxation for \CQI, based on the convex hull descriptions of the bivariate cases as building blocks. Moreover,
the computational results with synthetic and real portfolio optimization instances indicate that the proposed formulations provide substantial improvement over existing alternatives in the literature.

%In addition to good theoretical properties, the computational results show the proposed formulation is very powerful and general enough to permit additional constraints. When the matrix is diagonal dominant, the derived formulation has a significant root improvement over 90\% and is able to solve most instances with $n=80$ to optimality in about 10 minutes, while the optimal perspective reformulation fail to  solve any of them within one hour. The computational experiments using real stock data also indicate that the novel SDP formulation can close the optimality gap by orders of magnitude compared to the existing methods.

\section*{Acknowledgments}
Andr\'es G\'omez is supported, in part, by grants 1930582 and 1818700 from the National Science Foundation. Alper Atamt\"urk is supported, in part, NSF grant 1807260, DOE ARPA-E grant 260801540061, and DOD ONR grant 12951270.

\linespread{1.0}

\bibliographystyle{apalike}
\bibliography{Bibliography}

\end{document}